\providecommand{\U}[1]{\protect\rule{.1in}{.1in}}
\numberwithin{equation}{section}
\newtheorem{theorem}{Theorem}[section]
\newtheorem{lemma}[theorem]{Lemma}
\newtheorem{remark}[theorem]{Remark}
\newtheorem{definition}[theorem]{Definition}
\def\<{\langle}
\def\>{\rangle}
\def\d{{\rm d}}
\def\P{\mathbb{P}}
\def\R{\mathbb{R}}
\def\T{\mathbb{T}}
\def\Z{\mathbb{Z}}
\begin{document}

\title{Scaling Limits for Stochastic SQG Equations and 2D Inviscid Critical Boussinesq Equations with Transport Noises}

\author{Shuchen Guo\footnote{Email: guoshuchen15@mails.ucas.ac.cn. School of Mathematical Sciences, University of the Chinese Academy of Sciences, Beijing, China and Academy of Mathematics and Systems Science, Chinese Academy of Sciences, Beijing, China.} }

\date{}

\maketitle
\begin{abstract}
We study stochastic SQG equations on the torus $\mathbb{T}^2$ with  multiplicative transport noise in the $L^2$-setting. Under a suitable scaling of the noise, we show that the solutions converge weakly to the unique solution to the deterministic dissipative SQG equation. A similar scaling limit result is proved also for the stochastic 2D inviscid critical Boussinesq equations.
\end{abstract}
\textbf{Keywords:} Surface Quasi-Geostrophic equation, critical Boussinesq equations, transport noise, scaling limit, weak convergence

\section{Introduction}
\hspace{1.5em}Fluid mechanical equations with multiplicative noises of transport type have been considered a long time ago, see e.g. \cite{BCF} and also the paper \cite{MikRoz}. Such noises emerge naturally from stochastic model reduction techniques \cite{MajdaTV, AFP} and are also stressed by variational considerations \cite{Holm, DH}. In the past ten years, there is an increasing interest in considering fluid equations perturbed by transport noises, partly motivated by the theory of regularization by noise, cf. \cite{FGP, FGP2} and more recent papers \cite{BFM, FL-19, BM-19, CL-19} on the vorticity form of stochastic 2D Euler equations with transport noises, showing existence of solutions in various regularity classes. In particular, using the method of point vortex approximation, existence of white noise solutions was proved in \cite{FL-19} when the noise has finite trace. In the case of a noise with infinite trace, it was shown in \cite{FlaLuo-20} that, under a suitable scaling of the noise, stationary white noise solutions to a sequence of stochastic 2D Euler equations converge weakly to the unique stationary solution to the 2D Navier-Stokes equations driven by space-time white noise, a model which has been studied many years ago, see e.g. \cite{AC, AF, DaPD}. Remark that the original equation is hyperbolic in nature, while the limit equation is dissipative; furthermore, the uniqueness of solutions to the approximating equations remains an open problem, but it holds true for the limit equation, and thus it is reasonable to say that transport noise asymptotically regularizes 2D Euler equations.

It turns out that such scaling limit is more relevant to solutions in $L^2$-spaces. For instance, linear transport equation perturbed by transport noise was considered in \cite{Gal}, where the limit equation is a second order deterministic parabolic equation. Later on, similar results are proved for stochastic 2D Euler, mSQG and inviscid Boussinesq equations with transport noises, see \cite{FGL, LS, Luo20}. This idea has also been applied to some dissipative systems, showing the phenomenon of dissipation enhancement, see \cite{FL-19b} for the vorticity form of 3D Navier-Stokes equations and \cite{FGL20} for some general nonlinear equations whose solutions might explode for initial data above some threshold.

Inspired by the above-mentioned works, we consider in the present paper the scaling limits for stochastic SQG equations and 2D critical inviscid Boussinesq equations. We remark that, since stream functions in these models have the same regularity as the velocity fields, the proof of convergence of nonlinear terms is more difficult and we shall appeal to a commutator estimate as in \cite{Resnick}.

Before going to the details, we introduce a few notations that will be used below. Let $\T^2= \R^2/\Z^2$ be the 2D torus and $\Z^2_0$ the nonzero lattice points; $\ell^2= \ell^2(\Z^2_0)$ is the usual space of square summable real sequences indexed by $\Z^2_0$.  $\{\sigma_k\}_{k\in\Z^2_0}$ is a CONS of the space of divergence-free vector fields on $\T^2$ with zero mean (see Section 2 for a precise choice); $L^2(\T^2)$ and Sobolev space $H^s(\T^2)$ $(s\in\R)$ are assumed to consists of functions with zero mean. Finally, $\{B^k\}_{k\in \Z^2_0}$ is a family of independent standard Brownian motions defined on some filtered probability space $(\Omega, \mathcal F, (\mathcal F_t)_{t\geq 0}, \P)$.

\subsection{Stochastic SQG Equations}\label{subs-intro-sqg}
\hspace{1.5em}The deterministic SQG equation describes the temperature in a rapidly rotating stratified fluid with uniform potential vorticity, with applications in oceanic and meteorologic flows. This equation is important in mathematics due to its structural similarity with the 3D Euler equations, see \cite[Section 2]{CMT}. We refer to the introduction of \cite{FS} for a quite complete account of well posedness results for the SQG equation.  On the 2D torus $\T^2$, it reads as
  $$\left\{\aligned & \partial_t \omega + u\cdot\nabla\omega =0, \\
  &u = \nabla^\perp (-\Lambda^{-1} \omega),
  \endaligned \right.$$
where $\nabla^\perp=(\partial_2, -\partial_1)$ and $\partial_i = \partial_{x_i}$; $\Lambda =(-\Delta)^{1/2}$ and $\Delta$ is the periodic Laplacian on $\T^2$. To simplify the notation, we write $u = K_0 \ast \omega$ where $K_0$ is the kernel corresponding to the operator $-\nabla^\perp \Lambda^{-1}$; it is known that $K_0$ has the asymptotic behavior
  $$K_0(x) \sim \frac{x^\perp}{|x|^{3}} \quad \mbox{for } |x|\to 0.$$

We take a $\theta\in \ell^2$ which is symmetric, i.e.
  \begin{equation}\label{symmetry}
  \theta_k = \theta_l \quad \mbox{for all } k,l\in\Z^2_0 \mbox{ with } |k|=|l|.
  \end{equation}
Consider the following stochastic SQG equation with transport noise:
  \begin{align*}
  	d\omega + u\cdot\nabla\omega \,\d t = \frac{2\sqrt{\nu}}{\|\theta \|_{\ell^2}} \sum_{k} \theta_k \sigma_k \cdot\nabla\omega\circ \d B^k_t,
  \end{align*}
where $u= K_0 \ast \omega$ and $\nu>0$ stands for the noise intensity. Using relation \eqref{tensor} below, one can show that this equation has the It\^o formulation:
  \begin{align}\label{ito}
  	\d\omega + u\cdot\nabla\omega \,\d t = \nu \Delta \omega\,\d t + \frac{2\sqrt{\nu}}{\|\theta \|_{\ell^2}} \sum_{k} \theta_k \sigma_k \cdot\nabla\omega \d B^k_t.
  \end{align}
For $\omega_0\in L^2(\T^2)$, it is not difficult to show the existence of a weak solution $\omega$ with trajectories in $L^\infty(0,T; L^2(\T^2))$, see Theorem \ref{main}.

Next, we take a sequence $\{\theta^N\}_{N\geq 1} \subset \ell^2$, each satisfying the symmetry property \eqref{symmetry}, and consider the following stochastic SQG equations:
  \begin{equation}\label{SSQG}
  \d\omega^N + u^N\cdot\nabla\omega^N \,\d t = \frac{2\sqrt{\nu}}{\|\theta^N \|_{\ell^2}} \sum_{k} \theta^N_k \sigma_k \cdot\nabla\omega^N\circ \d B^k_t,
  \end{equation}
where $u^N= K_0 \ast \omega^N$. These equations are also understood as above in the It\^o sense. In order to prove a scaling limit for these solutions, we need the following condition:
  \begin{equation}\label{theta-N}
  \lim_{N\to \infty} \frac{\|\theta^N \|_{\ell^\infty}}{\|\theta^N \|_{\ell^2}} =0,
  \end{equation}
which holds for many different choices of $\{\theta^N\}_{N\geq 1}$. Our first main result is:
\begin{theorem} \label{scalingtheoremSQG}
	Assume (\ref{theta-N}) holds and $\{\omega_0^N\}_{N\geq1}$ converges weakly to some $\omega_0\in L^2(\mathbb{T}^2)$.
	Then, the weak solutions to equation (\ref{SSQG}) converge weakly as $N\rightarrow\infty$ to the unique weak solution to the dissipative SQG equation:
	\begin{equation}\label{dissipativeSQG}
		\left\{\begin{array}{l}
			\partial_{t} \omega+u \cdot \nabla \omega=\nu \Delta \omega, \\
			u=K_{0} * \omega, \quad \omega_{0} \in L^{2}\left(\mathbb{T}^{2}\right).
		\end{array}\right.
	\end{equation}
\end{theorem}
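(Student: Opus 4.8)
The plan is to use the standard tightness--identification--uniqueness scheme. First I would record a uniform a priori bound. Applying Itô's formula to $\|\omega^N\|_{L^2}^2$ in the It\^o formulation \eqref{SSQG}, I would use that both the drift velocity $u^N=K_0\ast\omega^N$ and each $\sigma_k$ are divergence free, so the two transport terms contribute nothing to the energy; the dissipation $\nu\Delta\omega^N$ produced by the It\^o--Stratonovich correction is cancelled exactly by the quadratic variation of the noise, thanks to the symmetry \eqref{symmetry} and relation \eqref{tensor}. Hence the $L^2$ norm is conserved, $\|\omega^N(t)\|_{L^2}=\|\omega_0^N\|_{L^2}$, and since $\omega_0^N\rightharpoonup\omega_0$ in $L^2$ the family $\{\omega_0^N\}$ is bounded, so $\{\omega^N\}$ is bounded in $L^\infty(0,T;L^2(\T^2))$ uniformly in $N$.

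Next I would prove tightness of the laws of $\{\omega^N\}$ on $C([0,T];H^{-s}(\T^2))$. Testing \eqref{SSQG} against a smooth $\phi$ gives $\langle\omega^N(t),\phi\rangle=\langle\omega_0^N,\phi\rangle+\int_0^t(\langle\omega^N,u^N\cdot\nabla\phi\rangle+\nu\langle\omega^N,\Delta\phi\rangle)\,\d r+M_t^N(\phi)$. Since $\nabla^\perp\Lambda^{-1}$ is of order zero, $\|u^N\|_{L^2}\lesssim\|\omega^N\|_{L^2}$, so $u^N\omega^N$ is bounded in $L^1$ and the drift is Lipschitz in time with values in $H^{-s}$ for $s>2$; a Burkholder--Davis--Gundy estimate together with $\|\sigma_k\cdot\nabla\omega^N\|_{H^{-1}}\lesssim\|\omega^N\|_{L^2}$ and $\tfrac{1}{\|\theta^N\|_{\ell^2}^2}\sum_k(\theta^N_k)^2=1$ yields a uniform $1/2$-H\"older bound for $M^N(\phi)$ in $L^2(\Omega;H^{-s})$. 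Interpolating these time-regularity estimates against the uniform $L^\infty L^2$ bound and using that the $L^2$-ball is compact in $H^{-s}$, the Aubin--Lions/stochastic compactness criterion gives tightness. I would then apply the Skorokhod representation theorem to obtain, on a new probability space, a subsequence $\tilde\omega^N\to\tilde\omega$ almost surely in $C([0,T];H^{-s})$, with $\tilde\omega^N$ still bounded in $L^\infty L^2$; in particular $\tilde\omega^N(t)\rightharpoonup\tilde\omega(t)$ weakly in $L^2$ for each $t$.

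The key step is passing to the limit in the weak formulation. For the initial datum and the linear term $\nu\langle\omega^N,\Delta\phi\rangle$ this is immediate from weak $L^2$ convergence. The martingale disappears: its quadratic variation tested against $\phi$ is bounded, via Bessel's inequality for the CONS $\{\sigma_k\}$, by $\tfrac{4\nu\|\theta^N\|_{\ell^\infty}^2}{\|\theta^N\|_{\ell^2}^2}\|\nabla\phi\|_\infty^2\int_0^T\|\omega^N\|_{L^2}^2\,\d r$, which tends to zero by the scaling assumption \eqref{theta-N}; thus the limit equation is deterministic. The genuine difficulty is the nonlinear term $\langle\omega^N,u^N\cdot\nabla\phi\rangle$, which is quadratic in $\omega^N$ while only weak $L^2$ convergence is available. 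Here I would exploit the commutator structure of the SQG nonlinearity as in \cite{Resnick}: the quadratic form $\omega\mapsto\langle\omega,(\nabla^\perp\Lambda^{-1}\omega)\cdot\nabla\phi\rangle$ sees only the symmetric part of the order-zero operator $\omega\mapsto(\nabla^\perp\Lambda^{-1}\omega)\cdot\nabla\phi$, whose principal symbol $i\,\nabla\phi\cdot\xi^\perp/|\xi|$ is odd and purely imaginary, hence anti-symmetric; the symmetric part is therefore a commutator of order $-1$ and compact on $L^2$. A compact quadratic form is sequentially weakly continuous, so $\langle\tilde\omega^N(t),u^N(t)\cdot\nabla\phi\rangle\to\langle\tilde\omega(t),u(t)\cdot\nabla\phi\rangle$ pointwise in $t$, and the uniform $L^2$ bound lets dominated convergence handle the time integral. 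This commutator argument is the main obstacle and the place where the $L^2$ framework is essential.

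Finally, the limit $\tilde\omega$ is a weak $L^2$ solution of the dissipative SQG equation \eqref{dissipativeSQG}. Since that equation is well posed in $L^2(\T^2)$, admitting a unique weak solution for each $L^2$ datum, the limit is deterministic and independent of the chosen subsequence. Therefore the whole sequence of laws converges weakly to the Dirac mass at the unique solution $\omega$, which is exactly the asserted weak convergence; the limit being deterministic, the convergence in fact holds in probability.
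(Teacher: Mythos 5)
Your proposal is correct and follows the same overall scheme as the paper's proof: uniform $L^\infty(0,T;L^2)$ bound, tightness of the laws on $C([0,T];H^{-\delta})$ via fourth-moment time-increment estimates in a negative Sobolev space, Skorokhod representation, vanishing of the martingale part by the It\^o isometry combined with condition (\ref{theta-N}), identification of the nonlinear limit through the commutator structure of the SQG nonlinearity, and finally uniqueness of the limiting dissipative equation to upgrade subsequential to full convergence. The one step where you genuinely diverge is the nonlinear term. The paper uses the explicit identity $\int_{\T^2}\omega\, u\cdot\nabla\phi\,\d x=\frac12\int_{\T^2}u\cdot[\Lambda,\nabla\phi]\psi\,\d x$ with $\psi=-\Lambda^{-1}\omega$, splits the difference of the $N_i$-th and limiting terms into three pieces, and closes them using the commutator estimate (\ref{commutatorestimate}) together with weak convergence of $\tilde u^{N_i}$ in $L^2(0,T;L^2)$ and strong convergence of $\tilde\psi^{N_i}$ in $C([0,T];H^{1-\delta})$. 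You instead note that the quadratic form only sees the symmetrization of the order-zero operator $\omega\mapsto(K_0*\omega)\cdot\nabla\phi$, whose principal symbol cancels, so the symmetric part is of order $-1$, hence compact on $L^2$, and the form is sequentially weakly continuous; pointwise-in-time convergence plus dominated convergence then concludes. Both arguments rest on exactly the same cancellation (Resnick's observation); yours is more abstract and avoids the trilinear splitting, while the paper's is quantitative --- the bound $\|[\Lambda,\nabla\phi]\psi\|_{L^2}\le C_\epsilon\|\phi\|_{H^{3+\epsilon}}\|\psi\|_{L^2}$ is precisely what makes your compactness claim explicit. Two small caveats: the exact conservation $\|\omega^N(t)\|_{L^2}=\|\omega_0^N\|_{L^2}$ is not justified at the level of weak solutions (the paper only establishes the inequality $\le$, which is all that is needed); and the uniqueness of the limit equation, which you cite as known, is proved in the paper in Section 3.3 by an $H^{1/2}$-interpolation and Gronwall argument, so it should either be reproduced or given a precise reference.
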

We remark that so far the uniqueness of the deterministic SQG equations remains an open problem. The above theorem tells us that stochastic SQG equations \eqref{SSQG} with transport noise converge in a scaling limit to \eqref{dissipativeSQG} which is uniquely solvable. We shall slightly strengthen this assertion in Theorem \ref{SQGconvergenceinProb} below. Consequently, one can show that the distance between the probability laws of any two weak solutions to \eqref{SSQG} vanishes as $N$ to $\infty$. In this sense, it is reasonable to say that weak solutions to stochastic SQG equations \eqref{SSQG} are approximately unique; see Section 6.1 in \cite{FGL} for more detailed discussions in the setting of stochastic Euler equations.

\subsection{Stochastic 2D Critical Boussinesq Equations}
\hspace{1.5em}Our next result is concerned with the 2D inviscid Boussinesq equations describing the evolution of an incompressible fluid, subject to a vertical force which is proportional to some scalar field, such as the temperature. This part is motivated by \cite{Luo20}, where one can find some references on various well posedness results on the equations. In the current paper, we consider the critical case as in Subsection \ref{subs-intro-sqg}. Written in vorticity form, the deterministic Boussinesq equations with thermal diffusion on torus $\T^2$ reads as
$$\left\{  \aligned
&\partial_t \xi + u\cdot \nabla\xi = \kappa \Delta\xi, \\
&\partial_t \omega + u\cdot \nabla\omega = \partial_1 \xi,
\endaligned \right.$$
where $\xi$ is a scalar field representing the temperature, $u$ and $\omega$ are the velocity and vorticity fields of the fluid, related to one another by $u= K_0\ast \omega$ with the kernel $K_0$ as in the last subsection, $\kappa>0$ is the thermal diffusivity.

We take a $\theta\in\ell^2$ which fulfills \eqref{symmetry}, and consider the following stochastic critical Boussinesq equations with transport noise (fix $\nu>0$):
\begin{equation*}
	\left\{\begin{array}{l}   \displaystyle
		\mathrm{d} \xi+u \cdot \nabla \xi \mathrm{d} t=\kappa \Delta \xi \mathrm{d} t+\frac{2 \sqrt{\nu}}{\|\theta\|_{\ell^{2}}} \sum_{k} \theta_{k}\sigma_{k}\cdot \nabla \xi \circ \mathrm{d} B_{t}^{k}, \\
		\displaystyle \d \omega+u \cdot \nabla \omega \mathrm{d} t=\partial_{1} \xi \mathrm{d} t+\frac{2 \sqrt{\nu}}{\|\theta\|_{\ell^{2}}} \sum_{k} \theta_{k}\sigma_{k}  \cdot \nabla \omega \circ \mathrm{d} B_{t}^{k},
	\end{array}\right.
\end{equation*}
where $u=K_0 * \omega$ and $(\xi_{0},\omega_{0}) \in \left(L^{2}(\mathbb{T}^{2})\right)^2$. In the same way as stochastic SQG equation, it also can be reformulated into It\^{o} form by using \eqref{tensor} below:
\begin{equation}\label{itoboussinesq}
	\left\{\begin{array}{l}
		\displaystyle	\mathrm{d} \xi+u \cdot \nabla \xi \mathrm{d} t=(\kappa+\nu) \Delta \xi \mathrm{d} t+\frac{2 \sqrt{\nu}}{\left\|\theta\right\|_{\ell^{2}}}\sum_{k} \theta_{k}\sigma_{k}\cdot \nabla \xi \mathrm{d} B_{t}^{k}, \\
		\displaystyle  \d \omega+u \cdot \nabla \omega \mathrm{d} t=(\partial_{1} \xi +\nu\Delta\omega)\mathrm{d} t+\frac{2 \sqrt{\nu}}{\left\|\theta\right\|_{\ell^{2}}} \sum_{k} \theta_{k}\sigma_{k}  \cdot \nabla \omega \mathrm{d} B_{t}^{k}.
	\end{array}\right.
\end{equation} 
The existence of a weak solutions to (\ref{itoboussinesq}) will be shown in Theorem \ref{existenceBoussinesq}.

Take $\{\theta^N\}_{N\geq 1} \subset \ell^2$, each $\theta^N$ satisfying \eqref{symmetry}, we consider a family of stochastic critical Boussinesq equations:
\begin{equation}\label{stratonovichboussinesq}  
	\left\{\begin{array}{l}   \displaystyle
		\mathrm{d} \xi^N+u^N \cdot \nabla \xi^N \mathrm{d} t=\kappa \Delta \xi^N \mathrm{d} t+\frac{2 \sqrt{\nu}}{\|\theta^N\|_{\ell^{2}}} \sum_{k} \theta^N_{k}\sigma_{k}\cdot \nabla \xi^N \circ \mathrm{d} B_{t}^{k}, \\
		\displaystyle \d \omega^N+u^N \cdot \nabla \omega^N \mathrm{d} t=\partial_{1} \xi^N \mathrm{d} t+\frac{2 \sqrt{\nu}}{\|\theta^N\|_{\ell^{2}}} \sum_{k} \theta^N_{k}\sigma_{k}  \cdot \nabla \omega^N \circ \mathrm{d} B_{t}^{k},
	\end{array}\right.
\end{equation}
where $u^N=K_0 * \omega^N$ and $(\xi^N_{0},\omega^N_{0}) \in \left(L^{2}(\mathbb{T}^{2})\right)^2$. %Let us reformulate \eqref{stratonovichboussinesq} into It\^{o} form as:
%\begin{equation}\label{itoboussinesqN}
%	\left\{\begin{array}{l}  \displaystyle
%		\mathrm{d} \xi^{N}+u^{N} \cdot \nabla \xi^{N} \mathrm{~d} t=(\kappa+\nu) \Delta \xi^{N} \mathrm{~d} t+\frac{2 \sqrt{\nu}}{\left\|\theta^{N}\right\|_{\ell^{2}}} \sum_{k} \theta_{k}^{N} \sigma_{k} \cdot \nabla \xi^{N} \mathrm{~d} B_{t}^{k} \\ \displaystyle
%		\mathrm{~d} \omega^{N}+u^{N} \cdot \nabla \omega^{N} \mathrm{~d} t=\left(\partial_{1} \xi^{N}+\nu \Delta \omega^{N}\right) \mathrm{d} t+\frac{2 \sqrt{\nu}}{\left\|\theta^{N}\right\|_{\ell^{2}}} \sum_{k} \theta_{k}^{N} \sigma_{k} \cdot \nabla \omega^{N} \mathrm{~d} B_{t}^{k}.
%	\end{array}\right.
%\end{equation}
 Our second main result is:

\begin{theorem}\label{scalingresultforBoussinesq}
	Assume (\ref{theta-N}) and $\left(\xi_{0}^{N},\omega_{0}^N\right)$ converge weakly to some $\left(\xi_{0},\omega_{0}\right)\in \left(L^{2}(\mathbb{T}^{2})\right)^2$. Then, the weak solutions to (\ref{stratonovichboussinesq}) converges weakly as $N\rightarrow\infty$ to the unique weak
	solution to the deterministic viscous Boussinesq equations:
	\begin{equation}\label{deterministicviscousBoussinesq}
		\left\{\begin{array}{l}
			\partial_{t} \xi+u \cdot \nabla \xi=(\kappa+\nu) \Delta \xi, \\
			\partial_{t} \omega+u \cdot \nabla \omega=\partial_{1} \xi+\nu \Delta \omega,\\
			u=K_0*\omega,
		\end{array}\right.
	\end{equation}
	with initial data $\left(\xi_{0}, \omega_{0}\right)$. %That is, the stochastic integral parts in (\ref{itoboussinesqN}) vanishes.
\end{theorem}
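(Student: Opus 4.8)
The plan is to follow the same compactness-and-identification scheme that underlies Theorem~\ref{scalingtheoremSQG}, applied to the coupled system \eqref{stratonovichboussinesq} in its It\^o form \eqref{itoboussinesq}, and to exploit the extra structure coming from the thermal dissipation. First I would derive uniform a priori bounds. Applying It\^o's formula to $\|\xi^N\|_{L^2}^2$, the transport term drops out by incompressibility of $u^N$, the martingale part of the squared norm vanishes because $\langle\xi^N,\sigma_k\cdot\nabla\xi^N\rangle=\tfrac12\int_{\T^2}\sigma_k\cdot\nabla(\xi^N)^2=0$ by $\mathrm{div}\,\sigma_k=0$, and the drift $-2(\kappa+\nu)\|\nabla\xi^N\|_{L^2}^2$ combines with the quadratic-variation term $+2\nu\|\nabla\xi^N\|_{L^2}^2$ (computed from \eqref{tensor} and \eqref{symmetry}) to leave the pathwise identity $\|\xi^N(t)\|_{L^2}^2=\|\xi^N_0\|_{L^2}^2-2\kappa\int_0^t\|\nabla\xi^N\|_{L^2}^2\,ds$. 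This yields uniform bounds for $\xi^N$ in $L^\infty(0,T;L^2)$ and in $L^2(0,T;H^1)$. The identical computation for the vorticity exhibits a full cancellation of the dissipative drift $\nu\Delta\omega^N$ against the It\^o correction, so that $\|\omega^N(t)\|_{L^2}^2=\|\omega^N_0\|_{L^2}^2+2\int_0^t\langle\partial_1\xi^N,\omega^N\rangle\,ds$ pathwise; bounding $\langle\partial_1\xi^N,\omega^N\rangle\le\|\nabla\xi^N\|_{L^2}^2+\|\omega^N\|_{L^2}^2$ and using the $\xi^N$-estimate, Gronwall produces a uniform bound on $\sup_{t\le T}\|\omega^N(t)\|_{L^2}^2$. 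Crucially, no net dissipation survives for $\omega^N$ at the level of the energy balance: the dissipation of the limit equation is stored in the noise.

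Next I would establish tightness. Testing \eqref{itoboussinesq} against smooth $\phi$ and combining the uniform bounds with the Burkholder--Davis--Gundy inequality gives fractional-in-time H\"older estimates for $t\mapsto\langle\xi^N(t),\phi\rangle$ and $t\mapsto\langle\omega^N(t),\phi\rangle$, whence tightness of the laws in $C([0,T];H^{-s})$ for some $s>0$, together with the weak-$\ast$ topology of $L^\infty(0,T;L^2)$. For the temperature the extra $L^2(0,T;H^1)$ bound allows an Aubin--Lions argument, giving (along a subsequence) strong convergence of $\xi^N$ in $L^2(0,T;L^2)$. I would then invoke the Skorokhod representation theorem to realize all these convergences almost surely on a new probability space, keeping the labels $(\xi^N,\omega^N)\to(\xi,\omega)$.

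The identification of the limit rests on passing to the limit in the weak formulation. The martingale terms vanish: integrating by parts, using $\mathrm{div}\,\sigma_k=0$ and Bessel's inequality for the CONS $\{\sigma_k\}$, the quadratic variation of $\frac{2\sqrt\nu}{\|\theta^N\|_{\ell^2}}\sum_k\theta^N_k\int_0^t\langle\sigma_k\cdot\nabla\omega^N,\phi\rangle\,dB^k_s$ is bounded by $4\nu\big(\|\theta^N\|_{\ell^\infty}/\|\theta^N\|_{\ell^2}\big)^2\|\nabla\phi\|_{L^\infty}^2\int_0^t\|\omega^N\|_{L^2}^2\,ds$, which tends to $0$ by \eqref{theta-N} (and likewise for the $\xi^N$-noise). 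The diffusive drifts $(\kappa+\nu)\langle\xi^N,\Delta\phi\rangle$ and $\nu\langle\omega^N,\Delta\phi\rangle$, the coupling $\langle\partial_1\xi^N,\phi\rangle=-\langle\xi^N,\partial_1\phi\rangle$, and the transport term of the $\xi$-equation, rewritten as $-\langle\xi^N(K_0\ast\omega^N),\nabla\phi\rangle$ (a product of a strongly convergent factor $\xi^N$ and a weakly convergent factor $K_0\ast\omega^N$), all pass to the limit routinely. The one genuinely delicate point is the vorticity transport term $-\int_0^t\langle\omega^N(K_0\ast\omega^N),\nabla\phi\rangle\,ds$, in which both factors converge only weakly in $L^2$ and $K_0\ast$ has order zero, so no compactness is gained from the Biot--Savart map. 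This is the main obstacle. I would overcome it by a commutator/symmetrization argument in the spirit of \cite{Resnick}: using the oddness of the kernel $K_0$, one rewrites the trilinear form as a double integral whose effective kernel carries a difference $\nabla\phi(x)-\nabla\phi(y)$ that tames the $|x-y|^{-2}$ singularity, rendering the form continuous under the available convergence (strong in $C([0,T];H^{-s})$, bounded in $L^\infty(0,T;L^2)$).

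Finally, the limit $(\xi,\omega)$ is a weak solution of the deterministic system \eqref{deterministicviscousBoussinesq}. Since the vorticity equation now carries the genuine dissipation $\nu\Delta\omega$ while $u=K_0\ast\omega$ has the same order as $\omega$, this is a subcritical dissipative system, and uniqueness follows from a standard energy estimate on the difference of two solutions (the $\xi$-difference absorbed by the $(\kappa+\nu)$-dissipation and the $\omega$-difference by the $\nu$-dissipation, controlling the bilinear terms). Uniqueness of the limit upgrades the subsequential convergence to convergence of the whole sequence, and because the limit is deterministic the weak convergence of laws is in fact convergence in probability, which completes the proof.
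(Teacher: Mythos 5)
Your proposal follows essentially the same route as the paper: pathwise energy estimates exploiting the Stratonovich--It\^o cancellation, tightness via the Simon compactness results plus Skorokhod representation, vanishing of the martingale terms under \eqref{theta-N}, strong $L^{2}(0,T;L^{2})$ convergence of the temperature to handle its transport term, a Resnick-type commutator argument for the vorticity transport term, and uniqueness of the limit system to upgrade subsequential convergence to convergence of the whole sequence. Your symmetrized-kernel treatment of the delicate trilinear term is just the kernel-side expression of the paper's commutator identity $\int_{\T^2}\omega\, u\cdot\nabla\phi\,\d x=\frac12\int_{\T^2}u\cdot[\Lambda,\nabla\phi]\psi\,\d x$ combined with the estimate \eqref{commutatorestimate} and the splitting into a weak-against-fixed pairing plus commutator-controlled remainders, so the two arguments coincide.
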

Although the uniqueness of the deterministic critical Boussinesq equations is still open, the above theorem shows that stochastic critical Boussinesq equations \eqref{stratonovichboussinesq} with transport noise converge to a uniquely solvable equation \eqref{deterministicviscousBoussinesq} in a scaling limit.  
We explain this assertion further by Theorem \ref{BoussinesqconvergenceinProb}, which deducing that the distance between laws of any two weak solutions to \eqref{stratonovichboussinesq} tends to zero as $N$ to $\infty$. Similar to the stochastic SQG equations, we say that weak solutions to stochastic Boussinesq equations \eqref{stratonovichboussinesq} are approximately unique.

The paper is organized as follows. In section 2, we introduce some notations and lemmas. We first prove the existence of solution to stochastic SQG equation in subection 3.1; and then we prove the scaling limit result (Theorem \ref{scalingtheoremSQG}) in subsection 3.2; the last subsection is devoted to proving the uniqueness of the limit equation \eqref{dissipativeSQG}.  Similarly to section 3, we prove in section 4 the scaling limit result for stochastic Boussinesq equations (Theorem \ref{scalingresultforBoussinesq}).

\section{Preliminaries}
\hspace{1.5em}Here we summarize some definitions and lemmas which appear many times in the main text.
Firstly, we define a CONS $\left\{\sigma_{k}\right\}_{k \in \mathbb{Z}_{0}^{2}}$ of the space of square integrable and divergence-free vector fields on torus $\mathbb{T}^{2}$ as
\begin{align}\label{sigma}
	\sigma_{k}(x)=\frac{k^{\perp}}{|k|} e_{k}(x), \quad x \in \mathbb{T}^{2}, \quad k \in \mathbb{Z}_{0}^{2}:=\mathbb{Z}^{2}\backslash\{0\},
\end{align}
where $k^{\perp}=(k_2,-k_1)$ and
\begin{align}\label{e}
	e_{k}(x)=\sqrt{2}\left\{\begin{array}{ll}
		\cos (2 \pi k \cdot x), & k \in \mathbb{Z}_{+}^{2}, \\
		\sin (2 \pi k \cdot x), & k \in \mathbb{Z}_{-}^{2}.
	\end{array}\right.
\end{align}
The gradient and Laplacian of $e_k$ are as followed,
\begin{align}\label{factofbasis}
	\nabla e_{k}=2 \pi k e_{-k},\quad \Delta e_{k}=-4 \pi^{2}|k|^{2} e_{k}.
\end{align}
For any sequence $\theta:=\left\{\theta_{k}\right\}_{k \in \mathbb{Z}_{0}^{2}} \in \ell^{2}$ satisfying (\ref{symmetry}), the following identity (see Lemma 2.6 in \cite{FlaLuo-20}) holds:
\begin{align}\label{tensor}
	\sum_{k \in \mathbb{Z}_{0}^{2}} \theta_{k}^{2} \sigma_{k}(x) \otimes \sigma_{k}(x) \equiv \frac{1}{2}\|\theta\|_{\ell^{2}}^{2} I_{2}, \quad x \in \mathbb{T}^{2},
\end{align}
where $I_2$ is the two dimensional identity matrix.

Next, we recall some compactness results proved by J.Simon in \cite{simon1986compact}. The assertion (i) below follows Corollary 9 in his paper, while assertion (ii) is a direct consequence of Corollary 5.
\begin{theorem}\label{embedding}
	(i)	For any $\delta\in(0,1)$, let $\beta>5$, if $p>12(\beta-\delta)$, then
	\begin{align}
		L^{p}(0, T ; L^2) \cap W^{1 / 3,4}\left(0, T ; H^{-\beta}\right) \subset C\left([0, T] ; H^{-\delta}\right)
	\end{align}
	is a compact embedding.
	\\	(ii) For any $\gamma\in(0,1/2)$, let $\beta>5$, then
	\begin{equation}L^{2}\left(0, T ; H^{1}\right) \cap W^{\gamma, 2}\left(0, T ; H^{-\beta}\right) \subset L^{2}\left(0, T ; L^{2}\right)
	\end{equation}
	is a compact embedding.
\end{theorem}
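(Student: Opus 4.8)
Both assertions are special cases of the abstract compactness framework of Simon, so my plan is to identify, in each case, a triple of (zero-mean) Sobolev spaces $X\hookrightarrow B\hookrightarrow Y$ with $X$ compactly embedded in $B$, and then check that the stated time-regularity is exactly what the corresponding corollary requires. The only genuinely analytic ingredients are the Rellich compactness $H^{s_1}\hookrightarrow H^{s_2}$ for $s_1>s_2$ on $\T^2$ (valid on the zero-mean subspace, since the Laplacian there has eigenvalues tending to infinity) and the interpolation inequality $\nor{w}_{H^{-\delta}}\le C\nor{w}_{L^2}^{1-\theta}\nor{w}_{H^{-\beta}}^{\theta}$ with $\theta=\delta/\beta$, obtained by splitting the Fourier series. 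I expect part (ii) to be essentially a citation and part (i) to carry all the difficulty, because (i) asks for compactness in the \emph{sup-in-time} space $C([0,T];H^{-\delta})$ rather than in an $L^p$-in-time space.

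For (ii) I would take $X=H^1$, $B=L^2$ and $Y=H^{-\beta}$, so that $H^1\hookrightarrow L^2$ is compact and $L^2\hookrightarrow H^{-\beta}$ is continuous. The role of the fractional space $W^{\gamma,2}(0,T;H^{-\beta})$, $\gamma>0$, is to control time translations: $\nor{\tau_h v-v}_{L^2(0,T-h;H^{-\beta})}\lesssim h^{\gamma}\nor{v}_{W^{\gamma,2}(0,T;H^{-\beta})}$. Combining this with the $L^2(0,T;H^1)$ bound through the Ehrling-type inequality $\nor{w}_{L^2}\le \eta\nor{w}_{H^1}+C_\eta\nor{w}_{H^{-\beta}}$ yields $\nor{\tau_h v-v}_{L^2(0,T-h;L^2)}\to 0$ uniformly over bounded families, which is precisely the hypothesis of Simon's $L^p$-compactness criterion; hence the embedding into $L^2(0,T;L^2)$ is compact. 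Here $\gamma\in(0,1/2)$ and $\beta>5$ are more than enough for the abstract statement and are dictated only by the regularity available for the SPDE solutions.

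For (i) I would take $X=L^2$, $B=H^{-\delta}$ and $Y=H^{-\beta}$, so that $L^2\hookrightarrow H^{-\delta}$ is compact and $[L^2,H^{-\beta}]_{\theta}=H^{-\delta}$ with $\theta=\delta/\beta$. Since $1/3-1/4=1/12>0$, the one-dimensional Sobolev embedding gives $W^{1/3,4}(0,T;H^{-\beta})\hookrightarrow C^{0,1/12}([0,T];H^{-\beta})$, so a bounded family $F$ is equibounded and equicontinuous as maps into $H^{-\beta}$. To upgrade this to $C([0,T];H^{-\delta})$ I would introduce the time-averages $\bar v_h(t)=\tfrac1h\int_t^{t+h}v$, note $\nor{\bar v_h(t)}_{L^2}\le h^{-1/p}\nor{v}_{L^p(0,T;L^2)}$ by H\"older, and estimate $\nor{\bar v_h(t)-v(t)}_{H^{-\beta}}\lesssim h^{1/12}$ from the H\"older-in-time bound in $Y$. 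Feeding these into the interpolation inequality controls $\nor{\bar v_h-v}_{C([0,T];H^{-\delta})}$ by a power $h^{\,\theta/12-(1-\theta)/p}$.

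The crux is the sign of this exponent: it is positive exactly when $p$ is large compared with $(\beta-\delta)$, which is the content of the hypothesis on $p$ and is how the threshold recorded in the statement arises, following Simon's Corollary 9. Granting positivity, $\bar v_h\to v$ uniformly in $H^{-\delta}$, so every element of the intersection space has a continuous $H^{-\delta}$-valued representative and bounded families are equicontinuous in $C([0,T];H^{-\delta})$. For pointwise relative compactness, for each fixed $t$ the set $\{\bar v_h(t):v\in F\}$ is bounded in $L^2$ (for fixed $h$) and hence relatively compact in $H^{-\delta}$ by Rellich, while $\sup_{v\in F}\nor{\bar v_h(t)-v(t)}_{H^{-\delta}}\to 0$ as $h\to 0$; thus $\{v(t):v\in F\}$ is a uniform limit of relatively compact sets, hence totally bounded in $H^{-\delta}$. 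Arzel\`a--Ascoli in $C([0,T];H^{-\delta})$ then finishes the proof. The main obstacle, as indicated, is the bookkeeping of the averaging-plus-interpolation estimate so that the time modulus in $H^{-\delta}$ genuinely vanishes under the stated relation between $p$, $\beta$ and $\delta$.
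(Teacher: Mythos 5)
Your overall strategy is the right one, and it is worth noting that the paper offers no proof of this theorem at all: it simply cites Corollary 9 (for (i)) and Corollary 5 (for (ii)) of Simon's paper, so what you reconstruct is essentially Simon's own mechanism (a triple $X\subset B\subset Y$ with compact first inclusion, control of time translations or time averages, interpolation). Part (ii) is correct as written: the translation estimate $\|\tau_h v-v\|_{L^2(0,T-h;H^{-\beta})}\lesssim h^\gamma \|v\|_{W^{\gamma,2}(0,T;H^{-\beta})}$, the Ehrling inequality, and Simon's criterion combine exactly as you say, and your argument correctly shows that the specific ranges $\gamma\in(0,1/2)$, $\beta>5$ play no structural role.

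Part (i), however, has a genuine quantitative gap at precisely the step you call the crux. Your exponent $\theta/12-(1-\theta)/p$ with $\theta=\delta/\beta$ is positive if and only if $p>12(1-\theta)/\theta=12(\beta-\delta)/\delta$, which is strictly stronger than the stated hypothesis $p>12(\beta-\delta)$ whenever $\delta<1$; so the assertion that positivity ``is the content of the hypothesis on $p$'' is false (take $\beta=6$, $\delta=1/10$, $p=100$: the statement's threshold is $70.8$ but your exponent is negative). As written you therefore prove the theorem only under the stronger condition $p>12(\beta-\delta)/\delta$. In fact your bookkeeping is the honest one: testing the embedding on $v(t)=g(t)e_k$ with $g$ a bump of height $1$ and width $\tau$, one has $\|v\|_{L^p(0,T;L^2)}\sim\tau^{1/p}$, $\|v\|_{W^{1/3,4}(0,T;H^{-\beta})}\sim |k|^{-\beta}\tau^{-1/12}$ and $\|v\|_{C([0,T];H^{-\delta})}\sim|k|^{-\delta}$, and optimizing over $\tau$ shows that $p\geq 12(\beta-\delta)/\delta$ is necessary even for the continuous embedding; the threshold printed in the statement thus appears to be a misstatement of Simon's condition (harmless downstream, since the solutions in the paper are bounded in $L^p(0,T;L^2)$ for every $p$, but your proof cannot and should not recover the literal claim). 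Separately, a smaller technical slip: you interpolate $\bar v_h(t)-v(t)$ between $L^2$ and $H^{-\beta}$, but $\|v(t)\|_{L^2}$ admits no pointwise-in-$t$ bound from $L^p(0,T;L^2)$ (the $C^{0,1/12}$-in-$H^{-\beta}$ representative may fail to lie in $L^2$ at exceptional times). The standard repair is to telescope dyadic averages: estimate $\|\bar v_h(t)-\bar v_{h/2}(t)\|_{H^{-\delta}}$, where both terms obey your $h^{-1/p}$ bound in $L^2$ and the $h^{1/12}$ bound in $H^{-\beta}$, sum the geometric series, and identify the uniform limit with $v$ via the $H^{-\beta}$-continuity; this restores your argument verbatim with the same exponent. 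The remainder of your scheme (equicontinuity of $\bar v_h$ in $L^2$ for fixed $h$, total boundedness of $\{v(t)\}$ in $H^{-\delta}$ by Rellich plus uniform approximation, and Arzel\`a--Ascoli) is sound.
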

Finally, the square root of the Laplacian on $\T^2$ is defined as $\Lambda=(-\Delta)^{1 / 2}$. By \cite{Resnick} and \cite{roncal2016fractional}, for a integrable function $f:\mathbb{T}^{2}\rightarrow\mathbb{R}$, $\Lambda f(x)$ can be expressed by the multiple Fourier series expansion as
\begin{align}%\label{Lambdafx}
	\Lambda f(x)=F^{-1}\left[F[\Lambda f]\right](x)=(2\pi)^{-2}\sum_{k\in\mathbb{Z}^{2}} |k|\int_{\mathbb{T}^{2}} f(y)e^{-iy\cdot k}\mathrm{d}y e^{ix\cdot k}.
\end{align}
With $\phi$  regular enough and $\psi\in H^1(\mathbb{T}^{2})$, we define the commutator as
\begin{align}\label{commutator}
	[\Lambda, \nabla \phi] \psi=\Lambda(\psi\nabla \phi )-(\Lambda \psi)\nabla \phi ,
\end{align}
and the following estimate valid for any $\epsilon > 0$:
\begin{align}\label{commutatorestimate}
	\|[\Lambda, \nabla \phi] \psi\|_{L^2(\mathbb{T}^{2},\mathbb{R}^{2})}  \leqslant C_{\epsilon}\|\phi\|_{H^{3+\epsilon}(\mathbb{T}^{2})}\|\psi\|_{L^2(\mathbb{T}^{2})}.
\end{align}
Chapter 3.6 of \cite{taylor2012pseudodifferential} is devoted to prove above commutator estimate, where we can find more details of that.
\section{The Scaling limit of Stochastic SQG Equations}
\hspace{1.5em}This section consists of three parts: In subsection 3.1, we prove the existence of solution to stochastic SQG equation \eqref{ito}; then we give the proof of main scaling theorem of SQG equation (Theorem \ref{scalingtheoremSQG}) and some remarks in subsection 3.2; the last subsection 3.3 presents the proof of the uniqueness of solution to deterministic dissipative SQG equation.
\subsection{Existence of Weak Solutions}
\hspace{1.5em} We prove the existence using the Galerkin approximation. Before proceeding further, we give precise definition of weak solutions to \eqref{ito}:
\begin{definition}\label{weaksolutionofSQG}
	We say that equation (\ref{ito}) has a weak solution if there exists a filtered probability space $\left(\Omega, \mathcal{F}, \mathcal{F}_{t}, \mathbb{P}\right)$, a sequence of independent $\mathcal{F}_{t}$-Brownian motions $\left\{B^{k}\right\}_{k \in \mathbb{Z}_{0}^{2}}$ and an $\mathcal{F}_{t}$-progressively measurable process $\omega \in L^{2}\left(\Omega, L^{2}\left(0, T ; L^{2}\right)\right)$ with $\mathbb{P}$-a.s. weakly continuous trajectories such that for any $\phi \in C^{\infty}\left(\mathbb{T}^{2}\right),$ the following equality holds $\mathbb{P}$-a.s. for all $t \in[0, T]$,
	\begin{equation}\label{solution}
		\begin{aligned}
			\left\langle\omega(t), \phi\right\rangle=&\left\langle\omega_{0}, \phi\right\rangle+\int_{0}^{t}\left\langle\omega(s), u(s) \cdot \nabla \phi\right\rangle \mathrm{d} s+\nu \int_{0}^{t}\left\langle\omega(s), \Delta \phi\right\rangle \mathrm{d} s \\
			&-\frac{2 \sqrt{\nu}}{\|\theta\|_{\ell^{2}}} \sum_{k \in \mathbb{Z}_{0}^{2}} \theta_{k} \int_{0}^{t}\left\langle\omega(s), \sigma_{k} \cdot \nabla \phi\right\rangle \mathrm{d} B_{s}^{k}.
		\end{aligned}
	\end{equation}
\end{definition}
\begin{remark}\label{isometry}
	Assume $\omega$ is an $\mathcal{F}_{t}$-progressively measurable process in $L^{2}\left(\Omega, L^{2}\left(0, T ; L^{2}\right)\right)$. Due to $\{\sigma_{k}\}_{k \in \mathbb{Z}_{0}^{2}}$ forming an incomplete orthonormal family in $L^2\left(\mathbb{T}^{2},\mathbb{R}^{2}\right)$, we have
	\begin{equation*}\begin{aligned}
			\mathbb{E}\left(\sum_{k \in \mathbb{Z}_{0}^{2}} \theta_{k}^{2} \int_{0}^{t}\left\langle\omega(s), \sigma_{k} \cdot \nabla \phi\right\rangle^{2} \mathrm{d} s\right)
			& \leq\|\theta\|_{\ell^{\infty}}^{2} \mathbb{E} \int_{0}^{T} \sum_{k \in \mathbb{Z}_{0}^{2}}\left\langle\omega(s) \nabla \phi, \sigma_{k}\right\rangle^{2} \mathrm{d} s \\
			& \leq\|\theta\|_{\ell^{\infty}}^{2} \mathbb{E} \int_{0}^{T}\left\|\omega(s) \nabla \phi\right\|_{L^{2}}^{2} \mathrm{d} s \\
			& \leq\|\theta\|_{\ell^{\infty}}^{2}\|\nabla \phi\|_{\infty}^{2} \mathbb{E} \int_{0}^{T}\left\|\omega(s)\right\|_{L^{2}}^{2} \mathrm{d} s \\
			&<+\infty.
	\end{aligned}\end{equation*}
	Thus the stochastic integral part of (\ref{solution}) makes sense.
\end{remark}
The following theorem is the existence results of weak solutions to \eqref{ito}.
\begin{theorem}\label{main}
	For any $\omega_{0} \in L^{2},$ there exists a weak solution satisfying
	\begin{align*}
		\sup _{t \in[0, T]}\left\|\omega(t)\right\|_{L^{2}} \leq\left\|\omega_{0}\right\|_{L^{2}} \quad \mathbb{P}\text{-a.s.}
	\end{align*}
\end{theorem}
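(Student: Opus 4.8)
The plan is to construct the solution by a Galerkin approximation, to extract a convergent subsequence through a stochastic compactness argument (Prokhorov together with the Skorokhod representation theorem), and then to pass to the limit in the weak formulation \eqref{solution}. As flagged in the introduction, the only genuinely delicate point is the convergence of the nonlinear term, which I intend to treat by the commutator reformulation behind \eqref{commutatorestimate}.

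First I would fix, for each $n\in\N$, the orthogonal projection $\Pi_n$ of $L^2(\T^2)$ onto $\mathrm{span}\{e_k:0<\abs{k}\le n\}$ and consider the finite-dimensional It\^o system obtained by projecting \eqref{ito},
\[
\d\omega_n + \Pi_n\bra{u_n\cdot\nabla\omega_n}\,\d t = \nu\,\Pi_n\Delta\omega_n\,\d t + \frac{2\sqrt{\nu}}{\nor{\theta}_{\ell^2}}\sum_{k}\theta_k\,\Pi_n\bra{\sigma_k\cdot\nabla\omega_n}\,\d B^k_t,
\]
with $u_n=K_0*\omega_n=-\nabla^\perp\Lambda^{-1}\omega_n$ a zeroth-order (hence $L^2$-bounded) operator of $\omega_n$, and $\omega_n(0)=\Pi_n\omega_0$. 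The coefficients are locally Lipschitz on the finite-dimensional range of $\Pi_n$, so a unique local solution exists, and global existence will follow from the a priori bound. For the latter I apply It\^o's formula to $\nor{\omega_n}_{L^2}^2$: since $\div u_n=0$ the tested nonlinearity vanishes, and since $\div\sigma_k=0$ the martingale part vanishes as well, leaving $\d\nor{\omega_n}_{L^2}^2=\big[-2\nu\nor{\nabla\omega_n}_{L^2}^2+\tfrac{4\nu}{\nor{\theta}_{\ell^2}^2}\sum_k\theta_k^2\nor{\Pi_n(\sigma_k\cdot\nabla\omega_n)}_{L^2}^2\big]\d t$. Dropping the projection only increases the second sum, and by \eqref{tensor} the two terms then cancel exactly; hence the bracket is $\le 0$ pathwise and $\nor{\omega_n(t)}_{L^2}^2\le\nor{\Pi_n\omega_0}_{L^2}^2\le\nor{\omega_0}_{L^2}^2$ for all $t$, $\P$-a.s. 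This already establishes the asserted bound at the Galerkin level and uniform bounds in $L^p(0,T;L^2)$ for every $p$.

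Next I would establish the time regularity needed for compactness. Writing $u_n\cdot\nabla\omega_n=\div(u_n\omega_n)$ and $\sigma_k\cdot\nabla\omega_n=\div(\sigma_k\omega_n)$, the products lie in $L^1$ and $L^2$ with norms controlled by $\nor{\omega_0}_{L^2}^2$ and $\nor{\omega_0}_{L^2}$ respectively, so for $\beta>5$ both drift terms (and $\nu\Pi_n\Delta\omega_n$) are bounded in $H^{-\beta}$ uniformly in $n$, giving increments of order $\abs{t-s}$; the Burkholder--Davis--Gundy inequality together with \eqref{tensor} yields $\E\nor{\text{martingale increment}}_{H^{-\beta}}^4\lesssim\abs{t-s}^2$, which controls the $W^{1/3,4}(0,T;H^{-\beta})$ seminorm in expectation. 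Combined with the uniform $L^p(0,T;L^2)$ bound, Theorem \ref{embedding}(i) makes the laws of $\{\omega_n\}$ tight in $C([0,T];H^{-\delta})$. By Prokhorov and Skorokhod I then pass to a new probability space carrying processes $\tilde\omega_n\to\tilde\omega$ a.s.\ in $C([0,T];H^{-\delta})$ with the same laws, so that the bound $\sup_t\nor{\tilde\omega_n(t)}_{L^2}\le\nor{\omega_0}_{L^2}$ persists and, for every $s$, $\tilde\omega_n(s)\rightharpoonup\tilde\omega(s)$ weakly in $L^2$.

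Finally I would pass to the limit in each term of \eqref{solution}. The drift $\nu\langle\omega_n,\Delta\phi\rangle$ and the stochastic term are linear in $\omega_n$ and test against fixed smooth functions, so they converge by the weak $L^2$ convergence of $\tilde\omega_n$ (for the stochastic integral, via the standard convergence-of-stochastic-integrals/martingale-problem argument, the integrands $\langle\tilde\omega_n,\sigma_k\cdot\nabla\phi\rangle$ being continuous linear functionals). The crux, and the main obstacle, is the nonlinear term $\int_0^t\langle\omega_n,u_n\cdot\nabla\phi\rangle\,\d s$: it is quadratic in $\omega_n$ while $u_n$ has the same $L^2$ regularity as $\omega_n$, and a bounded quadratic form is not weakly continuous, so I must first symmetrize it. Using $\psi_n=\Lambda^{-1}\omega_n$, integration by parts and the self-adjointness of $\Lambda$ yield an identity of the form
\[
\langle \omega_n, u_n\cdot\nabla\phi\rangle = \tfrac12\,\langle \nabla^\perp\Lambda^{-1}\omega_n,\; [\Lambda,\nabla\phi]\,\Lambda^{-1}\omega_n\rangle,
\]
with the commutator \eqref{commutator}. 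Here $[\Lambda,\nabla\phi]$ is of order zero, so $[\Lambda,\nabla\phi]\Lambda^{-1}$ is smoothing of order one and hence compact on $L^2$; consequently, from $\tilde\omega_n\rightharpoonup\tilde\omega$ in $L^2$ the factor $[\Lambda,\nabla\phi]\Lambda^{-1}\tilde\omega_n$ converges strongly while $\nabla^\perp\Lambda^{-1}\tilde\omega_n$ converges weakly, and the inner product passes to the limit. The estimate \eqref{commutatorestimate} gives the uniform bound $\abs{\langle\omega_n,u_n\cdot\nabla\phi\rangle}\le C_\epsilon\nor{\phi}_{H^{3+\epsilon}}\nor{\omega_0}_{L^2}^2$, allowing dominated convergence in time. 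This identifies $\tilde\omega$ as a weak solution in the sense of Definition \ref{weaksolutionofSQG}; the bound $\sup_t\nor{\tilde\omega(t)}_{L^2}\le\nor{\omega_0}_{L^2}$ survives by weak lower semicontinuity of the norm, and weak continuity of trajectories follows from this bound together with continuity in $H^{-\delta}$.
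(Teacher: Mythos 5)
Your proposal is correct and follows essentially the same route as the paper: Galerkin approximation, the pathwise energy bound via the cancellation coming from \eqref{tensor}, tightness in $C([0,T];H^{-\delta})$ through Theorem \ref{embedding}(i), Prohorov--Skorokhod, and the commutator identity \eqref{nonlinearterm} to pass to the limit in the quadratic term. The only (interchangeable) variation is at the very last step, where you conclude by observing that $[\Lambda,\nabla\phi]\Lambda^{-1}$ is compact on $L^{2}$ so that one factor converges strongly against the weakly convergent $\nabla^{\perp}\Lambda^{-1}\tilde\omega_{N_i}$, whereas the paper splits the difference into three terms and uses the strong convergence of $\tilde\psi_{N_i}$ in $C([0,T];H^{1-\delta})$, the convergence $\Pi_{N_i}\phi\to\phi$ in $H^{3+\epsilon}$, and the weak $L^{2}$-convergence of $\tilde u_{N_i}$ from Lemma \ref{weakL2}.
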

%\begin{itemize}
%\item[1.]
First, we will look for an approximate solution $\omega_N(\cdot)$. For $N \geq 1,$ let $H_{N}=\operatorname{span}\left\{e_{k}:k \in \mathbb{Z}_{0}^{2},|k| \leq N\right\}$,
which is a finite-dimensional subspace of $L^{2}\left(\mathbb{T}^{2}\right)$. Denote the orthogonal projection $\Pi_{N}: L^{2}\left(\mathbb{T}^{2}\right) \rightarrow H_{N}$ and $\omega_N:=\Pi_{N} \omega=\sum_{|k| \leq N}\left\langle\omega, e_{k}\right\rangle e_{k},$ %which can also act on vector valued functions.

Let
\begin{equation}\label{bN}
	\begin{aligned}
		b_{N}(\omega) &=\Pi_{N}\left(\left(K_{0} * \Pi_{N} \omega\right) \cdot \nabla\left(\Pi_{N} \omega\right)\right), \\
		G_{N}^{k}(\omega) &=\Pi_{N}\left(\sigma_{k} \cdot \nabla\left(\Pi_{N} \omega\right)\right), \quad \omega \in L^{2}, \quad k \in \mathbb{Z}_{0}^{2}.
	\end{aligned}
\end{equation}
Note that, for fixed $N,$ there are only finitely many $k \in \mathbb{Z}_{0}^{2}$ such that $G_{N}^{k}$ is not zero. We shall view $b_{N}$ and $G_{N}^{k}$ as vector fields on $H_{N}$ whose generic element is denoted by $\omega_{N}$. The following useful properties hold:
\begin{align}\label{property}
	\left\langle b_{N}\left(\omega_{N}\right), \omega_{N}\right\rangle=\left\langle G_{N}^{k}\left(\omega_{N}\right), \omega_{N}\right\rangle=0, \quad \text { for all } \omega_{N} \in H_{N}.
\end{align}
Consider the finite dimensional version of (\ref{ito}) on $H_N$:
\begin{equation}\label{finitedimension}
	\left\{\begin{array}{l}
		\mathrm{d} \omega_N(t)
		=-b_{N}\left(\omega_{N}(t)\right) \mathrm{d} t+\nu \Delta \omega_{N}(t) \mathrm{d} t+\frac{2 \sqrt{\nu}}{\|\theta\|_{\ell^{2}}} \sum_{k \in \mathbb{Z}_{0}^{2}} \theta_{k} G_{N}^{k}\left(\omega_{N}(t)\right) \mathrm{d} B_{t}^{k},\\
		\omega_N(0)=\Pi_N\omega_0,
	\end{array}\right.
\end{equation}
with $\omega_0\in L^2$ is the initial condition in Theorem \ref{main}. It follows from standard SDE theory that there exists a unique local strong solution $\omega_N(\cdot)$ under any initial conditions.
%\item[2.]

Next, we will obtain a priori estimate of $\omega_N$. By It\^{o} formula,
\begin{equation*}
	\begin{aligned}
		\mathrm{d}\left\|\omega_{N}(t)\right\|_{L^{2}}^{2}=&-2\left\langle\omega_{N}(t), b_{N}\left(\omega_{N}(t)\right)\right\rangle \mathrm{d} t+2 \nu\left\langle\omega_{N}(t), \Delta \omega_{N}(t)\right\rangle \mathrm{d} t \\
		&+ \frac{4 \sqrt{\nu}}{\|\theta\|_{\ell^{2}}} \sum_{k \in \mathbb{Z}_{0}^{2}} \theta_{k}\left\langle\omega_{N}(t), G_{N}^{k}\left(\omega_{N}(t)\right)\right\rangle \mathrm{d} B_{t}^{k} \\
		&+\frac{4 \nu}{\|\theta\|^2_{\ell^{2}}} \sum_{k \in \mathbb{Z}_{0}^{2}} \theta_{k}^{2}\left\|G_{N}^{k}\left(\omega_{N}(t)\right)\right\|_{L^{2}}^{2} \mathrm{d} t.
	\end{aligned}
\end{equation*}
The first and the third terms on the right-hand side vanish due to properties (\ref{property}). And the following relation holds:
\begin{equation*}
	\begin{aligned}
		\left\|G_{N}^{k}\left(\omega_{N}(t)\right)\right\|_{L^{2}}%=\left\|\Pi_{N}\left(\sigma_{k} \cdot \nabla \omega_{N}(t)\right)\right\|_{L^{2}}
		\leq\left\|\sigma_{k} \cdot \nabla \omega_{N}(t)\right\|_{L^{2}}.
	\end{aligned}
\end{equation*}
Therefore,

\begin{align*}
	\frac{4 \nu}{\|\theta\|^2_{\ell^{2}}} \sum_{k \in \mathbb{Z}_{0}^{2}} \theta_{k}^{2}\left\|G_{N}^{k}\left(\omega_{N}(t)\right)\right\|_{L^{2}}^{2} & \leq \frac{4 \nu}{\|\theta\|^2_{\ell^{2}}} \sum_{k \in \mathbb{Z}_{0}^{2}} \theta_{k}^{2} \int_{\mathbb{T}^{2}}\left(\sigma_{k} \cdot \nabla \omega_{N}(t)\right)^{2} \mathrm{d} x \\
	&=2 \nu\left\|\nabla \omega_{N}(t)\right\|_{L^{2}}^{2},
\end{align*}
where the last equality is due to (\ref{tensor}).
Combining these results above, we obtain $\mathrm{d}\left\|\omega_{N}(t)\right\|_{L^{2}}^{2} \leq 0$, which implies the following inequality and thus the global existence of solution to equation (\ref{finitedimension}) holds; moreover,
\begin{equation}\label{finiteL2ineq}
	\sup _{t \in[0, T]}\left\|\omega_{N}(t)\right\|_{L^{2}} \leq\left\|\omega_{N}(0)\right\|_{L^{2}} \leq\left\|\omega_{0}\right\|_{L^{2}}.%\quad \mathbb{P}\text{-a.s.}
\end{equation}
Because for any $p>1$:
\begin{align*}
	\mathbb{E} \int_{0}^{T}\left\|\omega_{N}(t)\right\|_{L^{2}}^{p} \mathrm{d} t \leq T\left\|\omega_{N}(0)\right\|_{L^{2}}^{p} \leq T\left\|\omega_{0}\right\|_{L^{2}}^{p},
\end{align*}
then $\{\omega_N(\cdot)\}_{N\geq1}$ is uniformly bounded in $L^{p}\left(\Omega, L^{p}\left(0, T ; L^{2}\right)\right)$. Since $u_N=K_0*\omega_{N}$, $\{u_N(\cdot)\}_{N\geq1}$ is also uniformly bounded in $L^{p}\left(\Omega, L^{p}\left(0, T ; L^{2}\right)\right)$.

%\item[3.]

Then, we will look for a candidate $(\tilde{\omega}(\cdot),B_{\cdot})$ as a weak solution to (\ref{solution}) applying the compactness results. Similar to the methods using in \cite{FGL} and \cite{LS},
let $Q_N$ denotes the law of $\omega_N$. By the assertion (i) of Theorem \ref{embedding}, to show $\{Q_N\}_{N\geq1}$ is tight on $C\left([0, T] ; H^{-\delta}\right)$, it is sufficient to prove
\begin{align}\label{boundedness}
	\sup _{N \geq 1}\mathbb{E} \int_{0}^{T}\left\|\omega_{N}(t)\right\|_{L^2}^{p} \mathrm{d} t+\sup _{N \geq 1} \mathbb{E} \int_{0}^{T} \int_{0}^{T} \frac{\left\|\omega_{N}(t)-\omega_{N}(s)\right\|_{H^{-\beta}}^{4}}{|t-s|^{7 / 3}} \mathrm{d} t \mathrm{d} s<\infty.
\end{align}
It remains to estimate the second term on the left-hand side of this inequality.
\begin{lemma}\label{lemma1}
	There is a constant $C=C(\nu,T,\left\|\omega_{0}\right\|_{L^2})$ such that for any $N \geq 1$ and $0 \leq s<t \leq T$,
	\begin{align}\label{four}
		\mathbb{E}\left(\left\langle\omega_{N}(t)-\omega_{N}(s), e_{k}\right\rangle^{4}\right) \leq C|k|^{8}|t-s|^{2}, \quad \text { for all } k \in \mathbb{Z}_{0}^{2}.
	\end{align}
	\begin{proof}
		It is enough to consider $|k| \leq N .$ By (\ref{finitedimension}) we obtain
		\begin{equation}\label{le1}
			\begin{aligned}
				\left\langle\omega_{N}(t)-\omega_{N}(s), e_{k}\right\rangle=& \int_{s}^{t}\left\langle\omega_{N}(r), u_{N}(r) \cdot \nabla e_{k}\right\rangle \mathrm{d} r+\nu \int_{s}^{t}\left\langle\omega_{N}(r), \Delta e_{k}\right\rangle \mathrm{d} r \\
				&-\frac{2 \sqrt{\nu}}{\|\theta\|_{\ell^{2}}} \sum_{l \in \mathbb{Z}_{0}^{2}} \theta_{l} \int_{s}^{t}\left\langle\omega_{N}(r), \sigma_{l} \cdot \nabla e_{k}\right\rangle \mathrm{d} B_{r}^{l}.
			\end{aligned}
		\end{equation}
		Firstly, using H\"{o}lder inequality and inequality (\ref{finiteL2ineq}), we have
		\begin{equation*}
			\begin{aligned}
				\mathbb{E}\left(\left|\int_{s}^{t}\left\langle\omega_{N}(r), u_{N}(r) \cdot \nabla e_{k}\right\rangle \mathrm{d} r\right|^{4}\right) & \leq|t-s|^{3} \mathbb{E} \int_{s}^{t}\left\langle\omega_{N}(r), u_{N}(r) \cdot \nabla e_{k}\right\rangle^{4} \mathrm{d} r \\
				& \leq|t-s|^{3} \mathbb{E} \int_{s}^{t}\left\|\omega_{N}(r)\right\|_{L^{2}}^{4}\left\|u_{N}(r)\right\|_{L^{2}}^{4}\left\|\nabla e_{k}\right\|_{\infty}^{4} \mathrm{d} r \\
				& \leq C\left\|\omega_{0}\right\|_{L^{2}}^{8}|k|^{4}|t-s|^{4},
			\end{aligned}
		\end{equation*}
		where the last step is due to the fact (\ref{factofbasis}). Similarly, the estimate
		\begin{equation*}
			\mathbb{E}\left(\left|\int_{s}^{t}\left\langle\omega_{N}(r), \Delta e_{k}\right\rangle \mathrm{d} r\right|^{4}\right) \leq C \left\|\omega_{0}\right\|_{L^{2}}^{4}|k|^{8}|t-s|^{4}
		\end{equation*}
		holds. Then, by the Burkholder-Davis-Gundy inequality,
		\begin{equation*}
			\mathbb{E}\left(\left|\frac{2 \sqrt{\nu}}{\|\theta\|_{\ell^{2}}} \sum_{l \in \mathbb{Z}_{0}^{2}} \theta_{l} \int_{s}^{t}\left\langle\omega_{N}(r), \sigma_{l} \cdot \nabla e_{k}\right\rangle \mathrm{d} B_{r}^{l}\right|^{4}\right) \leq C \frac{16 \nu^2}{\|\theta\|^{4}_{\ell^{2}}} \mathbb{E}\left(\left|\sum_{l \in \mathbb{Z}_{0}^{2}} \theta_{l}^{2} \int_{s}^{t}\left\langle\omega_{N}(r), \sigma_{l} \cdot \nabla e_{k}\right\rangle^{2} \mathrm{d} r\right|^{2}\right).
		\end{equation*}
		Noting that $\left\{\sigma_{l}\right\}_{l \in \mathbb{Z}_{0}^{2}}$ is an incomplete orthonormal family of $L^{2}(\mathbb{T}^2,\mathbb{R}^2)$ and using inequality (\ref{finiteL2ineq}), we have
		\begin{equation*}
			\begin{aligned}
				\sum_{l \in \mathbb{Z}_{0}^{2}} \theta_{l}^{2}\left\langle\omega_{N}(r), \sigma_{l} \cdot \nabla e_{k}\right\rangle^{2} & \leq\|\theta\|_{\ell^{\infty}}^{2} \sum_{l \in \mathbb{Z}_{0}^{2}}\left\langle\omega_{N}(r) \nabla e_{k}, \sigma_{l}\right\rangle^{2} \\
				& \leq\|\theta\|_{\ell^{\infty}}^{2}\left\|\omega_{N}(r) \nabla e_{k}\right\|_{L^{2}}^{2} \\
				& \leq C\|\theta\|_{\ell^{\infty}}^{2}|k|^{2}\left\|\omega_{0}\right\|_{L^{2}}^{2}.
			\end{aligned}
		\end{equation*}
		Therefore,
		\begin{equation*}
			\begin{aligned}
				\mathbb{E}\left(\left|\frac{2 \sqrt{\nu}}{\|\theta\|_{\ell^{2}}} \sum_{l \in \mathbb{Z}_{0}^{2}} \theta_{l} \int_{s}^{t}\left\langle\omega_{N}(r), \sigma_{l} \cdot \nabla e_{k}\right\rangle d B_{r}^{l}\right|^{4}\right) & \leq C^{\prime} \frac{16 \nu^2}{\|\theta\|^{4}_{\ell^{2}}}\|\theta\|_{\ell^{\infty}}^{4}|k|^{4}\left\|\omega_{0}\right\|_{L^{2}}^{4}|t-s|^{2} \\
				& \leq C|k|^{4}|t-s|^{2}.
			\end{aligned}
		\end{equation*}
		
		Combining the above estimates with (\ref{le1}), we finally get the desired inequality (\ref{four}).
	\end{proof}
\end{lemma}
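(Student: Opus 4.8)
The plan is to start from the finite-dimensional Galerkin equation \eqref{finitedimension} tested against the basis function $e_k$, which expresses the increment $\langle\omega_N(t)-\omega_N(s),e_k\rangle$ as the sum of three contributions: a nonlinear transport term $\int_s^t\langle\omega_N(r),u_N(r)\cdot\nabla e_k\rangle\,dr$, a viscous term $\nu\int_s^t\langle\omega_N(r),\Delta e_k\rangle\,dr$, and a stochastic integral against the Brownian motions. Using the elementary inequality $(a+b+c)^4\le 27(a^4+b^4+c^4)$, it then suffices to bound the fourth moment of each of the three terms separately and to check that each is controlled by $C|k|^8|t-s|^2$.

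For the two deterministic (Lebesgue) integrals I would apply H\"older's inequality in time to extract a factor $|t-s|^3$, reducing matters to $\mathbb E\int_s^t(\cdot)^4\,dr$. The integrands are then controlled pointwise in $r$ using the uniform a priori bound $\sup_{t}\|\omega_N(t)\|_{L^2}\le\|\omega_0\|_{L^2}$ from \eqref{finiteL2ineq}, the fact that $\|u_N\|_{L^2}\le C\|\omega_N\|_{L^2}$ (the stream function has the same regularity as the vorticity in SQG), and the explicit derivative scalings $\|\nabla e_k\|_\infty\sim|k|$ and $\Delta e_k=-4\pi^2|k|^2e_k$ from \eqref{factofbasis}. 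This produces bounds of the form $C|k|^4|t-s|^4$ for the transport term and $C|k|^8|t-s|^4$ for the viscous term; since $|t-s|\le T$, both are absorbed into $C|k|^8|t-s|^2$. I note that the viscous term, through the factor $\Delta e_k$, is the one responsible for the full power $|k|^8$.

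The stochastic term is where the real work lies, and it is the step I expect to be the main obstacle, since it is what fixes the exponent $|t-s|^2$ (the deterministic terms would in fact permit a higher power). Here I would invoke the Burkholder--Davis--Gundy inequality to bound the fourth moment of the martingale by the second moment of its quadratic variation, namely $C\nu^2\|\theta\|_{\ell^2}^{-4}\,\mathbb E\big(\sum_l\theta_l^2\int_s^t\langle\omega_N(r),\sigma_l\cdot\nabla e_k\rangle^2\,dr\big)^2$. The key observation is that the quadratic-variation integrand is deterministically bounded: rewriting $\langle\omega_N,\sigma_l\cdot\nabla e_k\rangle=\langle\omega_N\nabla e_k,\sigma_l\rangle$, pulling out $\|\theta\|_{\ell^\infty}^2$, and applying Bessel's inequality for the orthonormal family $\{\sigma_l\}$ yields $\sum_l\theta_l^2\langle\omega_N,\sigma_l\cdot\nabla e_k\rangle^2\le\|\theta\|_{\ell^\infty}^2\|\omega_N\nabla e_k\|_{L^2}^2\le C\|\theta\|_{\ell^\infty}^2|k|^2\|\omega_0\|_{L^2}^2$. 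A single time integration then produces the factor $|t-s|^2$, while the prefactor stays harmless because $\|\theta\|_{\ell^\infty}/\|\theta\|_{\ell^2}\le1$, leaving a bound $C|k|^4|t-s|^2\le C|k|^8|t-s|^2$.

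Collecting the three estimates gives the claimed inequality \eqref{four} with a constant $C=C(\nu,T,\|\omega_0\|_{L^2})$ independent of both $N$ and $k$. The only points requiring care are the uniformity of $C$ in the Galerkin level $N$, which is guaranteed throughout by \eqref{finiteL2ineq}, and the correct bookkeeping of powers of $|k|$, so that after summing over $k$ with $H^{-\beta}$ weights this per-mode estimate feeds into the tightness criterion \eqref{boundedness}.
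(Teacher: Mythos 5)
Your proposal is correct and follows essentially the same route as the paper's proof: the same decomposition via the Galerkin equation tested against $e_k$, H\"older in time plus the uniform bound \eqref{finiteL2ineq} and the scalings \eqref{factofbasis} for the two Lebesgue integrals, and Burkholder--Davis--Gundy combined with Bessel's inequality for the incomplete orthonormal family $\{\sigma_l\}$ for the martingale term. Your added remarks (the elementary inequality for the fourth power of a sum, the bound $\|\theta\|_{\ell^\infty}/\|\theta\|_{\ell^2}\le 1$ making the prefactor harmless, and the absorption of $|t-s|^4$ into $T^2|t-s|^2$) are exactly the implicit steps in the paper's argument.
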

By Cauchy's inequality and Lemma \ref{lemma1},
\begin{equation*}
	\begin{aligned}
		\mathbb{E}\left(\left\|\omega_{N}(t)-\omega_{N}(s)\right\|_{H^{-\beta}}^{4}\right) &=\mathbb{E}\left(\sum_{k \in \mathbb{Z}_{0}^{2}} \frac{\left\langle\omega_{N}(t)-\omega_{N}(s), e_{k}\right\rangle^{2}}{|k|^{2\beta}}\right)^{2} \\
		& \leq\left[\sum_{k \in \mathbb{Z}_{0}^{2}} \frac{1}{|k|^{2\beta}}\right]\left[\sum_{k \in \mathbb{Z}_{0}^{2}} \frac{\mathbb{E}\left(\left\langle\omega_{N}(t)-\omega_{N}(s), e_{k}\right\rangle^{4}\right)}{|k|^{2\beta}}\right] \\
		& \leq C|t-s|^{2} \sum_{k \in \mathbb{Z}_{0}^{2}} \frac{1}{|k|^{2\beta-8}} \leq C^{\prime}|t-s|^{2},
	\end{aligned}
\end{equation*}
where the last step is due to $\beta>5$. Thus,
\begin{equation*}
	\sup _{N \geq 1}\mathbb{E} \int_{0}^{T} \int_{0}^{T} \frac{\left\|\omega_{N}(t)-\omega_{N}(s)\right\|^4_{H^{-\beta}}}{|t-s|^{7 / 3}} \mathrm{d} t \mathrm{d} s \leq C.
\end{equation*}
We need to consider $\{\omega_N(\cdot)\}_{N\geq1}$
together with the laws of the sequence of Brownian motions. For this purpose, we endow $\mathbb{R}^{\mathbb{Z}_{0}^{2}}$ with the metric
\begin{align*}
	d_{\infty}(a, b)=\sum_{k \in \mathbb{Z}_{0}^{2}} \frac{\left|a_{k}-b_{k}\right| \wedge 1}{2^{|k|}}, \quad a, b \in \mathbb{R}^{\mathbb{Z}_{0}^{2}}.
\end{align*}
The metric space $\left(\mathbb{R}^{\mathbb{Z}_{0}}, d_{\infty}(\cdot, \cdot)\right)$ is separable and complete (see \cite{billingsley2013convergence}, Example 1.2). The distance of function space $C\left([0, T], \mathbb{R}^{\mathbb{Z}_{0}^{2}}\right)$ is given by
\begin{align*}
	d(w, \hat{w})=\sup _{t \in[0, T]} d_{\infty}(w(t), \hat{w}(t)), \quad w, \hat{w} \in C\left([0, T], \mathbb{R}^{\mathbb{Z}_{0}^{2}}\right),
\end{align*}
which makes $C\left([0, T], \mathbb{R}^{\mathbb{Z}_{0}^{2}}\right)$ a Polish space. We regard the sequence of Brownian motions $B_t:=\left\{\left(B_{t}^{k}\right)_{0 \leq t \leq T}: k \in \mathbb{Z}_{0}^{2}\right\}$ as a random variable with value in $C\left([0, T], \mathbb{R}^{\mathbb{Z}_{0}^{2}}\right)$. For any $N\geq 1$, denote by $P_N$ the joint law of $\left(\omega_N(\cdot), B_{\cdot}^{N}\right)$ on
\begin{equation*}
	\mathcal{X} \times \mathcal{Y}:=C\left([0, T] ; H^{-\delta}\right) \times C\left([0, T], \mathbb{R}^{\mathbb{Z}_{0}^{2}}\right).
\end{equation*}
Because the marginal laws are tight on $\mathcal{X}$ and $\mathcal{Y}$ respectively, $\{P_N\}_{N\geq1}$ is tight on $\mathcal{X} \times \mathcal{Y}$. The Prohorov's theorem implies that there exists a subsequence $\{N_i\}_{i\geq 1}$, such that $\{P_{N_i}\}_{i\geq 1}$ converges weakly as $i\rightarrow\infty$ to some probability measure $P$ on $\mathcal{X} \times \mathcal{Y}$ . By Skorokhod's representation theorem, there exists a family of stochastic processes $\left(\tilde{\omega}_{N_{i}}(\cdot), \tilde{B}^{N_{i}}\right)$ on some new probability space  $(\tilde{\Omega}, \tilde{\mathcal{F}}, \tilde{\mathbb{P}})$, such that
\begin{itemize}
	\item[(i)] $\left(\tilde{\omega}_{N_{i}}(\cdot), \tilde{B}^{N_{i}}\right)$ has the same law as $\left(\omega_{N_{i}}(\cdot), B^{N_{i}}\right)$ for any $i \geq 1$;
	\item[(ii)] $\tilde{\mathbb{P}}\text{-a.s.}$, $\left(\tilde{\omega}_{N_{i}}(\cdot), \tilde{B}^{N_{i}}\right)$ converges to the limit $(\tilde{\omega}(\cdot), \tilde{B})$ in the topology of $\mathcal{X} \times \mathcal{Y}$.
\end{itemize}
By (\ref{finiteL2ineq}), the following inequality holds:
\begin{align}\label{newl2ineq}
	\sup _{t \in[0, T]}\left\|\tilde{\omega}_{N_{i}}(t)\right\|_{L^{2}} \leq\left\|\omega_{0}\right\|_{L^{2}} \quad \tilde{\mathbb{P}} \text {-a.s. }
\end{align}
Denote by $\tilde{u}_{N_{i}}=K_0* \tilde{\omega}_{N_{i}}$ and $\tilde{u}=K_0* \tilde{\omega}$, which are the velocity fields defined on the new probability space $(\tilde{\Omega}, \tilde{\mathcal{F}}, \tilde{\mathbb{P}})$. By the above discussion, we know that $\tilde{\mathbb{P}}$-a.s., $\{\tilde{\omega}_{N_{i}}(\cdot)\}_{i\geq1}$ converges strongly to $\tilde{\omega}(\cdot)$ in $C\left([0, T] ; H^{-\delta}(\mathbb{T}^2)\right)$, which implies that
$\tilde{\mathbb{P}}$-a.s., $\{\tilde{u}_{N_{i}}(\cdot)\}_{i\geq1}$ converges strongly to $\tilde{u}(\cdot)$ in $C\left([0, T] ; H^{-\delta}(\mathbb{T}^2,\mathbb{R}^2)\right)$.
The candidate $\tilde{\omega}$ has the following properties: 
\begin{lemma}\label{trajectories}
	The process $\tilde{\omega}$ has $\tilde{\mathbb{P}}$-a.s. weakly continuous trajectories in $L^{2}$ and satisfies
	\begin{align}\label{boundness}
		\sup _{t \in[0, T]}\|\tilde{\omega}(t)\|_{L^{2}} \leq\left\|\omega_{0}\right\|_{L^{2}}  \quad \tilde{\mathbb{P}}\text{-a.s.}
	\end{align}
\end{lemma}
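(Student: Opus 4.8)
The plan is to combine the two facts already established for the approximating sequence on the new probability space, namely the uniform bound \eqref{newl2ineq} and the $\tilde{\mathbb{P}}$-a.s. strong convergence $\tilde\omega_{N_i}\to\tilde\omega$ in $C([0,T];H^{-\delta})$. Throughout I would work on the full-measure event where both hold simultaneously, and argue deterministically there.

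First I would establish the pointwise-in-time bound \eqref{boundness}. Fix $t\in[0,T]$. Strong convergence in $C([0,T];H^{-\delta})$ gives $\tilde\omega_{N_i}(t)\to\tilde\omega(t)$ in $H^{-\delta}$. By \eqref{newl2ineq} the sequence $\{\tilde\omega_{N_i}(t)\}_i$ is bounded in $L^2$, so along a further subsequence it converges weakly in $L^2$ to some $g\in L^2$; since the embedding $L^2\hookrightarrow H^{-\delta}$ is continuous, this weak $L^2$ convergence also yields convergence to $g$ in $H^{-\delta}$, and uniqueness of limits forces $g=\tilde\omega(t)$. In particular $\tilde\omega(t)\in L^2$, and by weak lower semicontinuity of the $L^2$ norm,
$$\|\tilde\omega(t)\|_{L^2}\le\liminf_{i}\|\tilde\omega_{N_i}(t)\|_{L^2}\le\|\omega_0\|_{L^2}.$$
As $t$ is arbitrary, this gives \eqref{boundness}.

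Next I would prove weak continuity of $t\mapsto\tilde\omega(t)$ in $L^2$. For smooth $\phi\in C^\infty(\mathbb{T}^2)$ one has $\phi\in H^\delta$, and since $\tilde\omega(\cdot)\in C([0,T];H^{-\delta})$ with $\tilde\omega(t)\in L^2$, the $L^2$ pairing $\langle\tilde\omega(t),\phi\rangle$ coincides with the $H^{-\delta}$–$H^{\delta}$ duality pairing and is therefore continuous in $t$. For a general $\psi\in L^2$ I would choose a smooth $\phi$ with $\|\psi-\phi\|_{L^2}$ small and split
$$|\langle\tilde\omega(t)-\tilde\omega(s),\psi\rangle|\le|\langle\tilde\omega(t)-\tilde\omega(s),\phi\rangle|+|\langle\tilde\omega(t)-\tilde\omega(s),\psi-\phi\rangle|,$$
bounding the last term by $(\|\tilde\omega(t)\|_{L^2}+\|\tilde\omega(s)\|_{L^2})\|\psi-\phi\|_{L^2}\le 2\|\omega_0\|_{L^2}\|\psi-\phi\|_{L^2}$ via \eqref{boundness}. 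Letting $s\to t$ and then refining the approximation shows $t\mapsto\langle\tilde\omega(t),\psi\rangle$ is continuous for every $\psi\in L^2$, which is exactly weak continuity in $L^2$.

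The main obstacle is the first step: one must upgrade the limit from $H^{-\delta}$ to $L^2$ and verify that the $L^2$-weak limit of $\tilde\omega_{N_i}(t)$ is precisely $\tilde\omega(t)$, rather than merely extracting some $L^2$ limit. This is where the continuity of $L^2\hookrightarrow H^{-\delta}$ and uniqueness of limits in $H^{-\delta}$ are essential; once the identification is in place, the lower-semicontinuity bound and the density argument for weak continuity are routine.
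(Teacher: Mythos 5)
Your proof is correct. The paper does not prove this lemma itself but defers to Lemma 3.5 of \cite{FGL}; the argument there is essentially the one you give — identify $\tilde{\omega}(t)$ as the weak $L^2$ limit of the uniformly bounded sequence $\tilde{\omega}_{N_i}(t)$ using the $C([0,T];H^{-\delta})$ convergence and uniqueness of limits, deduce \eqref{boundness} by weak lower semicontinuity of the norm, and obtain weak continuity of trajectories by the density argument (the same $\eps$-approximation the paper repeats in the proof of Lemma \ref{weakL2}). The only point worth making explicit is that you intersect the countably many full-measure events from \eqref{newl2ineq} before arguing pathwise, which you do.
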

The proof of this lemma can be found in \cite{FGL} as Lemma 3.5.
%\item[4.]

Finally, we prove the convergence of the following equation as $i\rightarrow\infty$:
\begin{align}\label{Nisolution}
	\begin{aligned}
		\left\langle\tilde{\omega}_{N_{i}}(t), \phi\right\rangle=&\left\langle\omega_{N_{i}}(0), \phi\right\rangle+\int_{0}^{t}\left\langle\tilde{\omega}_{N_{i}}(s), \tilde{u}_{N_{i}}(s) \cdot \nabla\left(\Pi_{N_{i}} \phi\right)\right\rangle \mathrm{d} s \\
		&+\nu \int_{0}^{t}\left\langle\tilde{\omega}_{N_{i}}(s), \Delta \phi\right\rangle \mathrm{d} s-\frac{2 \sqrt{\nu}}{\|\theta\|_{\ell^{2}}} \sum_{k \in \mathbb{Z}_{0}^{2}} \theta_{k} \int_{0}^{t}\left\langle\tilde{\omega}_{N_{i}}(s), \sigma_{k} \cdot \nabla\left(\Pi_{N_{i}} \phi\right)\right\rangle \mathrm{d} \tilde{B}_{s}^{N_{i}, k},
	\end{aligned}
\end{align}
where $\phi\in C^{\infty}(\mathbb{T}^2)$ and $t \in[0, T]$.
All the terms are considered as real valued stochastic processes. It is a direct corollary that the left-hand side of (\ref{Nisolution}) and the first and third terms on the right-hand side of that converge $\tilde{\mathbb{P}}$-a.s.. Also, the proof of convergence of the stochastic integral term is standard (cf. Proof of Theorem 2.2 in \cite{FGL} or Proof of Theorem 2.1 in \cite{LS}). It is sufficient to prove the convergence of the nonlinear term which follows the idea presented by \cite{Resnick}.

First, we show that the nonlinear term can be expressed by the commutator as we already defined in (\ref{commutator}).  Recall that $\psi=-\Lambda^{-1}\omega$, $u=\nabla^{\perp}\psi$, for $\psi\in H^2(\T^2)$:
\begin{equation*}
	\begin{aligned}
		\int_{\mathbb{T}^{2}} \omega u \cdot \nabla \phi \mathrm{d} x=& \int_{\mathbb{T}^{2}} (-\Lambda \psi)\nabla^{\perp} \psi \cdot \nabla \phi \mathrm{d} x\\
		=&-\frac{1}{2}\int_{\mathbb{T}^{2}}  (\nabla^{\perp}(\psi\Lambda\psi)-\psi\nabla^{\perp}(\Lambda\psi))  \cdot \nabla \phi  \mathrm{d} x - \frac{1}{2}\int_{\mathbb{T}^{2}} (\Lambda \psi) \nabla^{\perp} \psi \cdot \nabla \phi \mathrm{d} x\\
		=&-\frac{1}{2}\int_{\mathbb{T}^{2}}  \nabla^{\perp}(\psi\Lambda\psi)   \cdot\nabla \phi \mathrm{d} x+\frac{1}{2}\int_{\mathbb{T}^{2}}  \psi\Lambda(\nabla^{\perp}\psi)  \cdot \nabla \phi  \mathrm{d} x
		-\frac{1}{2}\int_{\mathbb{T}^{2}} (\Lambda \psi)\nabla \phi  \cdot \nabla^{\perp} \psi \mathrm{d} x.
	\end{aligned}
\end{equation*}
The last equality is due to the commutativity of $\nabla^{\perp}$ and $\Lambda$. It is obvious that the first term equals to zero, and by the symmetric of the operator $\Lambda$, we have
\begin{equation}\label{nonlinearterm}
	\begin{aligned}
		\int_{\mathbb{T}^{2}} \omega u \cdot \nabla \phi \mathrm{d} x=&\frac{1}{2} \int_{\mathbb{T}^{2}} \left(\Lambda(\psi\nabla \phi )-(\Lambda \psi) \nabla \phi\right) \cdot \nabla^{\perp} \psi \mathrm{d} x\\=&\frac{1}{2} \int_{\mathbb{T}^{2}} u \cdot [\Lambda, \nabla \phi] \psi \mathrm{d} x.
	\end{aligned}
\end{equation}
Here, due to the commutator estimate (\ref{commutatorestimate}), it is enough to require $\psi\in H^1(\T^2)$, i.e. $\omega\in L^2(\T^2)$. Let $\tilde{\psi}=-\Lambda^{-1} \tilde{\omega}$, $\tilde{\psi}_{N_i}=-\Lambda^{-1} \tilde{\omega}_{N_i}$. By (\ref{nonlinearterm}),
\begin{align*}
	&\int_{0}^{t}\left\langle\tilde{\omega}, \tilde{u}\cdot\nabla \phi \right\rangle\mathrm{d}s-\int_{0}^{t}\left\langle \tilde{\omega}_{N_{i}}, \tilde{u}_{N_{i}}\cdot \nabla \Pi_{N_i} \phi \right\rangle\mathrm{d}s\\
	=&\frac{1}{2} \int_{0}^{t}\int_{{\mathbb{T}}^{2}}\left(\tilde{u}-\tilde{u}_{N_{i}}\right)\cdot[\Lambda, \nabla \phi] \tilde{\psi}\mathrm{d} x\mathrm{d}s
	+\frac{1}{2}\int_{0}^{t}\int_{{\mathbb{T}}^{2}}\tilde{u}_{N_{i}}\cdot\left[\Lambda, \nabla\left(\phi-\Pi_{N_i} \phi\right)\right] \tilde{\psi}\mathrm{d} x\mathrm{d}s\\
	&+\frac{1}{2}\int_{0}^{t}\int_{{\mathbb{T}}^{2}}\tilde{u}_{N_{i}}\cdot\left[\Lambda, \nabla \Pi_{N_i} \phi\right]\left(\tilde{\psi}-\tilde{\psi}_{N_i}\right) \mathrm{d} x\mathrm{d}s.
\end{align*}
$\tilde{u}_{N_i}$ is bounded, $\tilde{\psi}_{N_i}$ converges strongly to $\tilde{\psi}$ in $C\left([0, T] ; H^{1-\delta}\right)$ and $\Pi_{N_i} \phi$ converges strongly to $\phi$ in $H^{3+\epsilon}(\mathbb{T}^{2})$. Because of (\ref{commutatorestimate}), the second and third terms converge to 0 as $i\rightarrow\infty$. Since $[\Lambda, \nabla \phi] \tilde{\psi}$ is bounded in $L^2(\mathbb{T}^{2},\mathbb{R}^{2})$. To prove the convergence of the first term, it is enough to prove $\tilde{u}_{N_i}$ converges weakly to $\tilde{u}$ in $L^{2}\left(0, T ;L^2(\mathbb{T}^{2},\mathbb{R}^{2})\right)$. Equivalently, we need to prove the following lemma.
\begin{lemma}\label{weakL2}
	$\tilde{\mathbb{P}}\text{-a.s.}$, the process $\tilde{\omega}_{N_i}$ converges weakly to $\tilde{\omega}$ in $L^{2}\left(0, T ;L^2(\mathbb{T}^{2})\right)$.
	\begin{proof}
		Since $\{\tilde{\omega}_{N_i}\}_{i\geq1}$ converges strongly to $\tilde{\omega}$ in $C\left([0, T] ;H^{-\delta}(\mathbb{T}^{2})\right)$. Also, $\tilde{\omega}_{N_i}$ and $\tilde{\omega}$ both have $L^2$ trajectories. For any $\phi\in H^{\delta}$,
		\begin{equation}\label{hdelta}
			\begin{aligned}
				\sup _{t\in[0,T]}\left|\int_{\mathbb{T}^{2}}\left(\tilde{\omega}_{N_i}(t)-\tilde{\omega}(t)\right)\phi\d x\right|\leq&	\sup _{t\in[0,T]}\left\|\tilde{\omega}_{N_i}(t)-\tilde{\omega}(t)\right\|_{H^{-\delta}}\left\|\phi\right\|_{H^{\delta}}\\& =\left\|\tilde{\omega}_{N_i}-\tilde{\omega}\right\|_{C\left([0, T] ; H^{-\delta}(\mathbb{T}^2)\right)}\left\|\phi\right\|_{H^{\delta}}
			\end{aligned}
		\end{equation}
		converges to 0 as $i\rightarrow\infty$. Fixing $\phi^{\prime}\in L^2$, for any $\epsilon>0$, there exists $\phi\in H^{\delta}$, such that $\|\phi-\phi^{\prime}\|_{L^2}<\epsilon$, then
		\begin{align*}
			\sup _{t\in[0,T]}\left|\int_{\mathbb{T}^{2}}\left(\tilde{\omega}_{N_i}(t)-\tilde{\omega}(t)\right)\phi^{\prime} \mathrm{d} x\right| \leq \sup _{t\in[0,T]}\left|\int_{\mathbb{T}^{2}}\left(\tilde{\omega}_{N_i}(t)-\tilde{\omega}(t)\right)(\phi^{\prime}-\phi)\mathrm{d} x\right|+\sup _{t\in[0,T]}\left|\int_{\mathbb{T}^{2}}\left(\tilde{\omega}_{N_i}(t)-\tilde{\omega}(t)\right)\phi \mathrm{d} x\right|.
		\end{align*}
		Therefore, (\ref{hdelta}) implies that
		\begin{align*}
			\varlimsup_{i\rightarrow\infty}\sup _{t\in[0,T]}\left|\int_{\mathbb{T}^{2}}\left(\tilde{\omega}_{N_i}(t)-\tilde{\omega}(t)\right)\phi^{\prime} \mathrm{d} x\right| \leq 2\|\omega_{0}\|_{L^2}\|\phi^{\prime}-\phi\|_{L^2}<2\epsilon\|\omega_{0}\|_{L^2}.
		\end{align*}
		As a result, $\tilde{\mathbb{P}}\text{-a.s.}$, $\tilde{\omega}_{N_i}$ converges weakly to $\tilde{\omega}$ in $L^{2}\left(0, T ;L^2(\mathbb{T}^{2})\right)$.
	\end{proof}
\end{lemma}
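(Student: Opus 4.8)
The goal is to show that $\tilde{\mathbb{P}}$-a.s., for every $\Phi\in L^2(0,T;L^2)$ one has $\int_0^T\langle\tilde{\omega}_{N_i}(t)-\tilde{\omega}(t),\Phi(t)\rangle\,\d t\to 0$ as $i\to\infty$. The two facts I would exploit are already in hand: the Skorokhod construction gives $\tilde{\mathbb{P}}$-a.s. strong convergence $\tilde{\omega}_{N_i}\to\tilde{\omega}$ in $C([0,T];H^{-\delta})$, while \eqref{newl2ineq} and Lemma \ref{trajectories} provide the uniform mass bound $\sup_t\|\tilde{\omega}_{N_i}(t)\|_{L^2}\le\|\omega_0\|_{L^2}$ and $\sup_t\|\tilde{\omega}(t)\|_{L^2}\le\|\omega_0\|_{L^2}$. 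In particular the family is bounded in $L^\infty(0,T;L^2)\subset L^2(0,T;L^2)$, so by a standard $3\epsilon$ argument it suffices to verify convergence of the pairing against a dense subset of test functions.

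First I would test against the smoother functions $\phi\in H^\delta$. Here strong convergence in $C([0,T];H^{-\delta})$ together with the $H^{-\delta}$--$H^\delta$ duality gives immediately
\begin{equation*}
\sup_{t\in[0,T]}\bigl|\langle\tilde{\omega}_{N_i}(t)-\tilde{\omega}(t),\phi\rangle\bigr|\le\bigl\|\tilde{\omega}_{N_i}-\tilde{\omega}\bigr\|_{C([0,T];H^{-\delta})}\,\|\phi\|_{H^\delta}\longrightarrow 0,
\end{equation*}
which is precisely the estimate \eqref{hdelta}.

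Next comes the density step, which upgrades the class of admissible test functions from $H^\delta$ to all of $L^2$. Given $\phi'\in L^2$ and $\epsilon>0$, I would choose $\phi\in H^\delta$ with $\|\phi-\phi'\|_{L^2}<\epsilon$ (possible since $H^\delta$ is dense in $L^2$), write $\langle\tilde{\omega}_{N_i}-\tilde{\omega},\phi'\rangle=\langle\tilde{\omega}_{N_i}-\tilde{\omega},\phi'-\phi\rangle+\langle\tilde{\omega}_{N_i}-\tilde{\omega},\phi\rangle$, bound the first pairing by $2\|\omega_0\|_{L^2}\|\phi'-\phi\|_{L^2}$ using the uniform mass bound, and let the second vanish by the previous step. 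Letting $\epsilon\to0$ yields $\sup_t|\langle\tilde{\omega}_{N_i}(t)-\tilde{\omega}(t),\phi'\rangle|\to0$ for every $\phi'\in L^2$. Finally, testing against product functions $g\otimes\phi'$ with $g\in C([0,T])$ and $\phi'\in L^2$, integrating in time, and invoking the density of such tensor products in $L^2(0,T;L^2)$ together with the $L^2(0,T;L^2)$ boundedness, I obtain the claimed weak convergence.

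The main, though mild, obstacle is exactly this mismatch of topologies: strong convergence is available only in the loss-of-regularity space $H^{-\delta}$, so one cannot pair directly against $L^2$ elements. The role of the uniform $L^2$ bound is to guarantee that no mass escapes in the limit and thereby to let the density argument transfer the convergence to $L^2$ test functions; everything else is routine.
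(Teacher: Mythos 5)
Your proposal follows essentially the same route as the paper: use the $H^{-\delta}$--$H^{\delta}$ duality with the strong convergence in $C([0,T];H^{-\delta})$ for test functions in $H^{\delta}$, then upgrade to arbitrary $L^2$ test functions by density and the uniform bound $\sup_t\|\tilde{\omega}_{N_i}(t)\|_{L^2}\le\|\omega_0\|_{L^2}$. Your final step, passing from time-independent test functions to general elements of $L^{2}(0,T;L^{2})$ via tensor products, is a detail the paper leaves implicit, and it is a welcome addition rather than a departure.
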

Then, $\tilde{\mathbb{P}}\text{-a.s.}$,  for any $t\in\left[0,T\right]$.
\begin{align*}
	\int_{0}^{t}\left\langle\tilde{\omega}_{N_{i}}(s), \tilde{u}_{N_{i}}(s) \cdot \nabla\left(\Pi_{N_{i}} \phi\right)\right\rangle \mathrm{d} s \longrightarrow \int_{0}^{t}\left\langle\tilde{\omega}(s), \tilde{u}(s) \cdot \nabla\phi\right\rangle \mathrm{d} s \quad \text{as} \quad i\rightarrow\infty.
\end{align*}

Therefore, letting $i \rightarrow \infty$ in (\ref{Nisolution}), we obtain for all $t \in[0, T]$,
\begin{equation}\begin{aligned}
		\langle\tilde{\omega}(t), \phi\rangle
		=&\langle\omega_0, \phi\rangle+\int_{0}^{t}\langle\tilde{\omega}(s), \tilde{u}(s) \cdot \nabla \phi\rangle \mathrm{d} s+\nu \int_{0}^{t}\langle\tilde{\omega}(s), \Delta \phi\rangle \mathrm{d} s \\
		&-\frac{2 \sqrt{\nu}}{\|\theta\|_{\ell^{2}}} \sum_{k \in \mathbb{Z}_{0}^{2}} \theta_{k} \int_{0}^{t}\left\langle\tilde{\omega}(s), \sigma_{k} \cdot \nabla \phi\right\rangle \mathrm{d} \tilde{B}_{s}^{k}. %\quad\tilde{\mathbb{P}}\text{-a.s.}
\end{aligned}\end{equation}
This finishes the proof of Theorem \ref{main}.
%\end{itemize}

\subsection{The Proof of Scaling Limit Theorem \ref{scalingtheoremSQG}}

\hspace{1.5em}Similar to (\ref{solution}), the solution to equation (\ref{SSQG}) is understood as follows: for any $\phi \in C^{\infty}\left(\mathbb{T}^{2}\right)$ and $t \in$ $[0, T]$,
\begin{equation}\label{solutionN}
	\begin{aligned}
		\left\langle\omega^{N}(t), \phi\right\rangle=&\left\langle\omega_{0}^{N}, \phi\right\rangle+\int_{0}^{t}\left\langle\omega^{N}(s), u^{N}(s) \cdot \nabla \phi\right\rangle \mathrm{d} s+\nu \int_{0}^{t}\left\langle\omega^{N}(s), \Delta \phi\right\rangle \mathrm{d} s \\
		&-\frac{2 \sqrt{\nu}}{\|\theta^{N}\|_{\ell^{2}}} \sum_{k \in \mathbb{Z}_{0}^{2}} \theta_{k}^{N} \int_{0}^{t}\left\langle\omega^{N}(s), \sigma_{k} \cdot \nabla \phi\right\rangle \mathrm{d} B_{s}^{k}.
	\end{aligned}
\end{equation}
By Theorem \ref{main}, there exists a weak solution $\omega^{N}$ to (\ref{solutionN}) satisfying
\begin{align}\label{finiteL2ineqN}
	\sup _{t \in[0, T]}\left\|\omega^{N}(t)\right\|_{L^{2}} \leq\left\|\omega_{0}^{N}\right\|_{L^{2}}\leq C \quad \mathbb{P} \text {-a.s.}
\end{align}
where the uniform bound of $\{\omega_0^N\}$ is obtained by the assumption in Theorem \ref{scalingtheoremSQG}. Although the process $\omega^{N}$ might be defined on different probability spaces, for simplicity, we do not distinguish the notations $\Omega, \mathbb{P}, \mathbb{E},$ etc. Let $Q^N$ denotes the law of $\omega^N$, we are going to show that $\left\{Q^{N}\right\}_{N \geq 1}$ is tight on $C\left([0,T],H^{-\delta}\right)$ for some $\delta\in(0,1)$. 
%Similarly as the arguments below  in the last subsection.
By the compact embedding assertion (i) of Theorem \ref{embedding}, it is sufficient to prove
\begin{align}\label{uniformestimateomegaN}
	\sup _{N \geq 1} \mathbb{E} \int_{0}^{T}\left\|\omega^{N}(t)\right\|_{L^2}^{p} \mathrm{d} t+\sup _{N \geq 1} \mathbb{E} \int_{0}^{T} \int_{0}^{T} \frac{\left\|\omega^{N}(t)-\omega^{N}(s)\right\|_{H^{-\beta}}^{4}}{|t-s|^{7 / 3}} \mathrm{d} t \mathrm{d} s<\infty.
\end{align}
%The proof is paralleled to (\ref{boundedness}) in subsection 2.1.
%\begin{comment}
\begin{lemma}\label{lemma1N}
	There is a constant $C=C(\nu,T,\left\|\omega_{0}\right\|_{L^2})$ such that for any $N \geq 1$ and $0 \leq s<t \leq T$,
	\begin{align*}
		\mathbb{E}\left(\left\langle\omega^{N}(t)-\omega^{N}(s), e_{k}\right\rangle^{4}\right) \leq C|k|^{8}|t-s|^{2}, \quad \text { for all } k \in \mathbb{Z}_{0}^{2}.
	\end{align*}
\end{lemma}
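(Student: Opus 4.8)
The plan is to mirror the proof of Lemma \ref{lemma1} almost verbatim, the only genuinely new issue being to keep every constant \emph{uniform in $N$}. Testing the weak formulation (\ref{solutionN}) against $\phi=e_k$ and subtracting the values at times $s$ and $t$ gives, for $0\le s<t\le T$, the decomposition
\begin{equation*}
\langle\omega^N(t)-\omega^N(s),e_k\rangle=I_1+I_2+I_3,
\end{equation*}
where
\begin{equation*}
I_1=\int_s^t\langle\omega^N(r),u^N(r)\cdot\nabla e_k\rangle\,\mathrm{d}r,\qquad I_2=\nu\int_s^t\langle\omega^N(r),\Delta e_k\rangle\,\mathrm{d}r,
\end{equation*}
and $I_3=-\frac{2\sqrt\nu}{\|\theta^N\|_{\ell^2}}\sum_{l}\theta^N_l\int_s^t\langle\omega^N(r),\sigma_l\cdot\nabla e_k\rangle\,\mathrm{d}B^l_r$. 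Since $(a+b+c)^4\le 27(a^4+b^4+c^4)$, it suffices to bound $\mathbb{E}(I_j^4)$ for $j=1,2,3$ separately and then combine.

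For the two drift terms I would apply Hölder's inequality in time, $|I_j|^4\le|t-s|^3\int_s^t(\cdots)^4\,\mathrm{d}r$, together with the a priori bound (\ref{finiteL2ineqN}). For $I_1$ the pointwise estimate $|\langle\omega^N,u^N\cdot\nabla e_k\rangle|\le\|\omega^N\|_{L^2}\|u^N\|_{L^2}\|\nabla e_k\|_\infty$, the identity $\|u^N\|_{L^2}=\|\omega^N\|_{L^2}$ (since $u^N=K_0\ast\omega^N=-\nabla^\perp\Lambda^{-1}\omega^N$ is a Fourier multiplier of unit modulus on each mode), and $\|\nabla e_k\|_\infty\le C|k|$ from (\ref{factofbasis}) yield $\mathbb{E}(I_1^4)\le C|k|^4|t-s|^4$. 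For $I_2$ the identical computation with $\|\Delta e_k\|_\infty\le C|k|^2$ from (\ref{factofbasis}) gives $\mathbb{E}(I_2^4)\le C|k|^8|t-s|^4$.

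The martingale term $I_3$ is where the dependence on $\theta^N$ enters, and it is the only step that needs care. I would apply the Burkholder--Davis--Gundy inequality to reduce $\mathbb{E}(I_3^4)$ to the second moment of the quadratic variation, then use Bessel's inequality for the orthonormal family $\{\sigma_l\}$ in $L^2(\mathbb{T}^2,\mathbb{R}^2)$ to bound, pointwise in $r$,
\begin{equation*}
\sum_{l}(\theta^N_l)^2\langle\omega^N(r),\sigma_l\cdot\nabla e_k\rangle^2\le\|\theta^N\|_{\ell^\infty}^2\|\omega^N(r)\nabla e_k\|_{L^2}^2\le C\|\theta^N\|_{\ell^\infty}^2|k|^2\|\omega_0\|_{L^2}^2.
\end{equation*}
Inserting this and the prefactor $16\nu^2/\|\theta^N\|_{\ell^2}^4$ produces $\mathbb{E}(I_3^4)\le C\nu^2\frac{\|\theta^N\|_{\ell^\infty}^4}{\|\theta^N\|_{\ell^2}^4}|k|^4\|\omega_0\|_{L^2}^4|t-s|^2$. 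The decisive observation is that $\|\theta^N\|_{\ell^\infty}\le\|\theta^N\|_{\ell^2}$ holds trivially for every $N$, so this ratio is at most $1$ and $\mathbb{E}(I_3^4)\le C|k|^4|t-s|^2$ with $C$ independent of $N$; notably, the stronger scaling hypothesis (\ref{theta-N}) plays no role at this stage.

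Finally, summing the three bounds and using $|t-s|^4\le T^2|t-s|^2$ on $[0,T]$ to absorb the higher powers of $|t-s|$, the $|k|^8$ contribution from $I_2$ dominates and I obtain $\mathbb{E}(\langle\omega^N(t)-\omega^N(s),e_k\rangle^4)\le C|k|^8|t-s|^2$ with $C=C(\nu,T,\|\omega_0\|_{L^2})$ uniform in $N$, as claimed. I do not anticipate any real obstacle: the argument is structurally identical to Lemma \ref{lemma1}, and the sole delicate point, namely the $N$-uniformity of the martingale estimate, is guaranteed by the elementary inequality between the $\ell^\infty$- and $\ell^2$-norms rather than by the scaling condition (\ref{theta-N}).
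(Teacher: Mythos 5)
Your proposal is correct and follows essentially the same route as the paper: the same three-term decomposition from (\ref{solutionN}), H\"older in time plus (\ref{finiteL2ineqN}) for the drift terms, and Burkholder--Davis--Gundy with the Bessel inequality for the martingale term. The only (cosmetic) difference is that the paper justifies the uniform bound on $\|\theta^N\|_{\ell^\infty}/\|\theta^N\|_{\ell^2}$ by appealing to (\ref{theta-N}), whereas you correctly observe that the trivial inequality $\|\theta^N\|_{\ell^\infty}\leq\|\theta^N\|_{\ell^2}$ already suffices, so the scaling hypothesis is not needed at this step.
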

\begin{proof}
	For any fixed $k$, by (\ref{solutionN}), we have
	\begin{equation}\label{le1N}
		\begin{aligned}
			\left\langle\omega^{N}(t)-\omega^{N}(s), e_{k}\right\rangle=& \int_{s}^{t}\left\langle\omega^{N}(r), u^{N}(r) \cdot \nabla e_{k}\right\rangle \mathrm{d} r+\nu \int_{s}^{t}\left\langle\omega^{N}(r), \Delta e_{k}\right\rangle \mathrm{d} r \\
			&-\frac{2 \sqrt{\nu}}{\|\theta^{N}\|_{\ell^{2}}} \sum_{l \in \mathbb{Z}_{0}^{2}} \theta^{N}_{l} \int_{s}^{t}\left\langle\omega^{N}(r), \sigma_{l} \cdot \nabla e_{k}\right\rangle \mathrm{d} B_{r}^{l}.
		\end{aligned}
	\end{equation}
	Similarly to calculations in the proof of Lemma \ref{lemma1}, we can obtain the estimate of the first and the second terms on the right-hand side. By the Burkholder-Davis-Gundy inequality,
	\begin{equation*}
		\begin{aligned}
			\mathbb{E}\left(\left|\frac{2 \sqrt{\nu}}{\|\theta^{N}\|_{\ell^{2}}} \sum_{l \in \mathbb{Z}_{0}^{2}} \theta^N_{l} \int_{s}^{t}\left\langle\omega^{N}(r), \sigma_{l} \cdot \nabla e_{k}\right\rangle d B_{r}^{l}\right|^{4}\right) & \leq C \frac{16 \nu^2}{\|\theta^{N}\|^{4}_{\ell^{2}}}\|\theta^N\|_{\ell^{\infty}}^{4}|k|^{4}\left\|\omega_{0}^N\right\|_{L^{2}}^{4}|t-s|^{2}  .
		\end{aligned}
	\end{equation*}
	We obtain from (\ref{theta-N}) that the sequence $\{\frac{\left\|\theta^{N}\right\|_{\ell^{\infty}}}{\|\theta^{N}\|_{\ell^{2}}}\}_{N\geq1}$ is bounded and notice that $\|\omega^N_0\|_{L^2}$ has uniform bound. Therefore, the above inequality can be controlled by $ C^{\prime}|k|^{4}|t-s|^{2}$. Combining the estimate of three terms on the right-hand side, we finishes the proof of the desired inequality.
\end{proof}
Using Lemma \ref{lemma1N}, we can prove the uniform estimate \eqref{uniformestimateomegaN} by repeating the calculations below Lemma \ref{lemma1}. Therefore, the laws $\left\{Q^{N}\right\}_{N \geq 1}$ is tight on $C\left([0,T],H^{-\delta}\right)$ for some $\delta\in(0,1)$. The proof of Theorem \ref{scalingtheoremSQG} is similar to the proof of Theorem \ref{main} and we will show some key details.
\begin{proof}
	 By Prohorov's theorem and Skorokhod's representation theorem, there is a family of processes $\left(\hat{\omega}^{N_{i}}, \hat{B}^{N_{i}}\right)$ defined on some new probability space $(\hat{\Omega}, \hat{\mathcal{F}}, \hat{\mathbb{P}})$ such that
	\begin{itemize}
		\item[(i)] $\left(\hat{\omega}^{N_{i}}, \hat{B}^{N_{i}}\right)$ has the same law as $\left(\omega^{N_{i}}, B^{N_{i}}\right)$ for any $i \geq 1$;
		\item[(ii)]$\hat{\mathbb{P}}$-a.s., $\left(\hat{\omega}^{N_{i}}, \hat{B}^{N_{i}}\right)$ converges to
		$(\hat{\omega}, \hat{B})$ in the topology of $C\left([0, T] ; H^{-\delta}\right) \times C\left([0, T], \mathbb{R}^{\mathbb{Z}_{0}^{2}}\right)$.
	\end{itemize}
	
	Thus, equation (\ref{solutionN}) implies that for any $\phi \in C^{\infty}\left(\mathbb{T}^{2}\right)$,  for all $t \in[0, T]$, the following  equation holds $\hat{\mathbb{P}}$-a.s.:
	\begin{equation}\label{solutionNi}
		\begin{aligned}
			\left\langle\hat{\omega}^{N_{i}}(t), \phi\right\rangle
			=&\left\langle\omega^{N_i}_0,\phi\right\rangle+\int_{0}^{t}\left\langle\hat{\omega}^{N_{i}}(s), \hat{u}^{N_{i}}(s) \cdot \nabla \phi\right\rangle \mathrm{d} s+\nu \int_{0}^{t}\left\langle\hat{\omega}^{N_{i}}(s), \Delta \phi\right\rangle \mathrm{d} s \\
			&-\frac{2 \sqrt{\nu}}{\|\theta^{N_i}\|_{\ell^{2}}} \sum_{k \in \mathbb{Z}_{0}^{2}} \theta_{k}^{N_{i}} \int_{0}^{t}\left\langle\hat{\omega}^{N_{i}}(s), \sigma_{k} \cdot \nabla \phi\right\rangle \mathrm{d} \hat{B}_{s}^{N_{i}, k},
		\end{aligned}
	\end{equation}
	where $\hat{u}^{N_{i}}=K_{0} * \hat{\omega}^{N_{i}}$ are the velocity fields on the new probability space $(\hat{\Omega}, \hat{\mathcal{F}}, \hat{\mathbb{P}})$; moreover, a priori estimate holds:
	\begin{equation}\label{L2ineqNi}
		\sup _{t \in[0, T]}\left\|\hat{\omega}^{N_{i}}(t	)\right\|_{L^{2}} \leq\left\|\omega_{0}^{N_i}\right\|_{L^{2}}\leq C \quad \hat{\mathbb{P}} \text {-a.s.}
	\end{equation}
	The convergence of $\left\langle\omega_{0}^{N_i}, \phi\right\rangle$ is due to the assumption in Theorem \ref{scalingtheoremSQG}; the convergence of the nonlinear term can be treated in the same way as in the proof Theorem \ref{main}; and the convergence of the other terms in the first line of (\ref{solutionNi}) is easy to show. We only need to deal with the convergence of stochastic integral term. Actually, by It\^{o} isometry,
	\begin{equation*}\begin{aligned}
			\hat{\mathbb{E}}\left[\left(\frac{2 \sqrt{\nu}}{\|\theta^{N_i}\|_{\ell^{2}}}\sum_{k \in \mathbb{Z}_{0}^{2}} \theta_{k}^{N_{i}} \int_{0}^{t}\left\langle\hat{\omega}^{N_{i}}(s), \sigma_{k} \cdot \nabla \phi\right\rangle \mathrm{d} \hat{B}_{s}^{N_{i}, k}\right)^{2}\right] \\
			=\frac{4\nu}{\|\theta^{N_i}\|^2_{\ell^{2}}}\hat{\mathbb{E}}\left[\sum_{k \in \mathbb{Z}_{0}^{2}}\left(\theta_{k}^{N_{i}}\right)^{2} \int_{0}^{t}\left\langle\hat{\omega}^{N_{i}}(s), \sigma_{k} \cdot \nabla \phi\right\rangle^{2} \mathrm{d} s\right] \\
			\leq \frac{4\nu}{\|\theta^{N_i}\|^2_{\ell^{2}}}\left\|\theta^{N_{i}}\right\|_{\ell^{\infty}}^{2} \hat{\mathbb{E}} \int_{0}^{t} \sum_{k \in \mathbb{Z}_{0}^{2}}\left\langle\hat{\omega}^{N_{i}}(s), \sigma_{k} \cdot \nabla \phi\right\rangle^{2} \mathrm{d} s.
	\end{aligned}\end{equation*}
	By (\ref{L2ineqNi}) and using again the fact that $\{\sigma_{k}\}_{k\in\Z_0^2}$ is an incomplete orthonomal family in $L^2(\T^2,\R^2)$, we have
	\begin{equation*}
		\sum_{k \in \mathbb{Z}_{0}^{2}}\left\langle\hat{\omega}^{N_{i}}(s), \sigma_{k} \cdot \nabla \phi\right\rangle^{2} \leq\left\|\hat{\omega}^{N_{i}}(s) \nabla \phi\right\|_{L^{2}}^{2} \leq\|\nabla \phi\|_{\infty}^{2}\left\|\omega_{0}^{N_i}\right\|_{L^{2}}^{2}\leq C\|\nabla \phi\|_{\infty}^{2} \quad \hat{\mathbb{P}}\text{-a.s.}
	\end{equation*}
	Combining this fact with the condition (\ref{theta-N}), we conclude that, as $i \rightarrow \infty$ equation (\ref{solutionNi}) converges to
	\begin{equation*}
		\left\langle\hat{\omega}(t), \phi\right\rangle=\left\langle\omega_{0}, \phi\right\rangle+\int_{0}^{t}\left\langle\hat{\omega}(s), \hat{u}(s) \cdot \nabla \phi\right\rangle \mathrm{d} s+\nu \int_{0}^{t}\left\langle\hat{\omega}(s), \Delta \phi\right\rangle \mathrm{d} s.
	\end{equation*}
	This equation holds for all $\phi \in C^{\infty}\left(\mathbb{T}^{2}\right)$ and $t \in[0, T] .$ That is, $\hat{\omega}$ solves the dissipative SQG equation (\ref{dissipativeSQG}) in the weak sense. Because this equation admits a unique solution with initial data $\omega_{0} \in L^{2}$ (Theorem \ref{existenceanduniquenessforSQG}), we draw a conclusion that the whole sequence of equations (\ref{SSQG}) converges to the limit equation (\ref{dissipativeSQG}).
\end{proof}
Moreover, let us present a stronger result about solution to equation \eqref{SSQG}; moreover, given $\theta^N\in\ell^2$, we denote by
\begin{center}
	$\mathcal{C}_{\theta^{N}}(\omega_{0})$= collection of laws of weak solutions $\omega$ to (\ref{SSQG}) with initial data $\omega_{0}$.
\end{center}
Denote the space $\mathcal{X}=C\left([0, T], H^{-\delta}\left(\mathbb{T}^{2}\right)\right)$ endowed with the usual norm $\|\cdot\|_{\mathcal{X}}$ and denote by $\Phi_{\cdot}(\omega_0)$ the unique solution to deterministic dissipative SQG equation (\ref{dissipativeSQG}) with initial data $\omega_0\in L(\mathbb{T}^{2})$. Analogous to Theorem 1.2 in \cite{LS}, we generalize the discussion to SQG equation.
\begin{theorem}\label{SQGconvergenceinProb}
	For any $R>0$ and any $\epsilon>0$, we have
	\begin{equation}
		\lim_{N \rightarrow \infty} \sup _{\left\|\omega_{0}\right\|_{L^{2}} \leq R} \sup_{Q \in \mathcal{C}_{\theta^{N}}(\omega_{0})} Q\left(\left\{\varphi \in \mathcal{X}:\left\|\varphi-\Phi _{\cdot}(\omega_{0})\right\|_{\mathcal{X}}>\varepsilon\right\}\right)=0.
	\end{equation}
\end{theorem}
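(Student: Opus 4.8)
The plan is to deduce this uniform statement from the scaling limit already proved in Theorem \ref{scalingtheoremSQG} by a contradiction argument, exploiting weak compactness of $L^2$-balls for the initial data. Suppose the conclusion fails. Then one can find $R>0$, $\varepsilon>0$, a number $\delta_0>0$, a subsequence $N_j\to\infty$, initial data $\omega_0^{N_j}$ with $\|\omega_0^{N_j}\|_{L^2}\le R$, and laws $Q^{N_j}\in\mathcal C_{\theta^{N_j}}(\omega_0^{N_j})$ such that
\[
Q^{N_j}\big(\{\varphi\in\mathcal X:\|\varphi-\Phi_\cdot(\omega_0^{N_j})\|_{\mathcal X}>\varepsilon\}\big)\ge\delta_0 \quad\text{for every }j.
\]
Since the closed ball of radius $R$ is weakly compact in the Hilbert space $L^2(\T^2)$, after passing to a further subsequence (not relabelled) I may assume $\omega_0^{N_j}\rightharpoonup\omega_0$ weakly in $L^2$ for some $\omega_0$ with $\|\omega_0\|_{L^2}\le R$.

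The key observation is that the scaling-limit machinery applies uniformly to this sequence, regardless of which weak solutions were selected. Indeed, every member of $\mathcal C_{\theta^{N_j}}(\omega_0^{N_j})$ satisfies the a priori bound \eqref{finiteL2ineqN}, so the tightness estimate (Lemma \ref{lemma1N} together with the computation following it) holds with a constant depending only on $R$; hence $\{Q^{N_j}\}_j$ is tight on $\mathcal X$. Running the proof of Theorem \ref{scalingtheoremSQG} verbatim, using \eqref{theta-N} to kill the noise contribution and the commutator identity \eqref{nonlinearterm} to pass to the limit in the nonlinearity, every weak limit point of $\{Q^{N_j}\}_j$ is the law of a weak solution of \eqref{dissipativeSQG} with initial datum $\omega_0$; by the uniqueness of that limit (Theorem \ref{existenceanduniquenessforSQG}) the entire sequence converges, $Q^{N_j}\Rightarrow\delta_{\Phi_\cdot(\omega_0)}$. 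Because the limit is the single deterministic trajectory $\Phi_\cdot(\omega_0)\in\mathcal X$, weak convergence upgrades to convergence in probability: applying the Portmanteau theorem to the closed set $\{\varphi:\|\varphi-\Phi_\cdot(\omega_0)\|_{\mathcal X}\ge\eta\}$, whose $\delta_{\Phi_\cdot(\omega_0)}$-measure is zero, yields $\lim_{j\to\infty}Q^{N_j}(\{\varphi:\|\varphi-\Phi_\cdot(\omega_0)\|_{\mathcal X}>\eta\})=0$ for every $\eta>0$.

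It remains to replace the fixed target $\Phi_\cdot(\omega_0)$ by $\Phi_\cdot(\omega_0^{N_j})$, which is a purely deterministic continuity statement: I claim $\omega_0^{N_j}\rightharpoonup\omega_0$ in $L^2$ forces $\Phi_\cdot(\omega_0^{N_j})\to\Phi_\cdot(\omega_0)$ strongly in $\mathcal X$. The deterministic solutions obey the energy bound $\sup_t\|\Phi_t(\omega_0^{N_j})\|_{L^2}\le\|\omega_0^{N_j}\|_{L^2}\le R$ (the transport nonlinearity being skew-symmetric) and the same time-increment estimates as in Lemma \ref{lemma1N} with the stochastic term absent, so by Theorem \ref{embedding}(i) the family $\{\Phi_\cdot(\omega_0^{N_j})\}_j$ is precompact in $\mathcal X$; any limit point is bounded in $L^\infty(0,T;L^2)$, solves \eqref{dissipativeSQG}, and carries the initial datum $\omega_0$ (since $\omega_0^{N_j}\rightharpoonup\omega_0$), hence equals $\Phi_\cdot(\omega_0)$ by uniqueness, so the whole sequence converges. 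Choosing $j$ large enough that $\|\Phi_\cdot(\omega_0^{N_j})-\Phi_\cdot(\omega_0)\|_{\mathcal X}<\varepsilon/2$, the triangle inequality gives the inclusion
\[
\{\varphi:\|\varphi-\Phi_\cdot(\omega_0^{N_j})\|_{\mathcal X}>\varepsilon\}\subseteq\{\varphi:\|\varphi-\Phi_\cdot(\omega_0)\|_{\mathcal X}>\varepsilon/2\},
\]
so the left-hand probability tends to $0$ along $j$, contradicting the bound $\delta_0>0$ and proving the theorem.

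The step I expect to be most delicate is guaranteeing that the scaling limit is genuinely uniform, i.e.\ that neither the tightness nor the identification of the limit depends on the particular weak solution or on the particular approximating initial data beyond its $L^2$-norm; this is the reason for emphasizing that every estimate is driven solely by the a priori bound $R$. The accompanying deterministic weak-to-strong continuity of $\omega_0\mapsto\Phi_\cdot(\omega_0)$ must also be argued through compactness and uniqueness rather than a quantitative stability estimate, since no such estimate is available for these critical models.
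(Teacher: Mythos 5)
Your proposal is correct and follows essentially the same route as the paper: a contradiction argument, weak compactness of the $L^2$-ball to extract a weakly convergent subsequence of initial data, reuse of the uniform a priori bound and tightness machinery from Theorem \ref{scalingtheoremSQG} to identify the limit law as $\delta_{\Phi_\cdot(\omega_0)}$, and finally the deterministic stability $\Phi_\cdot(\omega_0^{N_j})\to\Phi_\cdot(\omega_0)$ in $\mathcal X$ (which the paper also obtains by compactness plus uniqueness) combined with a triangle inequality. Your write-up is in fact somewhat more explicit than the paper's at the two delicate points, namely the Portmanteau upgrade from weak convergence to a Dirac mass into convergence in probability, and the compactness argument behind the continuity of $\omega_0\mapsto\Phi_\cdot(\omega_0)$.
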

\begin{proof}
	We argue by contradiction. Suppose that there exists an $\epsilon_0>0$ such that
	\begin{equation}
		\lim_{N \rightarrow \infty} \sup _{\left\|\omega_{0}\right\|_{L^{2}} \leq R} \sup_{Q \in \mathcal{C}_{\theta^N}(\omega_{0})} Q\left(\left\{\varphi \in \mathcal{X}:\left\|\varphi-\Phi _{\cdot}(\omega_{0})\right\|_{\mathcal{X}}>\varepsilon_0\right\}\right)>0.
	\end{equation}
	Then, we can choose a subsequence $\left\{\omega^{N_{i}}_0\right\}_{i \geq 1}\in L^{2}\left(\mathbb{T}^{2}\right)$ with $\displaystyle\sup _{i \geq 1}\left\|\omega_{0}^{N_{i}}\right\|_{L^{2}} \leq R$ and $Q^{N_{i}} \in \mathcal{C}_{\theta^{N_{i}}}(\omega_{0}^{N_{i}})$. And for $\varepsilon_{0}$ small enough, we have
	\begin{equation}\label{contradiction1}
		Q^{N_{i}}\left(\left\{\varphi \in \mathcal{X}:\left\|\varphi-\Phi _{\cdot}\left(\omega_{0}^{N_{i}}\right)\right\|_{\mathcal{X}}>\varepsilon_{0}\right\}\right) \geq \varepsilon_{0}, \quad i \geq 1.
	\end{equation}
	First, since $\omega_{0}^{N_i}$ is uniformly bounded, up to a subsequence, we can assume that $\omega_{0}^{N_{i}}$
	converges weakly in $L^{2}\left(\mathbb{T}^{2}\right)$ to some $\omega_{0} .$ For any $i \geq 1,$ let $\omega^{N_{i}}$ be a weak solution to (\ref{SSQG}) in the sense of Definition \ref{weaksolutionofSQG} with the initial data $\omega_{0}^{N_{i}}$. Using again the boundedness of the family $\left\{\omega_{0}^{N_{i}}\right\}_{i \geq 1}$ and applying Theorem \ref{main}, we have
	\begin{equation*}
		\sup_{i \geq 1}\sup_{t\in[0,T]}\left\|\omega^{N_{i}}\right\|_{L^{2}}  \leq C_0<\infty;
	\end{equation*}
	moreover, $\omega^{N_{i}}$ satisfies equations (\ref{solutionNi}). Therefore, we can show that,  up to a further subsequence, $\omega^{N_{i}}$ converges weakly to the unique solution $\Phi _{\cdot}(\omega_{0})$ of the deterministic system (\ref{dissipativeSQG}) with initial data $\omega_{0}$, by repeating the arguments in the proof of Theorem \ref{scalingtheoremSQG}. Also we can conclude that $\omega^{N_{i}}$ converges also in probability to $\Phi_{\cdot}(\omega_{0})$. Thus,
	\begin{equation}\label{implies1}
		\lim _{i \rightarrow \infty} Q^{N_{i}}\left(\left\{\varphi \in \mathcal{X}:\left\|\varphi-\Phi .\left(\omega_{0}\right)\right\|_{\mathcal{X}}>\varepsilon_{0}\right\}\right)=0.
	\end{equation}
	Then, for any $i \geq 1,$ recall that $\Phi_{\cdot}\left(\omega_{0}^{N_{i}}\right)$ is the unique solution to the deterministic equation (\ref{dissipativeSQG}) with initial data $\omega_{0}^{N_{i}}$. By the equations in (\ref{dissipativeSQG}) and the weak convergence of $\omega_{0}^{N_{i}}$ to $\omega_{0}$, we find that, up to a subsequence,
	$\Phi_{\cdot}\left(\omega_{0}^{N_{i}}\right)$ converges to the limit $\Phi_{\cdot}(\omega_{0})$ in
	the topology of $\mathcal{X}$, which is the solution to the equation (\ref{dissipativeSQG}) with initial data $\omega_{0}$. The contradiction to (\ref{contradiction1})  occurs.
\end{proof}
As mentioned in the introduction, to be more precise, we show the following remark.
\begin{remark}\label{distanceremark}
	The laws $Q^N$ in $\mathcal{C}_{\theta^{N}}(\omega_{0})$ can be regarded as probability measure on $\mathcal{X}$. We denote $\rho_{\mathcal{X}}(\cdot,\cdot)$ as distance on $\mathcal{X}$ which metrizes the weak convergence of probability measures. It is a corollary of Theorem \ref{SQGconvergenceinProb} that 
	\begin{equation}
		\sup _{Q_{N}, Q_{N}^{\prime} \in \mathcal{C}_{\theta^{N}}\left(\omega_{0}\right)} \rho_{\mathcal{X}}\left(Q_{N}, Q_{N}^{\prime}\right) \rightarrow 0 \quad \text { as }\quad  N \rightarrow \infty.
	\end{equation}
This result shows that the solution to \eqref{SSQG} are approximately uniqueness.
\end{remark}
\subsection{The Uniqueness of Solution to Dissipative SQG Equation}

\hspace{1.5em}For the dissipative SQG, the global existence of solution with initial data $\omega_0\in L^2(\mathbb{T}^2)$ is studied by Resnick in \cite{Resnick} and the uniqueness in this sense is studied by Constantin and Wu in \cite{constantin1999behavior}. In \cite{kiselev2007global}, a stronger result can be proven: assume the initial data is $C^{\infty}$ and replace the dissipative term $\nu\Delta\omega$ by $\nu\Lambda\omega$, then SQG equation admits a unique global smooth solution.
\begin{theorem}\label{existenceanduniquenessforSQG}
	Let $T>0$ be arbitrary. Then for any initial data $\omega_0\in L^2$, there exists a unique weak solution to
	\begin{align}\label{dissipativeSQG2}
		\partial_{t}\omega+u\cdot\nabla\omega=\nu\Delta\omega,
	\end{align}	
	which satisfies
	\begin{align*}
		\omega \in L^{\infty}\left(0, T ; L^{2}\right) \cap L^{2}\left(0, T ; H^{1}\right).
	\end{align*}
\end{theorem}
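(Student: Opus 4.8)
The plan is to get existence from a Galerkin scheme plus an energy estimate, and uniqueness from an $L^2$ stability estimate for the difference of two solutions.

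For existence I would project \eqref{dissipativeSQG2} onto $H_N$ exactly as in Subsection 3.1 but with the transport noise switched off, obtaining the finite-dimensional ODE $\frac{\d}{\d t}\omega_N=-b_N(\omega_N)+\nu\Delta\omega_N$. Testing against $\omega_N$ and using $\langle b_N(\omega_N),\omega_N\rangle=0$ from \eqref{property} gives the energy balance $\frac12\frac{\d}{\d t}\|\omega_N\|_{L^2}^2+\nu\|\nabla\omega_N\|_{L^2}^2=0$. Integrating in time yields, uniformly in $N$, both $\sup_t\|\omega_N(t)\|_{L^2}\le\|\omega_0\|_{L^2}$ and $\nu\int_0^T\|\nabla\omega_N\|_{L^2}^2\,\d t\le\frac12\|\omega_0\|_{L^2}^2$; that is, $\{\omega_N\}$ is bounded in $L^\infty(0,T;L^2)\cap L^2(0,T;H^1)$. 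A fractional-in-time bound in $W^{\gamma,2}(0,T;H^{-\beta})$ then follows from the equation, since $u_N\omega_N$ is bounded in $L^2(0,T;L^2)$ by the 2D Ladyzhenskaya inequality, whence $\div(u_N\omega_N)\in L^2(0,T;H^{-1})$ and $\nu\Delta\omega_N\in L^2(0,T;H^{-1})$. Assertion (ii) of Theorem \ref{embedding} supplies strong convergence of a subsequence in $L^2(0,T;L^2)$, which is precisely what is needed to pass to the limit in the nonlinear term (alternatively via the commutator identity \eqref{nonlinearterm}); the limit lies in the stated class, reproducing Resnick's existence result.

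For uniqueness, let $\omega_1,\omega_2$ be two solutions in $L^\infty(0,T;L^2)\cap L^2(0,T;H^1)$ with the same initial datum, and set $w=\omega_1-\omega_2$, $v=u_1-u_2=K_0*w$. Writing the nonlinearity as $\div(u\omega)$ with $u\omega\in L^2(0,T;L^2)$ shows $\partial_t\omega_j\in L^2(0,T;H^{-1})$, so $w\in L^2(0,T;H^1)$ with $\partial_t w\in L^2(0,T;H^{-1})$ and the energy identity is legitimate (Lions--Magenes). The difference solves $\partial_t w+u_1\cdot\nabla w+v\cdot\nabla\omega_2=\nu\Delta w$; pairing with $w$ and using $\div u_1=0$ to kill $\langle u_1\cdot\nabla w,w\rangle$ gives
\[
\frac12\frac{\d}{\d t}\|w\|_{L^2}^2+\nu\|\nabla w\|_{L^2}^2=-\langle v\cdot\nabla\omega_2,w\rangle=\langle v\cdot\nabla w,\omega_2\rangle,
\]
the last equality by integration by parts using $\div v=0$. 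The crux is to estimate $\langle v\cdot\nabla w,\omega_2\rangle$. Here the SQG difficulty shows up: $K_0=-\nabla^\perp\Lambda^{-1}$ has order zero, so $v$ is no smoother than $w$; its Fourier multiplier is nonetheless homogeneous of degree $0$, hence $K_0$ is bounded on $L^4(\T^2)$ by Calder\'on--Zygmund theory, giving $\|v\|_{L^4}\le C\|w\|_{L^4}$. Combining this with H\"older and $\|w\|_{L^4}^2\le C\|w\|_{L^2}\|\nabla w\|_{L^2}$ (Ladyzhenskaya, using the zero-mean Poincar\'e inequality), I bound $|\langle v\cdot\nabla w,\omega_2\rangle|\le\|\omega_2\|_{L^4}\|v\|_{L^4}\|\nabla w\|_{L^2}\le C\|\omega_2\|_{L^4}\|w\|_{L^2}^{1/2}\|\nabla w\|_{L^2}^{3/2}$. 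Young's inequality absorbs $\|\nabla w\|_{L^2}^{3/2}$ into $\nu\|\nabla w\|_{L^2}^2$ at the cost of $C_\nu\|\omega_2\|_{L^4}^4\|w\|_{L^2}^2$, and since $\int_0^T\|\omega_2\|_{L^4}^4\,\d t\le C\|\omega_2\|_{L^\infty_tL^2}^2\int_0^T\|\nabla\omega_2\|_{L^2}^2\,\d t<\infty$, I obtain $\frac{\d}{\d t}\|w\|_{L^2}^2\le g(t)\|w\|_{L^2}^2$ with $g\in L^1(0,T)$; Gr\"onwall and $w(0)=0$ force $w\equiv0$.

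The main obstacle is exactly this nonlinear term: because the velocity is no more regular than $\omega$ (unlike 2D Euler, where Biot--Savart gains a derivative), one cannot control $\|v\|_{L^4}$ by $\|w\|_{L^2}$ and must instead spend the full $H^1$-regularity coming from the Laplacian dissipation. It is precisely the subcriticality of $\nu\Delta$ relative to the SQG nonlinearity that makes both the Young-type absorption into $\nu\|\nabla w\|_{L^2}^2$ and the $L^1_t$-integrability of $\|\omega_2\|_{L^4}^4$ available; in the genuinely critical case this argument would break down.
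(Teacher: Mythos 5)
Your proposal is correct and follows essentially the same route as the paper: an $L^2$ energy estimate for the difference of two solutions whose key point is that $K_0$ has order zero, so the velocity difference is controlled in $L^4$ by the $H^{1/2}$ (equivalently, Ladyzhenskaya) norm of $\bar\omega$, the resulting trilinear term is absorbed into $\nu\|\nabla\bar\omega\|_{L^2}^2$ by Young's inequality, and the leftover Gronwall weight is integrable thanks to the $L^2(0,T;H^1)$ regularity supplied by the dissipation. The only cosmetic differences are that you perform one extra integration by parts (so your weight is $\|\omega_2\|_{L^4}^4$ rather than the paper's $\|\nabla\omega_A\|_{L^2}^2$, both in $L^1(0,T)$) and that you spell out the Galerkin existence argument, which the paper delegates to Resnick and Constantin--Wu.
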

%The proof follows the idea in \cite{constantin1999behavior}.
To prove the uniqueness, we first introduce the following space
\begin{equation}\label{H}
	\mathcal{H}=\left\{f \in L^{2}\left(0, T ; H^{1}\left(\mathbb{T}^{2}\right)\right): \partial_{t} f \in L^{2}\left(0, T ; H^{-1}\left(\mathbb{T}^{2}\right)\right)\right\},
\end{equation}
where the derivative of time is understood in the distributional sense. $\mathcal{H}$ is continuously embedded into $C\left([0, T], L^{2}\left(\mathbb{T}^{2}\right)\right)$ and compactly embedded in $L^{2}\left([0, T], H^{m}\left(\mathbb{T}^{2}\right)\right)$ for any $m<-1$ (cf. Lemma 2.1.5 in \cite{kuksin2012mathematics}). Also, by (2.10) of \cite{kuksin2012mathematics} for any $f \in \mathcal{H}$ and $t \in[0, T]$:
\begin{equation}\label{derivatieoftime}
	\int_{0}^{t}\left\langle f, \partial_{s} f\right\rangle \mathrm{d} s=\frac{1}{2}\left(\left\|f_{t}\right\|_{L^{2}}^{2}-\left\|f_{0}\right\|_{L^{2}}^{2}\right).
\end{equation}
(\ref{derivatieoftime}) implies that $t \mapsto\left\|f_{t}\right\|_{L^{2}}^{2}$ is an absolutely continuous function and
$\frac{\mathrm{d}}{\mathrm{d} t}\left\|f_{t}\right\|_{L^{2}}^{2}=2\left\langle f, \partial_{t} f\right\rangle$. The original equation (\ref{dissipativeSQG2}) and a priori estimate imply that $\omega\in\mathcal{H}$.

%%开始修改
\begin{proof}
	Let $\omega_A,\omega_B$ be two solutions to the equation \eqref{dissipativeSQG2} with the same initial data $\omega_0$. Denote by $u_{A}$ and $u_{B}$ the corresponding velocities respectively. The difference $\bar{\omega}:=\omega_A-\omega_B$ between two solutions $\omega_A$ and $\omega_B$ satisfies
	\begin{align}\label{difference}
		\partial_{t}\bar{\omega}+\bar{u}\cdot\nabla\omega_A+u_B\cdot\nabla\bar{\omega}-\nu\Delta\bar{\omega}=0.
	\end{align}
 where $\bar{u}=u_A-u_B$. We obtain from \eqref{derivatieoftime} that
	\begin{equation*}
		\frac{1}{2}\frac{\mathrm{d}}{\mathrm{d}t}\|\bar{\omega}\|^{2}_{L^2}+\nu\|\nabla\bar{\omega}\|^{2}_{L^{2}}=-\< \bar{u} \cdot \nabla \omega_A,\bar{\omega}\>,
	\end{equation*}
where we have used the fact that $u_{B}$ is divergence-free.	For the right-hand side, by the Sobolev embedding $H^{\frac{1}{2}}(\T^2) \hookrightarrow L^{4}(\T^2)$, we have
\begin{equation}\label{uomega1}
	\begin{aligned}
	\left| \< \bar{u} \cdot \nabla \omega_A,\bar{\omega}\>\right|
	\leq&\|\bar{u}\|_{L^4}\|\nabla\omega_A\|_{L^2}\|\bar{\omega}\|_{L^4}
	\\\leq&C\|\bar{u}\|_{H^{\frac{1}{2}}}\|\nabla\omega_A\|_{L^2}\|\bar{\omega}\|_{H^{\frac{1}{2}}}
	\\\leq&C\|\nabla\omega_A\|_{L^2}\|\bar{\omega}\|^2_{H^{\frac{1}{2}}},
	\end{aligned}
\end{equation}
where the last inequality is due to the same regularity of $\bar{u}$ and $\bar{\omega}$. Using interpolation inequality $\|\bar{\omega}\|_{H^{\frac{1}{2}}}\leq C\|\bar{\omega}\|^{\frac{1}{2}}_{L^{2}}\|\bar{\omega}\|^{\frac{1}{2}}_{H^{1}}$, we have
\begin{equation}\label{uomega2}
	\begin{aligned}
	\left| \< \bar{u} \cdot \nabla \omega_A,\bar{\omega}\>\right|
	\leq&C\|\nabla\omega_A\|_{L^2}\|\bar{\omega}\|_{L^{2}}\|\bar{\omega}\|_{H^{1}}
	\\\leq&C\|\nabla\omega_A\|_{L^2}\|\bar{\omega}\|_{L^{2}}\|\nabla\bar{\omega}\|_{L^2}
	\\\leq&\epsilon\|\nabla\bar{\omega}\|^2_{L^2}+C_{\epsilon}\|\nabla\omega_A\|^2_{L^2}\|\bar{\omega}\|^2_{L^2}.
	\end{aligned}
\end{equation}
We take $\epsilon=\nu$, then 
	\begin{align*}
		\frac{1}{2}\frac{\mathrm{d}}{\mathrm{d}t}\|\bar{\omega}\|^{2}_{L^2}\leq C_{\nu}\|\nabla\omega_A\|^{2}_{L^2}\|\bar{\omega}\|^{2}_{L^2}.
	\end{align*}
   Since $\omega_A\in L^2(0,T;H^1)$,	by Gronwall's inequality, it implies that $\left\|\bar{\omega}\right\|_{L^{2}}^{2}=0$ for all $t \in[0, T]$. Therefore, $\omega_{A}=\omega_{B}$, and the uniqueness of solution holds.
\end{proof}

\section{The Scaling Limit of Stochastic Boussinesq Equations}
\hspace{1.5em}We divide this section into three subsections: the proof of existence of weak solution to critical Boussinesq equations is given in subsection 4.1; the next subsection 4.2 is devoted to prove main scaling theorem of critical Boussinesq equations i.e. Theorem \ref{scalingresultforBoussinesq}; the purpose of subsection 4.3 is to show the uniqueness of solution to deterministic viscous Boussinesq equations.
\subsection{Existence of the Weak Solutions}
\hspace{1.5em}We first explain the meaning of weak solutions to (\ref{itoboussinesq}).
\begin{definition}\label{weaksolutionofboussinesq}
	We say that equations (\ref{itoboussinesq}) have a weak solution if there exists a filtered probability space $\left(\Xi, \mathcal{F}, \mathcal{F}_{t}, \mathbb{P}\right)$, a sequence of independent $\mathcal{F}_{t}$-Brownian motions $\left\{B^{k}\right\}_{k \in \mathbb{Z}_{0}^{2}}$ and $\mathcal{F}_{t}$-progressively measurable processes $\xi,\omega \in L^{2}\left(\Xi, L^{2}\left(0, T ; L^{2}\right)\right)$ with $\mathbb{P}$-a.s. weakly continuous trajectories such that for any $\phi \in C^{\infty}\left(\mathbb{T}^{2}\right),$ the following equalities holds $\mathbb{P}$-a.s. for all $t \in[0, T]$,
	\begin{equation}\begin{aligned}\label{boussinesqsolutionpart1}
			\left\langle\xi(t), \phi\right\rangle=&\left\langle\xi_{0}, \phi\right\rangle+\int_{0}^{t}\left[\left\langle\xi(s), u(s) \cdot \nabla \phi\right\rangle+(\kappa+\nu)\left\langle\xi(s), \Delta \phi\right\rangle\right] \mathrm{d} s \\
			&-\frac{2 \sqrt{\nu}}{\|\theta\|_{\ell^{2}}} \sum_{k \in \mathbb{Z}_{0}^{2}} \theta_{k} \int_{0}^{t}\left\langle\xi(s), \sigma_{k} \cdot \nabla \phi\right\rangle \mathrm{d} B_{s}^{k},
	\end{aligned}\end{equation}
	and
	\begin{equation}\begin{aligned}\label{boussinesqsolutionpart2}
			\left\langle\omega(t), \phi\right\rangle
			=&\left\langle\omega_{0}, \phi\right\rangle
			+\int_{0}^{t}\left[\left\langle\omega(s), u(s) \cdot \nabla \phi\right\rangle-\left\langle\xi(s), \partial_{1} \phi\right\rangle+\nu\left\langle\omega(s), \Delta \phi\right\rangle\right] \mathrm{d} s \\
			&-\frac{2 \sqrt{\nu}}{\|\theta\|_{\ell^{2}}} \sum_{k \in \mathbb{Z}_{0}^{2}} \theta_{k} \int_{0}^{t}\left\langle\omega(s), \sigma_{k} \cdot \nabla \phi\right\rangle \mathrm{d} B_{s}^{k}.
	\end{aligned}\end{equation}
\end{definition}
Similarly to Remark \ref{isometry}, the stochastic integrals in \eqref{boussinesqsolutionpart1} and \eqref{boussinesqsolutionpart2} make sense. The following theorem is the existence results of weak solutions to (\ref{itoboussinesq}).
\begin{theorem}\label{existenceBoussinesq}
	For any $\xi_0,\omega_{0}\in L^2(\mathbb{T}^2)$, the stochastic equations (\ref{itoboussinesq}) admits a weak solution satisfying
	\begin{equation*}
		\|\xi\|_{L^{\infty}\left(L^{2}\right)} \vee\|\nabla \xi\|_{L^{2}\left(L^{2}\right)} \leq C_{\kappa}\left\|\xi_{0}\right\|_{L^{2}}, \quad\|\omega\|_{L^{\infty}\left(L^{2}\right)} \leq\left\|\omega_{0}\right\|_{L^{2}}+C_{\kappa, T}\left\|\xi_{0}\right\|_{L^{2}}\quad \mathbb{P}\text{-a.s.}
	\end{equation*}	
	where we employ more compact notations as $\|\cdot\|_{L^{\infty}\left(L^{2}\right)}=\|\cdot\|_{L^{\infty}\left(0, T ; L^{2}\left(\mathbb{T}^{2}\right)\right)}$ and $\|\cdot\|_{L^{2}\left(L^{2}\right)}=\\\|\cdot\|_{L^{2}\left(0, T ; L^{2}\left(\mathbb{T}^{2}\right)\right)}$.
\end{theorem}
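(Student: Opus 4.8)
The plan is to establish existence of weak solutions to the stochastic critical Boussinesq system \eqref{itoboussinesq} via Galerkin approximation, following closely the scheme already executed for the SQG equation in Theorem \ref{main}, but with the essential new feature that the two equations are coupled through the terms $\partial_1\xi$ and $u=K_0*\omega$. First I would set up the finite-dimensional projected system on $H_N$: writing $\xi_N=\Pi_N\xi$ and $\omega_N=\Pi_N\omega$, I project both evolution equations, using the vector fields $b_N$ and $G_N^k$ from \eqref{bN} together with the new forcing term $\Pi_N(\partial_1\xi_N)$ in the $\omega$-equation. Standard SDE theory again yields a unique local strong solution, so the crux is the a priori estimate that upgrades this to a global solution and supplies uniform bounds.

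The energy estimates are where the coupling must be handled carefully, and this is the main obstacle. Applying It\^o's formula to $\|\xi_N(t)\|_{L^2}^2$, the transport and noise terms cancel exactly as in Theorem \ref{main} by property \eqref{property}, and the It\^o correction combines with the $(\kappa+\nu)\Delta$ term; using relation \eqref{tensor} as before, one finds that the net effect is genuine dissipation, yielding
\begin{equation*}
	\frac{\d}{\d t}\|\xi_N(t)\|_{L^2}^2 + 2\kappa\|\nabla\xi_N(t)\|_{L^2}^2 \leq 0,
\end{equation*}
which gives both $\sup_t\|\xi_N(t)\|_{L^2}\leq\|\xi_0\|_{L^2}$ and, after integrating in time, the bound $\|\nabla\xi_N\|_{L^2(L^2)}\leq C_\kappa\|\xi_0\|_{L^2}$. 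For the $\omega$-equation the same cancellations occur, but now there is the extra term $\langle\omega_N,\partial_1\xi_N\rangle$, which I would estimate by $\|\omega_N\|_{L^2}\|\partial_1\xi_N\|_{L^2}\leq\frac12\|\omega_N\|_{L^2}^2+\frac12\|\nabla\xi_N\|_{L^2}^2$. After taking expectations and using the already-established integrated bound on $\|\nabla\xi_N\|_{L^2(L^2)}$, a Gronwall argument produces the stated estimate $\|\omega_N\|_{L^\infty(L^2)}\leq\|\omega_0\|_{L^2}+C_{\kappa,T}\|\xi_0\|_{L^2}$. The key point is that the temperature's dissipation is what controls the vorticity's growth, so the two bounds must be derived in the correct order.

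With these uniform bounds in hand, I would establish tightness exactly as in Section 3: the uniform $L^2$-in-space control gives the first term in the analogue of \eqref{boundedness}, while a time-increment estimate of the form \eqref{four} for both $\langle\xi_N(t)-\xi_N(s),e_k\rangle$ and $\langle\omega_N(t)-\omega_N(s),e_k\rangle$ gives the $W^{1/3,4}(0,T;H^{-\beta})$ bound; the only new term, $\int_s^t\langle\omega_N(r),\partial_1 e_k\rangle\,\d r$, is harmless since $\|\partial_1 e_k\|_\infty\lesssim|k|$. Invoking assertion (i) of Theorem \ref{embedding} yields tightness of the joint laws of $(\xi_N,\omega_N,B^N)$ on the appropriate product space, and Prohorov plus Skorokhod furnish a.s.-convergent copies on a new probability space. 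Finally I would pass to the limit in the weak formulations \eqref{boussinesqsolutionpart1}--\eqref{boussinesqsolutionpart2}: the linear terms and the new coupling $\langle\xi_N,\partial_1\phi\rangle$ converge directly from the strong convergence in $C([0,T];H^{-\delta})$, the stochastic integrals are handled by the standard argument cited in the proof of Theorem \ref{main}, and the nonlinear term $\langle\omega_N,u_N\cdot\nabla\phi\rangle$ is treated by the commutator representation \eqref{nonlinearterm} together with the commutator estimate \eqref{commutatorestimate}, verbatim as in Section 3. The weak continuity of trajectories follows as in Lemma \ref{trajectories}, completing the proof.
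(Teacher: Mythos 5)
Your overall scheme (Galerkin system on $H_N$, cancellation of the transport and noise contributions via \eqref{property} and \eqref{tensor}, tightness, Skorokhod representation, commutator trick for the vorticity nonlinearity) matches the paper's proof, but there is one genuine gap: you never address the nonlinear term $\int_0^t\langle\xi_N(s),u_N(s)\cdot\nabla(\Pi_N\phi)\rangle\,\d s$ in the \emph{temperature} equation. The commutator representation \eqref{nonlinearterm} is useless there, since it relies on the identity $u=\nabla^\perp(-\Lambda^{-1}\omega)$ turning $\omega u$ into a symmetric quadratic expression in the stream function; there is no analogous relation between $u$ and $\xi$, and the product of two merely weakly convergent sequences $\xi_N$ and $u_N$ need not converge. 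The paper resolves this by proving tightness of the laws of $\xi_N$ on $C([0,T];H^{-\delta})\cap L^2(0,T;L^2)$, where the $L^2(0,T;L^2)$ part comes from the uniform $L^2(0,T;H^1)$ bound on $\xi_N$ (available precisely because of the thermal diffusion $\kappa\Delta\xi$) combined with assertion (ii) of Theorem \ref{embedding}, i.e.\ the estimate \eqref{2}; then $\tilde\xi_{N_i}\to\tilde\xi$ strongly in $L^2(0,T;L^2)$ while $\tilde u_{N_i}\rightharpoonup\tilde u$ weakly, and the product passes to the limit. You invoke only assertion (i), which gives $C([0,T];H^{-\delta})$ convergence and is not enough for this term.

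Two smaller points on the a priori estimate for $\omega_N$. First, the differential inequalities for $\|\xi_N\|_{L^2}^2$ and $\|\omega_N\|_{L^2}^2$ are pathwise, since the martingale terms vanish identically by \eqref{property}; you should therefore not take expectations, as doing so cannot yield the $\mathbb{P}$-a.s.\ bound claimed in the statement. Second, your Young-inequality splitting $\langle\omega_N,\partial_1\xi_N\rangle\le\frac12\|\omega_N\|_{L^2}^2+\frac12\|\nabla\xi_N\|_{L^2}^2$ followed by Gronwall produces a factor $e^{T/2}$ in front of $\|\omega_0\|_{L^2}$, whereas the statement has coefficient $1$; the paper instead divides through to get $\d\|\omega_N(t)\|_{L^2}\le\|\nabla\xi_N(t)\|_{L^2}\,\d t$ and integrates, which gives the sharp form directly. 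These two issues are easily repaired, but the missing treatment of the temperature nonlinearity is a substantive omission.
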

The rest of this subsection is devoted to the proof of Theorem \ref{existenceBoussinesq}, which makes use of the Galerkin approximation and compactness method as in the proof of Theorem \ref{main}.

First, we will look for an approximate solution $\left(\xi_N(\cdot),\omega_{N}(\cdot)\right)$. Let $H_N=\mathrm{span}\{e_k:k\in\mathbb{Z}^2_0,|k|\leq N\}$, and orthogonal projection $\Pi_{N}: L^{2}\left(\mathbb{T}^{2}\right) \rightarrow H_{N}$ and $\xi_N=\Pi_{N}\xi$, $\omega_N=\Pi_{N}\omega$. Operators $b_{N}$ and $G_{N}$ are defined the same as (\ref{bN}). In addition, we define
\begin{align*}
	h_N(\xi)=\Pi_{N}\left(\left(K_{0} * \Pi_{N} \omega\right) \cdot \nabla\left(\Pi_{N} \xi\right)\right).
\end{align*}
Properties (\ref{property}) still hold, and we have, 
\begin{align*}
	\<h_N(\xi_N),\xi_N\>=0, \quad \text{for all } \xi_N\in H_N.
\end{align*}
The finite dimensional version of (\ref{itoboussinesq}) is
\begin{equation}\label{finiteboussinesq}
	\left\{\begin{array}{l}
		\mathrm{d} \xi_{N}=-h_{N}\left(\xi_{N}(t)\right) \mathrm{d} t+(\kappa+\nu) \Delta \xi_{N}(t) \mathrm{d} t+\frac{2 \sqrt{\nu}}{\|\theta\|_{\ell^{2}}} \sum_{k \in \mathbb{Z}_{0}^{2}} \theta_{k} G_{N}^{k}\left(\xi_{N}(t)\right) \mathrm{d} B_{t}^{k}, \\
		\mathrm{d} \omega_{N}=-b_{N}\left(\omega_{N}(t)\right) \mathrm{d} t+\left(\partial_{1} \xi_{N}+\nu \Delta \xi_{N}\right) \mathrm{d} t+\frac{2 \sqrt{\nu}}{\|\theta\|_{\ell^{2}}} \sum_{k \in \mathbb{Z}_{0}^{2}} \theta_{k} G_{N}^{k}\left(\omega_{N}(t)\right) \mathrm{d} B_{t}^{k},
	\end{array}\right.
\end{equation}
with the initial data $\xi_{N}(0)=\Pi_{N} \xi_{0}$ and $\omega_{N}(0)=\Pi_{N} \omega_{0}$. It follows from standard SDE theory that there exists a unique local strong solution $\left(\xi_N(\cdot),\omega_{N}(\cdot)\right)$ under any initial conditions.

We now investigate a priori estimate of $\xi_N$ and $\omega_N$.
Similarly to the computations below (\ref{finitedimension}), we have
\begin{align*}
	\mathrm{d}\left\|\xi_{N}(t)\right\|_{L^{2}}^{2}\leq-2\kappa\left\|\nabla\xi_{N}(t)\right\|_{L^{2}}^{2}\mathrm{d}t.
\end{align*}
Thus,
\begin{equation}\label{estimatexiN}
	\left\|\xi_N(t)\right\|_{L^{2}}^{2}+2\kappa \int_{0}^{t}\left\|\nabla \xi_{N}(s)\right\|_{L^{2}}^{2} \mathrm{d} s \leq\left\|\xi_{N}(0)\right\|_{L^{2}}^{2}\leq\left\|\xi_0\right\|_{L^{2}}^{2}, %\quad\mathbb{P}-a.s.
\end{equation}
Similarly,
\begin{align*}
	\mathrm{d}\left\|\omega_{N}(t)\right\|_{L^{2}}^{2}&\leq 2\left\langle\omega_{N}(t),\partial_{1}\xi_{N}(t) \right\rangle \mathrm{d}t\leq2\|\omega_{N}(t)\|_{L^{2}}\left\|\partial_{1} \xi_{N}(t)\right\|_{L^{2}} \mathrm{d} t \\&\leq 2\|\omega_{N}(t)\|_{L^{2}}\|\nabla \xi_{N}(t)\|_{L^{2}} \mathrm{d} t.
\end{align*}
That is to say, $\mathrm{d}\left\|\omega_{N}(t)\right\|_{L^{2}}\leq\|\nabla \xi_{N}(t)\|_{L^{2}} \mathrm{d} t.$
Therefore,
\begin{equation}\label{estimateomegaN}
	\begin{aligned}
		\left\|\omega_{N}(t)\right\|_{L^{2}} &\leq\left\|\omega_{N}(0)\right\|_{L^{2}}+\int_{0}^{t}\left\|\nabla \xi_{N}(s)\right\|_{L^{2}} \mathrm{d} s \leq\left\|\omega_{N}(0)\right\|_{L^{2}}+\sqrt{T}\|\nabla \xi_{N}\|_{L^{2}\left(L^{2}\right)}\\&\leq\left\|\omega_{N}(0)\right\|_{L^{2}}+C_{\kappa,T}\| \xi_{N}(0)\|_{L^{2}}\\&\leq\left\|\omega_{0}\right\|_{L^{2}}+C_{\kappa, T}\left\|\xi_{0}\right\|_{L^{2}}.
	\end{aligned}
\end{equation}
By (\ref{estimatexiN}) and (\ref{estimateomegaN}), global existence of solution to (\ref{finiteboussinesq}) holds; moreover,
\begin{equation}\label{estimateofxiNandomegaN}
	\|\xi_N\|_{L^{\infty}\left(L^{2}\right)} \vee\|\nabla \xi_N\|_{L^{2}\left(L^{2}\right)} \leq C_{\kappa}\left\|\xi_{0}\right\|_{L^{2}}, \quad\|\omega_N\|_{L^{\infty}\left(L^{2}\right)} \leq\left\|\omega_{0}\right\|_{L^{2}}+C_{\kappa, T}\left\|\xi_{0}\right\|_{L^{2}}.
\end{equation}

Then, we will look for a candidate $(\tilde{\xi}(\cdot),\tilde{\omega}(\cdot),B_{\cdot})$ as a weak solution to equations (\ref{itoboussinesq}).
By the compact embedding Theorem \ref{embedding}, to show the laws of $\{\xi_N\}_{N\geq1}$ is tight on $C\left([0, T], H^{-\delta}\right)\cap L^{2}\left(0, T ; L^{2}\right)$ and the laws of $\{\omega_N\}_{N\geq1}$ is tight on $C\left([0, T], H^{-\delta}\right)$, it is sufficient to prove
\begin{equation}\label{1}
	\sup _{N \geq 1} \mathbb{E} \sup _{t\in[0,T] }\left\|\xi_{N}(t)\right\|_{L^{2}} +\sup _{N \geq 1} \mathbb{E} \int_{0}^{T} \int_{0}^{T} \frac{\left\|\xi_{N}(t)-\xi_{N}(s)\right\|_{H^{-\beta}}^{4}}{|t-s|^{7 / 3}} \mathrm{d} t \mathrm{d} s<\infty,
\end{equation}
\begin{equation}\label{2}
	\sup _{N \geq 1} \mathbb{E} \int_0^t \left\|\xi_{N}(t)\right\|_{H^{1}}^{2}\mathrm{d} t +\sup _{N \geq 1} \mathbb{E} \int_{0}^{T} \int_{0}^{T} \frac{\left\|\xi_{N}(t)-\xi_{N}(s)\right\|_{H^{-\beta}}^{2}}{|t-s|^{1+2\gamma}} \mathrm{d} t \mathrm{d} s<\infty,
\end{equation}
and
\begin{equation}\label{3}
	\sup _{N \geq 1} \mathbb{E} \sup _{t\in[0,T] }\left\|\omega_{N}(t)\right\|_{L^{2}} +\sup _{N \geq 1} \mathbb{E} \int_{0}^{T} \int_{0}^{T} \frac{\left\|\omega_{N}(t)-\omega_{N}(s)\right\|_{H^{-\beta}}^{4}}{|t-s|^{7 / 3}} \mathrm{d} t \mathrm{d} s<\infty.
\end{equation}
The first terms of the above three inequalities have been controlled by estimate (\ref{estimateofxiNandomegaN}) respectively. It remains to estimate the second terms on the left-hand side, similarly to Lemma \ref{lemma1}.
\begin{lemma}\label{lemma5}
	There is a constant $C=C(\kappa,\nu,T,\|\xi_0\|_{L^2},\|\omega_0\|_{L^2})$, such that for any $N \geq 1$ and $0 \leq s<t \leq T$,
	\begin{equation*}
		\mathbb{E}\left(\left\langle\xi_{N}(t)-\xi_{N}(s), e_{k}\right\rangle^{4}\right)\vee\mathbb{E}\left(\left\langle\omega_{N}(t)-\omega_{N}(s), e_{k}\right\rangle^{4}\right) \leq C|k|^{8}|t-s|^{2} \quad \text { for all } k \in \mathbb{Z}_{0}^{2}.
	\end{equation*}	
\end{lemma}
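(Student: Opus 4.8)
The plan is to mirror the proof of Lemma \ref{lemma1}, now carried out for each of the two components $\xi_N$ and $\omega_N$ of the Galerkin system (\ref{finiteboussinesq}). Testing (\ref{finiteboussinesq}) against $e_k$ (it suffices to treat $|k|\le N$) produces, for $\xi_N$, an increment identity of exactly the shape of (\ref{le1}): $\langle \xi_N(t)-\xi_N(s),e_k\rangle$ is the sum of a nonlinear advection integral $\int_s^t\langle \xi_N(r),u_N(r)\cdot\nabla e_k\rangle\,\mathrm{d}r$, a viscous integral $(\kappa+\nu)\int_s^t\langle \xi_N(r),\Delta e_k\rangle\,\mathrm{d}r$, and a stochastic integral; the identity for $\langle \omega_N(t)-\omega_N(s),e_k\rangle$ has the analogous three terms (with $\kappa+\nu$ replaced by $\nu$) together with one extra drift from the coupling $\partial_1\xi_N$. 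I would bound the fourth moment of each term separately and combine via $\mathbb E|a_1+\dots+a_m|^4\le m^3\sum_j\mathbb E|a_j|^4$.

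For the $\xi_N$-increment the estimates are identical to Lemma \ref{lemma1}. The nonlinear term is controlled by Hölder in time followed by $|\langle \xi_N,u_N\cdot\nabla e_k\rangle|\le\|\xi_N\|_{L^2}\|u_N\|_{L^2}\|\nabla e_k\|_\infty$, where $\|\nabla e_k\|_\infty\le C|k|$ by (\ref{factofbasis}), $\|u_N\|_{L^2}\lesssim\|\omega_N\|_{L^2}$, and both $\|\xi_N\|_{L^2}$, $\|\omega_N\|_{L^2}$ are uniformly bounded by the a priori estimate (\ref{estimateofxiNandomegaN}); this gives a bound of order $|k|^4|t-s|^4$. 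The viscous term uses $\Delta e_k=-4\pi^2|k|^2 e_k$, whence $|\langle \xi_N,\Delta e_k\rangle|\le C|k|^2\|\xi_N\|_{L^2}$ and a contribution of order $|k|^8|t-s|^4$. The stochastic term is treated by the Burkholder--Davis--Gundy inequality and the fact that $\{\sigma_l\}_{l\in\mathbb Z_0^2}$ is an incomplete orthonormal family, exactly as in the final display of Lemma \ref{lemma1}; here $\theta$ is fixed, so the factor $\|\theta\|_{\ell^\infty}^4/\|\theta\|_{\ell^2}^4$ is a finite constant absorbed into $C$, and the contribution is of order $|k|^4|t-s|^2$. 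Since $|t-s|\le T$, each contribution is dominated by $C|k|^8|t-s|^2$.

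For the $\omega_N$-increment the nonlinear, viscous, and stochastic terms are handled verbatim as above. The only genuinely new ingredient is the coupling drift: from the weak form this is $-\int_s^t\langle \xi_N(r),\partial_1 e_k\rangle\,\mathrm{d}r$, and since $\partial_1 e_k=2\pi k_1 e_{-k}$ by (\ref{factofbasis}) we get $|\langle \xi_N,\partial_1 e_k\rangle|\le 2\pi|k|\,\|\xi_N\|_{L^2}$, bounded again through (\ref{estimateofxiNandomegaN}); Hölder in time then yields a contribution of order $|k|^4|t-s|^4\le C|k|^8|t-s|^2$. I do not expect a real obstacle here: the argument is routine and parallel to Lemma \ref{lemma1}, and the new coupling term is in fact milder than the viscous term, being of first order in $e_k$ (scaling like $|k|$) rather than second order. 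The only point requiring care is the bookkeeping of the constant $C$, which must now record its dependence on both $\|\xi_0\|_{L^2}$ and $\|\omega_0\|_{L^2}$ through the two a priori bounds in (\ref{estimateofxiNandomegaN}); with this in place both stated inequalities follow.
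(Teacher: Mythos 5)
Your proposal is correct and follows essentially the same route as the paper: both reduce the lemma to the term-by-term estimates of Lemma \ref{lemma1} (H\"older in time for the drift integrals, \eqref{factofbasis} for the $|k|$-powers, Burkholder--Davis--Gundy plus the incomplete orthonormality of $\{\sigma_l\}$ for the martingale part), and both identify the coupling drift as the only genuinely new term, bounding it by $C|k|^4|t-s|^4\le C'|k|^8|t-s|^2$ exactly as you do. As a minor aside, your form $-\int_s^t\langle\xi_N(r),\partial_1 e_k\rangle\,\mathrm{d}r$ of that coupling term is the correct one; the paper's displayed increment identity and subsequent estimate write $\omega_N$ in that inner product, which is a typo.
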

\begin{proof}
It is enough to consider $|k|\leq N$. By (\ref{finiteboussinesq}), we have
\begin{equation*}\begin{aligned}
		\left\langle\xi_{N}(t)-\xi_{N}(s), e_{k}\right\rangle=& \int_{s}^{t}\left\langle\xi_{N}(r), u_{N}(r) \cdot \nabla e_{k}\right\rangle \mathrm{d} r+(\kappa+\nu) \int_{s}^{t}\left\langle\xi_{N}(r), \Delta e_{k}\right\rangle \mathrm{d} r \\
		&-\frac{2 \sqrt{\nu}}{\|\theta\|_{\ell^{2}}} \sum_{l \in \mathbb{Z}_{0}^{2}} \theta_{l} \int_{s}^{t}\left\langle\xi_{N}(r), \sigma_{l} \cdot \nabla e_{k}\right\rangle \mathrm{d} B_{r}^{l},
\end{aligned}\end{equation*}
and	
\begin{equation*}\begin{aligned}
		\left\langle\omega_{N}(t)-\omega_{N}(s), e_{k}\right\rangle=& \int_{s}^{t}\left\langle\omega_{N}(r), u_{N}(r) \cdot \nabla e_{k}\right\rangle \mathrm{d} r+\nu \int_{s}^{t}\left\langle\omega_{N}(r), \Delta e_{k}\right\rangle \mathrm{d} r \\
		&-\int_{s}^{t}\left\langle\omega_N(r),\partial_{1}e_k\right\rangle\mathrm{d} r-\frac{2 \sqrt{\nu}}{\|\theta\|_{\ell^{2}}} \sum_{l \in \mathbb{Z}_{0}^{2}} \theta_{l} \int_{s}^{t}\left\langle\omega_{N}(r), \sigma_{l} \cdot \nabla e_{k}\right\rangle \mathrm{d} B_{r}^{l}.
\end{aligned}\end{equation*}
Compared to the proof of Lemma \ref{lemma1}, the only term remains to estimate is
\begin{equation*}\begin{aligned}
		\mathbb{E}\left(\left|\int_{s}^{t}\left\langle\omega_{N}(r), \partial_{1}e_{k}\right\rangle \mathrm{d} r\right|^{4}\right) &\leq C\left\|\omega_{0}\right\|_{L^{2}}^{4}\|\partial_{1}e_{k}\|^{4}_{L^2}|t-s|^{4}\\&\leq C\left\|\omega_{0}\right\|_{L^{2}}^{4}\|\nabla e_{k}\|^{4}_{L^2}|t-s|^{4}\\&\leq C\left\|\omega_{0}\right\|_{L^{2}}^{4}|k|^{4}|t-s|^{4}.
	\end{aligned}
\end{equation*}
Having Lemma \ref{lemma5} in hand, we can prove the uniform estimates in \eqref{1}-\eqref{3} by repeating the calculations below Lemma \ref{lemma1}.
\end{proof}
We denote $\eta_N$ as the joint law of the pair of processes  $(\xi_N,\omega_{N})$, $N\geq1$. By the discussion above, the family $\{\eta_N\}_{N\geq1}$ is tight on
\begin{equation}\label{mathcalz}
	\mathcal{Z}=\left(L^{2}\left(0, T; L^{2}\left(\mathbb{T}^{2}\right)\right) \cap C\left([0, T]; H^{-\delta}\left(\mathbb{T}^{2}\right)\right)\right) \times C\left([0, T]; H^{-\delta}\left(\mathbb{T}^{2}\right)\right).
\end{equation}
The rest argument is parallel to stochastic SQG equation. Denote by $Q_N$ the joint law of $(\xi_N(\cdot),\omega_{N}(\cdot),B_{\cdot})$ on
\begin{equation}
	\mathcal{Z}\times\mathcal{Y}=\left(L^{2}\left(0, T; L^{2}\left(\mathbb{T}^{2}\right)\right) \cap C\left([0, T]; H^{-\delta}\left(\mathbb{T}^{2}\right)\right)\right) \times C\left([0, T]; H^{-\delta}\left(\mathbb{T}^{2}\right)\right)\times C\left([0, T];\mathbb{R}^{\mathbb{Z}_{0}^{2}}\right).
\end{equation}
and $\left\{Q_{N}\right\}_{N \geq 1}$ is tight on $\mathcal{Z} \times \mathcal{Y}$. Skorokhod's representation theorem implies that there exists a new probability space $(\tilde{\Xi}, \tilde{\mathcal{F}}, \tilde{\mathbb{P}})$, a sequence of stochastic processes $(\tilde{\xi}_{N_i}(\cdot),\tilde{\omega}_{N_i}(\cdot),B^{N_i}_{\cdot})$ and a limit process $(\tilde{\xi}(\cdot),\tilde{\omega}(\cdot),B_{\cdot})$ defined on $(\tilde{\Xi}, \tilde{\mathcal{F}}, \tilde{\mathbb{P}})$, such that
\begin{itemize}
	\item [(i)] $\left(\tilde{\xi}_{N_{i}}, \tilde{\omega}_{N_{i}},\tilde{B}^{N_i}\right)$ has the same law as $\left(\xi_{N_{i}}, \omega_{N_{i}},B^{N_i}\right)$ for any $i\geq1$;
	\item [(ii)] $\tilde{\mathbb{P}}$-a.s., $\left(\tilde{\xi}_{N_{i}}, \tilde{\omega}_{N_{i}},\tilde{B}^{N_i}\right)$ converges to the limit process $(\tilde{\xi}, \tilde{\omega},\tilde{B})$ in the topology of
	\begin{align*}
		\left(L^{2}\left(0, T; L^{2}\left(\mathbb{T}^{2}\right)\right) \cap C\left([0, T]; H^{-\delta}\left(\mathbb{T}^{2}\right)\right)\right) \times C\left([0, T];H^{-\delta}\left(\mathbb{T}^{2}\right)\right)\times C\left([0, T];\mathbb{R}^{\mathbb{Z}_{0}^{2}}\right).
	\end{align*}
\end{itemize}
Denoted $\tilde{u}=K_{0} * \tilde{\omega}$, it implies that $\tilde{\mathbb{P}}$-a.s., $\tilde{u}_{N_{i}}(\cdot)$ converges strongly to $\tilde{u}(\cdot)$ $\operatorname{in} C\left([0, T] ; H^{-\delta}\left(\mathbb{T}^{2}, \mathbb{R}^{2}\right)\right)$. We also can prove that the limit processes $\tilde{\xi}$ and $\tilde{\omega}$ have $\tilde{\mathbb{P}}$-a.s. weakly continuous trajectories, and satisfy
\begin{align*}
\tilde{\mathbb{P}}\text{-a.s}, \quad\|\tilde{\xi}\|_{L^{\infty}\left(L^{2}\right)}\vee\|\nabla \tilde{\xi}\|_{L^{2}\left(L^{2}\right)} \leq\left\|\xi_{0}\right\|_{L^{2}} \quad \text { and } \quad\|\tilde{\omega}\|_{L^{\infty}\left(L^{2}\right)} \leq\left\|\omega_{0}\right\|_{L^{2}}+C_{\kappa, T}\left\|\xi_{0}\right\|_{L^{2}}.
\end{align*}

Finally, we prove the convergence of the following equations as $i\rightarrow\infty$:
\begin{equation}\label{Nixi}
	\begin{aligned}
		\left\langle\tilde{\xi}_{N_{i}}(t), \phi\right\rangle=&\left\langle\xi_{N_{i}}(0), \phi\right\rangle+\int_{0}^{t}\left\langle\tilde{\xi}_{N_{i}}(s), \tilde{u}_{N_{i}}(s) \cdot \nabla\left(\Pi_{N_{i}} \phi\right)\right\rangle \mathrm{d} s \\
		&+(\nu+\kappa) \int_{0}^{t}\left\langle\tilde{\xi}_{N_{i}}(s), \Delta \phi\right\rangle \mathrm{d} s-\frac{2 \sqrt{\nu}}{\|\theta\|_{\ell^{2}}} \sum_{k \in \mathbb{Z}_{0}^{2}} \theta_{k} \int_{0}^{t}\left\langle\tilde{\xi}_{N_{i}}(s), \sigma_{k} \cdot \nabla\left(\Pi_{N_{i}} \phi\right)\right\rangle \mathrm{d} \tilde{B}_{s}^{N_{i}, k},
	\end{aligned}
\end{equation}
and
\begin{equation}\label{Niomega}
	\begin{aligned}
		\left\langle\tilde{\omega}_{N_{i}}(t), \phi\right\rangle=&\left\langle\omega_{N_{i}}(0), \phi\right\rangle+\int_{0}^{t}\left\langle\tilde{\omega}_{N_{i}}(s), \tilde{u}_{N_{i}}(s) \cdot \nabla\left(\Pi_{N_{i}} \phi\right)\right\rangle \mathrm{d} s -\int_{0}^{t}\left\langle\tilde{\omega}_{N_i}(s),\partial_{1}\phi\right\rangle\mathrm{d} s\\
		&+\nu \int_{0}^{t}\left\langle\tilde{\omega}_{N_{i}}(s), \Delta \phi\right\rangle \mathrm{d} s-\frac{2 \sqrt{\nu}}{\|\theta\|_{\ell^{2}}} \sum_{k \in \mathbb{Z}_{0}^{2}} \theta_{k} \int_{0}^{t}\left\langle\tilde{\omega}_{N_{i}}(s), \sigma_{k} \cdot \nabla\left(\Pi_{N_{i}} \phi\right)\right\rangle \mathrm{d} \tilde{B}_{s}^{N_{i}, k},
	\end{aligned}
\end{equation}
where $\phi\in C^{\infty}(\mathbb{T}^2)$ and $t \in[0, T]$. The proof of the stochastic integral terms is standard. Since $\tilde{\xi}_{N_{i}}$ converges to $\tilde{\xi}$ in the topology of $L^{2}\left(0, T; L^{2}\left(\mathbb{T}^{2}\right)\right)$, it is easy to show that, $\tilde{\mathbb{P}}\text{-a.s.}$,
\begin{align*}
	\int_{0}^{t}\left\langle\tilde{\xi}_{N_{i}}(s), \tilde{u}_{N_{i}}(s) \cdot \nabla\left(\Pi_{N_{i}} \phi\right)\right\rangle \mathrm{d} s \longrightarrow \int_{0}^{t}\langle\tilde{\xi}(s), \tilde{u}(s) \cdot \nabla \phi\rangle \mathrm{d} s \quad \text{as} \quad i \rightarrow \infty.
\end{align*}
The nonlinear term in (\ref{Niomega}) can be treated in the same way as in Section 3.1. Let $\tilde{\psi}=-\Lambda^{-1} \tilde{\omega}, \tilde{\psi}_{N_{i}}=-\Lambda^{-1} \tilde{\omega}_{N_{i}}$. Then, $\tilde{\psi}_{N_{i}}$ converges strongly to $\tilde{\psi}$ in $C\left([0, T] ; H^{1-\delta}\right)$, and
\begin{align*}
	&\int_{0}^{t}\left\langle\tilde{\omega}, \tilde{u}\cdot\nabla \phi \right\rangle\mathrm{d}s-\int_{0}^{t}\left\langle \tilde{\omega}_{N_{i}}, \tilde{u}_{N_{i}}\cdot \nabla \Pi_{N_i} \phi \right\rangle\mathrm{d}s\\
	=&\frac{1}{2} \int_{0}^{t}\int_{{\mathbb{T}}^{2}}\left(\tilde{u}-\tilde{u}_{N_{i}}\right)\cdot[\Lambda, \nabla \phi] \tilde{\psi}\mathrm{d} x\mathrm{d}s
	+\frac{1}{2}\int_{0}^{t}\int_{{\mathbb{T}}^{2}}\tilde{u}_{N_{i}}\cdot\left[\Lambda, \nabla\left(\phi-\Pi_{N_i} \phi\right)\right] \tilde{\psi}\mathrm{d} x\mathrm{d}s\\
	&+\frac{1}{2}\int_{0}^{t}\int_{{\mathbb{T}}^{2}}\tilde{u}_{N_{i}}\cdot\left[\Lambda, \nabla \Pi_{N_i} \phi\right]\left(\tilde{\psi}-\tilde{\psi}_{N_i}\right) \mathrm{d} x\mathrm{d}s.
\end{align*}
Recalling the Lemma \ref{weakL2}, $\tilde{u}_{N_i}$ converges weakly to $\tilde{u}$ in $L^{2}\left(0, T ;L^2(\mathbb{T}^{2},\mathbb{R}^{2})\right)$, and the commutator estimate (\ref{commutatorestimate}) implies that for any $t \in[0, T]$, $\tilde{\mathbb{P}}\text{-a.s.}$,
\begin{align*}
	\int_{0}^{t}\left\langle\tilde{\omega}_{N_{i}}(s), \tilde{u}_{N_{i}}(s) \cdot \nabla\left(\Pi_{N_{i}} \phi\right)\right\rangle \mathrm{d}s \longrightarrow \int_{0}^{t}\langle\tilde{\omega}(s), \tilde{u}(s) \cdot \nabla \phi\rangle \mathrm{d} s \quad \text{as} \quad i \rightarrow \infty.
\end{align*}
The convergence of the other terms in (\ref{Nixi}) and (\ref{Niomega}) is obvious. Therefore, letting $i\rightarrow\infty$, we conclude that the pair $(\tilde{\xi},\tilde{\omega})$ satisfies the equations (\ref{boussinesqsolutionpart1}) and (\ref{boussinesqsolutionpart2}). This finishes the proof of existence of weak solution to stochastic Boussinesq equations (\ref{itoboussinesq}).

\subsection{The Proof of Scaling Limit Theorem \ref{scalingresultforBoussinesq}}
\begin{proof}By the a priori estimate in Theorem \ref{existenceBoussinesq}, for $N\geq1$,
	\begin{equation}\label{xiomegaestimate}
		\|\xi^N\|_{L^{\infty}\left(L^{2}\right)} \vee\|\nabla \xi^N\|_{L^{2}\left(L^{2}\right)} \leq\left(1 \vee \kappa^{-1 / 2}\right)\left\|\xi_{0}^N\right\|_{L^{2}}, \quad\|\omega^N\|_{L^{\infty}\left(L^{2}\right)} \leq\left\|\omega_{0}^N\right\|_{L^{2}}+C_{\kappa, T}\left\|\xi_{0}^N\right\|_{L^{2}}.
	\end{equation}	
	Similarly to the previous discussion, we can show the laws of $\{\xi^N\}_{N\geq1}$ are tight on $C\left([0, T], H^{-\delta}\right)\cap L^{2}\left(0, T ; L^{2}\right)$ and the laws of $\{\omega_N\}_{N\geq1}$ are tight on $C\left([0, T], H^{-\delta}\right)$. Consequently, Prohorov's theorem and Skorokhod's representation theorem imply that there exist a new probability space $(\hat{\Xi}, \hat{\mathcal{F}}, \hat{\mathbb{P}})$, a sequence of stochastic processes $(\hat{\xi}^{N_i}(\cdot),\hat{\omega}^{N_i}(\cdot),\hat{B}^{N_i}_{\cdot})$ and a limit process $(\hat{\xi}(\cdot),\hat{\omega}(\cdot),\hat{B}_{\cdot})$ defined on $(\hat{\Xi}, \hat{\mathcal{F}}, \hat{\mathbb{P}})$, such that
	\begin{itemize}
		\item[(i)] $\left(\hat{\xi}^{N_{i}}, \hat{\omega}^{N_{i}}, \hat{B}^{N_i}\right)$ has the same law as $\left(\xi^{N_{i}}, \omega^{N_{i}}, B^{N_i}\right)$ for any $i\geq1$;
		\item[(ii)] $\hat{\mathbb{P}}$-a.s., $\left(\hat{\xi}^{N_{i}}, \hat{\omega}^{N_{i}},\hat{B}^{N_i}\right)$ converges to the limit process $(\hat{\xi}, \hat{\omega},\hat{B})$ in the topology of
		\begin{align*}
			\left(L^{2}\left(0, T; L^{2}\left(\mathbb{T}^{2}\right)\right) \cap C\left([0, T]; H^{-\delta}\left(\mathbb{T}^{2}\right)\right)\right) \times C\left([0, T];H^{-\delta}\left(\mathbb{T}^{2}\right)\right)\times C\left([0, T];\mathbb{R}^{\mathbb{Z}_{0}^{2}}\right).
		\end{align*}
	\end{itemize}
	The assertion (i) implies that $\hat{\xi}^{N_i}$ and $\hat{\omega}^{N_i}$ satisfy the same equations as \eqref{Nixi} and \eqref{Niomega}; more precisely, for any $\phi \in C^{\infty}\left(\mathbb{T}^{2}\right)$, for all $t \in[0, T]$, $\hat{\mathbb{P}}$-a.s.
	\begin{equation}\label{boussinesqsolutionN1}
		\begin{aligned}
			\left\langle\hat{\xi}^{N_{i}}(t), \phi\right\rangle=&\left\langle\xi^{N_{i}}(0), \phi\right\rangle+\int_{0}^{t}\left\langle\hat{\xi}^{N_{i}}(s), \hat{u}^{N_{i}}(s) \cdot \nabla\phi\right\rangle \mathrm{d} s
			+(\nu+\kappa) \int_{0}^{t}\left\langle\hat{\xi}^{N_{i}}(s), \Delta \phi\right\rangle \mathrm{d} s\\&-\frac{2 \sqrt{\nu}}{\left\|\theta^{N_i}\right\|_{\ell^{2}}} \sum_{k \in \mathbb{Z}_{0}^{2}} \theta_{k}^{N_i} \int_{0}^{t}\left\langle\hat{\xi}^{N_{i}}(s), \sigma_{k} \cdot \nabla\phi\right\rangle \mathrm{d} \hat{B}_{s}^{N_{i}, k},
		\end{aligned}
	\end{equation}
	and
	\begin{equation}\label{boussinesqsolutionN2}
		\begin{aligned}
			\left\langle\hat{\omega}^{N_{i}}(t), \phi\right\rangle=&\left\langle\omega^{N_{i}}(0), \phi\right\rangle+\int_{0}^{t}\left\langle\hat{\omega}^{N_{i}}(s), \hat{u}^{N_{i}}(s) \cdot \nabla\phi\right\rangle \mathrm{d} s -\int_{0}^{t}\left\langle\hat{\omega}^{N_i}(s),\partial_{1}\phi\right\rangle\mathrm{d} s\\
			&+\nu \int_{0}^{t}\left\langle\hat{\omega}^{N_{i}}(s), \Delta \phi\right\rangle \mathrm{d} s-\frac{2 \sqrt{\nu}}{\left\|\theta^{N_i}\right\|_{\ell^{2}}} \sum_{k \in \mathbb{Z}_{0}^{2}} \theta_{k}^{N_i} \int_{0}^{t}\left\langle\hat{\omega}^{N_{i}}(s), \sigma_{k} \cdot \nabla\phi\right\rangle \mathrm{d} \hat{B}_{s}^{N_{i}, k}.
		\end{aligned}
	\end{equation}
Similarly to the calculations below \eqref{L2ineqNi}, we can show that the stochastic integral terms in \eqref{boussinesqsolutionN1} and \eqref{boussinesqsolutionN2} vanish as $i\rightarrow\infty$. Indeed, by the It\^{o} isometry,
	\begin{equation*}\begin{aligned}
			&\hat{\mathbb{E}}\left[\left(\frac{2 \sqrt{\nu}}{\|\theta^{N_{i}}\|_{\ell^{2}}} \sum_{k \in \mathbb{Z}_{0}^{2}} \theta_{k}^{N_{i}} \int_{0}^{t}\left\langle\hat{\xi}^{N_{i}}(s), \sigma_{k} \cdot \nabla \phi\right) \mathrm{d} \hat{B}_{s}^{N_{i}, k}\right)^{2}\right] \\
			=&\frac{4 \nu}{\|\theta^{N_{i}}\|^2_{\ell^{2}}} \hat{\mathbb{E}}\left[\sum_{k \in \mathbb{Z}_{0}^{2}}\left(\theta_{k}^{N_{i}}\right)^{2} \int_{0}^{t}\left\langle\hat{\xi}^{N_{i}}(s), \sigma_{k} \cdot \nabla \phi\right\rangle^{2} \mathrm{d} s\right] \\
			\leq& (\frac{4 \nu}{\|\theta^{N_{i}}\|^2_{\ell^{2}}}\left\|\theta^{N_{i}}\right\|_{\ell^{\infty}}^{2} \hat{\mathbb{E}} \int_{0}^{t} \sum_{k \in \mathbb{Z}_{0}^{2}}\left\langle\hat{\xi}^{N_{i}}(s), \sigma_{k} \cdot \nabla \phi\right\rangle^{2} \mathrm{d} s\\
			=&4\nu\frac{\left\|\theta^{N_{i}}\right\|_{\ell^{\infty}}^{2}}{\left\|\theta^{N_i}\right\|_{\ell^{2}}^{2}} \hat{\mathbb{E}} \int_{0}^{t} \sum_{k \in \mathbb{Z}_{0}^{2}}\left\langle\hat{\xi}^{N_{i}}(s), \sigma_{k} \cdot \nabla \phi\right\rangle^{2} \mathrm{d} s.
	\end{aligned}\end{equation*}
	By assertion (i), $\hat{\xi}^{N_i}$ fufills the first estimate in \eqref{xiomegaestimate} $\hat{\P}$-a.s.; therefore,
	\begin{equation*}
		\sum_{k \in \mathbb{Z}_{0}^{2}}\left\langle\hat{\xi}^{N_{i}}(s), \sigma_{k} \cdot \nabla \phi\right\rangle^{2} \leq\left\|\hat{\xi}^{N_{i}}(s) \nabla \phi\right\|_{L^{2}}^{2} \leq\|\nabla \phi\|_{\infty}^{2}\left\|\xi_{0}^{N_i}\right\|_{L^{2}}^{2}\leq C\|\nabla \phi\|_{\infty}^{2} \quad \hat{\mathbb{P}}\text{-a.s.}
	\end{equation*}
	Combining this fact with the condition (\ref{theta-N}), we conclude that, as $i \rightarrow \infty$, the stochastic integral terms in (\ref{boussinesqsolutionN1}) converge to 0. The analogous argument also implies that the stochastic integral terms in (\ref{boussinesqsolutionN2}) vanishes as $i\rightarrow\infty$. The proof of convergence of other terms is parallel to that in Theorem \ref{existenceBoussinesq}. Thus, we get the limit equations:
	\begin{equation}
		\begin{aligned}
			\left\langle\hat{\xi}(t), \phi\right\rangle=\left\langle\xi(0), \phi\right\rangle+\int_{0}^{t}\left\langle\hat{\xi}(s), \hat{u}(s) \cdot \nabla\phi\right\rangle \mathrm{d} s +(\nu+\kappa) \int_{0}^{t}\left\langle\hat{\xi}(s), \Delta \phi\right\rangle \mathrm{d} s,
		\end{aligned}
	\end{equation}
	and
	\begin{equation}
		\begin{aligned}
			\left\langle\hat{\omega}(t), \phi\right\rangle=\left\langle\omega(0), \phi\right\rangle+\int_{0}^{t}\left\langle\hat{\omega}(s), \hat{u}(s) \cdot \nabla\phi\right\rangle \mathrm{d} s -\int_{0}^{t}\left\langle\hat{\omega}(s),\partial_{1}\phi\right\rangle\mathrm{d} s+\nu \int_{0}^{t}\left\langle\hat{\omega}(s), \Delta \phi\right\rangle \mathrm{d} s.
		\end{aligned}
	\end{equation}
	That is to say, the limit $(\hat{\xi},\hat{\omega})$ is a weak solution to (\ref{deterministicviscousBoussinesq}) with initial data $(\xi_0,\omega_0)$.
\end{proof}
Analogous to the discussion of Theorem \ref{SQGconvergenceinProb}, the similar result can be obtained for the scaling limit of stochastic Boussinesq equations (cf. Theorem 1.2 in \cite{Luo20}). For $(\xi_0,\omega_0)\in(L(\mathbb{T}^2))^2$, define $\|(\xi_0,\omega_0)\|_{L^2}=\|\xi_0\|_{L^2}\vee\|\omega_0\|_{L^2}$; moreover, given $\theta^N\in \ell^2$, we denote by
\begin{center}
	$\mathcal{C}_{\theta^N}\left(\xi_{0}, \omega_{0}\right)$= collection of laws of weak solutions $(\xi, \omega)$ to (\ref{stratonovichboussinesq}) with initial data $\left(\xi_{0}, \omega_{0}\right)$.
\end{center}
Recall the space $\mathcal{Z}$ defined in \eqref{mathcalz}, 
%\begin{equation}
%	\mathcal{Z}=\left(L^{2}\left(0, T, L^{2}\left(\mathbb{T}^{2}\right)\right) \cap C\left(0, T, H^{-\delta}\left(\mathbb{T}^{2}\right)\right)\right) \times C\left(0, T, H^{-\delta}\left(\mathbb{T}^{2}\right)\right)
%\end{equation}
endowed with the usual norm $\|\cdot\|_{\mathcal{Z}}$ of product space. Finally, we denote by $\Phi_{\cdot}\left(\xi_{0}, \omega_{0}\right)$ the unique solution to deterministic viscous Boussinesq equations (\ref{deterministicviscousBoussinesq}) with initial data $(\xi_0,\omega_0)\in(L^2(\mathbb{T}^{2}))^{2}$. We can prove the following result by repeating the arguments of Theorem \ref{SQGconvergenceinProb}: %(cf. Theorem 1.2 in \cite{LS}).
\begin{theorem}\label{BoussinesqconvergenceinProb}
	For any $R>0$ and any $\epsilon>0$, we have
	\begin{equation}
		\lim_{N \rightarrow \infty} \sup _{\left\|\left(\xi_{0}, \omega_{0}\right)\right\|_{L^{2}} \leq R} \sup_{Q \in \mathcal{C}_{\theta^N}\left(\xi_{0}, \omega_{0}\right)} Q\left(\left\{\varphi \in \mathcal{Z}:\left\|\varphi-\Phi _{\cdot}\left(\xi_{0}, \omega_{0}\right)\right\|_{\mathcal{Z}}>\varepsilon\right\}\right)=0.
	\end{equation}
\end{theorem}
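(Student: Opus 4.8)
The plan is to mimic the contradiction argument of Theorem \ref{SQGconvergenceinProb}, now carried out on the product space $\mathcal{Z}$ of \eqref{mathcalz} and using the a priori bounds of Theorem \ref{existenceBoussinesq} in place of those of Theorem \ref{main}. Suppose the claim fails. Then there exist $\varepsilon_0>0$, a subsequence of initial data $\{(\xi_0^{N_i},\omega_0^{N_i})\}_{i\geq1}$ with $\sup_i\|(\xi_0^{N_i},\omega_0^{N_i})\|_{L^2}\leq R$, and laws $Q^{N_i}\in\mathcal{C}_{\theta^{N_i}}(\xi_0^{N_i},\omega_0^{N_i})$ such that
\begin{equation*}
Q^{N_i}\left(\left\{\varphi\in\mathcal{Z}:\left\|\varphi-\Phi_\cdot(\xi_0^{N_i},\omega_0^{N_i})\right\|_{\mathcal{Z}}>\varepsilon_0\right\}\right)\geq\varepsilon_0,\quad i\geq1.
\end{equation*}
Since $\{(\xi_0^{N_i},\omega_0^{N_i})\}$ is bounded in $(L^2(\mathbb{T}^2))^2$, after passing to a further subsequence I may assume it converges weakly in $(L^2(\mathbb{T}^2))^2$ to some $(\xi_0,\omega_0)$.

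First I would let $(\xi^{N_i},\omega^{N_i})$ be weak solutions to \eqref{stratonovichboussinesq} with these initial data and law $Q^{N_i}$. The uniform estimates of Theorem \ref{existenceBoussinesq}, together with $\sup_i\|(\xi_0^{N_i},\omega_0^{N_i})\|_{L^2}\leq R$, yield bounds independent of $i$, so the tightness and Skorokhod arguments from the proof of Theorem \ref{scalingresultforBoussinesq} apply verbatim. Repeating that proof shows $(\xi^{N_i},\omega^{N_i})$ converges weakly on $\mathcal{Z}$ to the unique solution $\Phi_\cdot(\xi_0,\omega_0)$ of the deterministic system \eqref{deterministicviscousBoussinesq}. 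Because the limit is deterministic, weak convergence upgrades to convergence in probability, whence
\begin{equation*}
\lim_{i\to\infty}Q^{N_i}\left(\left\{\varphi\in\mathcal{Z}:\left\|\varphi-\Phi_\cdot(\xi_0,\omega_0)\right\|_{\mathcal{Z}}>\varepsilon_0\right\}\right)=0.
\end{equation*}

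To close the argument I would compare $\Phi_\cdot(\xi_0,\omega_0)$ with $\Phi_\cdot(\xi_0^{N_i},\omega_0^{N_i})$. Using the parabolic structure of \eqref{deterministicviscousBoussinesq} and the uniqueness established for the limit system in Subsection 4.3, the weak convergence of the initial data propagates to convergence of $\Phi_\cdot(\xi_0^{N_i},\omega_0^{N_i})$ to $\Phi_\cdot(\xi_0,\omega_0)$ in the topology of $\mathcal{Z}$. Combining this with the previous display via the triangle inequality forces $Q^{N_i}(\{\|\varphi-\Phi_\cdot(\xi_0^{N_i},\omega_0^{N_i})\|_{\mathcal{Z}}>\varepsilon_0\})\to0$, contradicting the lower bound $\varepsilon_0$ and completing the proof.

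The hard part will be the last step, namely the continuity of the deterministic solution map $(\xi_0,\omega_0)\mapsto\Phi_\cdot(\xi_0,\omega_0)$ from the weak topology of $(L^2)^2$ into the strong topology of $\mathcal{Z}$ — in particular the $L^2(0,T;L^2)$-convergence of the temperature component, where the coupling term $\partial_1\xi$ in the vorticity equation must be handled together with the nonlinearity $u\cdot\nabla\omega$. This relies on the commutator estimate \eqref{commutatorestimate} for the nonlinear term and on the parabolic smoothing furnished by the $(\kappa+\nu)\Delta\xi$ and $\nu\Delta\omega$ dissipation, exactly as in the proof of Theorem \ref{scalingresultforBoussinesq}; the $L^2(0,T;H^1)$ bound on $\xi$ then supplies, via assertion (ii) of Theorem \ref{embedding}, the compactness needed to pass from weak to strong convergence in the first component of $\mathcal{Z}$.
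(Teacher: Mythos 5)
Your proposal is correct and follows essentially the same route as the paper: the paper proves Theorem \ref{BoussinesqconvergenceinProb} by ``repeating the arguments of Theorem \ref{SQGconvergenceinProb}'', i.e.\ precisely the contradiction argument you describe --- extracting a weakly convergent subsequence of initial data, reusing the tightness/Skorokhod machinery of Theorem \ref{scalingresultforBoussinesq} to identify the deterministic limit $\Phi_\cdot(\xi_0,\omega_0)$, upgrading to convergence in probability, and closing with the stability of the deterministic solution map under weak convergence of initial data. You also correctly flag the continuity of $(\xi_0,\omega_0)\mapsto\Phi_\cdot(\xi_0,\omega_0)$ as the step requiring the most care, which the paper itself treats only briefly in the SQG case.
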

We remark that $Q_N$ in $\mathcal{C}_{\theta^N}\left(\xi_{0}, \omega_{0}\right)$ can be viewed as a measure on $\mathcal{Z}$ with distance $\rho_{\mathcal{Z}}(\cdot,\cdot)$ metrizing its weak convergence. Similarly to Remark \ref{distanceremark}, since Theorem \ref{BoussinesqconvergenceinProb} implies that 
\begin{equation}
	\sup _{Q_{N}, Q_{N}^{\prime} \in \mathcal{C}_{\theta^ N}\left(\xi_{0}, \omega_{0}\right)} \rho_{\mathcal{Z}}\left(Q_{N}, Q_{N}^{\prime}\right) \rightarrow 0 \quad \text { as } N \rightarrow \infty,
\end{equation} 
it is reasonable to say that the solution to \eqref{stratonovichboussinesq} are approximately unique.
\subsection{The Uniqueness of Solution to Viscous Boussinesq equations}
\begin{theorem}
	Let $T>0$ be arbitrary. Then for every $\left(\xi_{0}, \omega_{0}\right) \in\left(L^{2}\left(\mathbb{T}^{2}\right)\right)^{2}$, there exists a unique weak solution to
	\begin{equation}\label{viscoussystem}
		\left\{\begin{array}{l}
			\partial_{t} \xi+u \cdot \nabla \xi=(\kappa+\nu) \Delta \xi, \\
			\partial_{t} \omega+u \cdot \nabla \omega=\partial_{1} \xi+\nu \Delta \omega, \\
			u=K_0 * \omega,
		\end{array}\right.
	\end{equation}
	which satisfies
	\begin{equation*}
		\xi, \omega \in L^{\infty}\left(0, T ; L^{2}\left(\mathbb{T}^{2}\right)\right) \cap L^{2}\left(0, T ; H^{1}\left(\mathbb{T}^{2}\right)\right).
	\end{equation*}
	Moreover, there exists a constant $C_{T}>0$ such that the solution satisfies the bound
	\begin{equation*}
		\|\xi\|_{L^{\infty}\left(L^{2}\right)} \vee\|\xi\|_{L^{2}\left(H^{1}\right)} \vee\|\omega\|_{L^{\infty}\left(L^{2}\right)} \vee\|\omega\|_{L^{2}\left(H^{1}\right)} \leq C_{T}\left(\left\|\omega_{0}\right\|_{L^{2}}+\left\|\xi_{0}\right\|_{L^{2}}\right).
	\end{equation*}
\end{theorem}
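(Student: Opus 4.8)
The plan is to obtain existence together with the stated energy bound by a Galerkin scheme, and then prove uniqueness by a coupled energy estimate on the difference of two solutions. For existence I would run the deterministic Galerkin approximation exactly as in the proof of Theorem \ref{existenceBoussinesq}, simply dropping the noise. Testing the two equations against $\xi$ and $\omega$ and using that $u=K_0*\omega$ is divergence free (so the transport terms vanish), the $\xi$-equation yields $\frac12\frac{\d}{\d t}\|\xi\|_{L^2}^2+(\kappa+\nu)\|\nabla\xi\|_{L^2}^2=0$, hence the bounds on $\|\xi\|_{L^\infty(L^2)}$ and $\|\xi\|_{L^2(H^1)}$; the $\omega$-equation gives $\frac12\frac{\d}{\d t}\|\omega\|_{L^2}^2+\nu\|\nabla\omega\|_{L^2}^2=\langle\partial_1\xi,\omega\rangle\le\|\nabla\xi\|_{L^2}\|\omega\|_{L^2}$, and a Gronwall argument combined with the $\xi$-bound produces the bounds on $\|\omega\|_{L^\infty(L^2)}$ and $\|\omega\|_{L^2(H^1)}$. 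Passing to the limit by the compactness arguments of Section 4.1 gives a weak solution in $L^\infty(0,T;L^2)\cap L^2(0,T;H^1)$ satisfying the claimed estimate. Before invoking the identity \eqref{derivatieoftime} I would verify that $\xi,\omega\in\mathcal{H}$: writing $u\cdot\nabla\xi=\operatorname{div}(u\xi)$ and estimating $\|u\xi\|_{L^2}\le\|u\|_{L^4}\|\xi\|_{L^4}\le C\|\omega\|_{H^{1/2}}\|\xi\|_{H^{1/2}}$ via $H^{1/2}\hookrightarrow L^4$ and the SQG relation $\|u\|_{H^s}\sim\|\omega\|_{H^s}$, an interpolation then shows $u\cdot\nabla\xi,\,u\cdot\nabla\omega\in L^2(0,T;H^{-1})$, while the remaining linear terms clearly lie there as well, so $\partial_t\xi,\partial_t\omega\in L^2(0,T;H^{-1})$.

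For uniqueness, let $(\xi_A,\omega_A)$ and $(\xi_B,\omega_B)$ be two solutions with the same data, and set $\bar\xi=\xi_A-\xi_B$, $\bar\omega=\omega_A-\omega_B$, $\bar u=K_0*\bar\omega$. The differences solve $\partial_t\bar\xi+\bar u\cdot\nabla\xi_A+u_B\cdot\nabla\bar\xi=(\kappa+\nu)\Delta\bar\xi$ and $\partial_t\bar\omega+\bar u\cdot\nabla\omega_A+u_B\cdot\nabla\bar\omega=\partial_1\bar\xi+\nu\Delta\bar\omega$. Using \eqref{derivatieoftime} and the divergence-free property of $u_B$,
\[
\tfrac12\tfrac{\d}{\d t}\bigl(\|\bar\xi\|_{L^2}^2+\|\bar\omega\|_{L^2}^2\bigr)+(\kappa+\nu)\|\nabla\bar\xi\|_{L^2}^2+\nu\|\nabla\bar\omega\|_{L^2}^2=-\langle\bar u\cdot\nabla\xi_A,\bar\xi\rangle-\langle\bar u\cdot\nabla\omega_A,\bar\omega\rangle+\langle\partial_1\bar\xi,\bar\omega\rangle.
\]
I would bound the first two terms by $\|\bar u\|_{L^4}\|\nabla\xi_A\|_{L^2}\|\bar\xi\|_{L^4}$ and $\|\bar u\|_{L^4}\|\nabla\omega_A\|_{L^2}\|\bar\omega\|_{L^4}$, then use $H^{1/2}\hookrightarrow L^4$, the identity $\|\bar u\|_{H^s}\sim\|\bar\omega\|_{H^s}$, and the interpolation $\|f\|_{H^{1/2}}\le C\|f\|_{L^2}^{1/2}\|f\|_{H^1}^{1/2}$; a two-factor Young inequality absorbs the resulting $\|\nabla\bar\xi\|_{L^2}^2$ and $\|\nabla\bar\omega\|_{L^2}^2$ into the dissipation on the left. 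The coupling term is handled by $\langle\partial_1\bar\xi,\bar\omega\rangle\le\epsilon\|\nabla\bar\xi\|_{L^2}^2+C_\epsilon\|\bar\omega\|_{L^2}^2$. Choosing $\epsilon$ small relative to $\nu$ and $\kappa+\nu$ leaves $\frac{\d}{\d t}(\|\bar\xi\|_{L^2}^2+\|\bar\omega\|_{L^2}^2)\le g(t)(\|\bar\xi\|_{L^2}^2+\|\bar\omega\|_{L^2}^2)$ with $g(t)=C(1+\|\nabla\xi_A\|_{L^2}^2+\|\nabla\omega_A\|_{L^2}^2)\in L^1(0,T)$, since $\xi_A,\omega_A\in L^2(0,T;H^1)$. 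As $\bar\xi(0)=\bar\omega(0)=0$, Gronwall's inequality forces $\bar\xi=\bar\omega=0$, paralleling Theorem \ref{existenceanduniquenessforSQG}.

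The main obstacle is the critical regularity of the system: as stressed in the introduction, $u=K_0*\omega$ has the same regularity as $\omega$, so the nonlinear terms $\langle\bar u\cdot\nabla\xi_A,\bar\xi\rangle$ and $\langle\bar u\cdot\nabla\omega_A,\bar\omega\rangle$ resist any naive estimate. The point is that the factor carrying the derivative, $\nabla\xi_A$ (resp.\ $\nabla\omega_A$), is only $L^2$ in space but $L^2$ in time, while each difference factor can be measured in $H^{1/2}$ and interpolated between $L^2$ and $H^1$; the $H^1$ parts are then absorbed by the parabolic dissipation of \emph{both} components simultaneously, which is precisely why the coupled estimate closes. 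The delicate bookkeeping is to arrange the two-factor Young splitting so that both $\|\nabla\bar\xi\|_{L^2}^2$ and $\|\nabla\bar\omega\|_{L^2}^2$ are genuinely absorbed rather than reappearing with an uncontrolled constant; once this is done, the time-integrability of $g$ and Gronwall's inequality conclude the proof.
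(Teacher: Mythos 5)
Your proposal is correct and follows essentially the same route as the paper: Galerkin approximation with the standard energy identities for existence and the a priori bound, then uniqueness via the coupled $L^2$-energy estimate on $(\bar\xi,\bar\omega)$ using $H^{1/2}(\mathbb{T}^2)\hookrightarrow L^4(\mathbb{T}^2)$, the identification $\|\bar u\|_{H^{1/2}}\sim\|\bar\omega\|_{H^{1/2}}$, interpolation between $L^2$ and $H^1$, and Young's inequality to absorb the gradient terms into both dissipations before applying Gronwall. Your explicit verification that $\xi,\omega\in\mathcal{H}$ via $u\cdot\nabla\xi=\mathrm{div}(u\xi)$ is a useful detail the paper only asserts, but it does not change the argument.
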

When the kernel $K_0$ is replaced by a more regular one, a similar proof is given in the appendix of \cite{Luo20}. Without loss of generality, we assume $\kappa=\nu=1$ for simplicity.

We first give some classical a priori estimates. By the divergence-free property of $u$, the first equation in (\ref{viscoussystem}) yields
\begin{equation}\label{estimatexi}
	\left\|\xi(t)\right\|_{L^{2}}^{2}+4 \int_{0}^{t}\left\|\nabla \xi(s)\right\|_{L^{2}}^{2} \mathrm{d} s=\left\|\xi_{0}\right\|_{L^{2}}^{2}, \quad t \in[0, T].
\end{equation}
Using the second equation in (\ref{viscoussystem}) we have
\begin{equation}\label{estimatedomega}
	\frac{\mathrm{d}}{\mathrm{d} t}\|\omega\|_{L^{2}}^{2}=2\left\langle\omega, \Delta \omega+\partial_{1} \xi\right\rangle \leq-2\|\nabla \omega\|_{L^{2}}^{2}+2\|\omega\|_{L^{2}}\left\|\partial_{1} \xi\right\|_{L^{2}}\leq2\|\omega\|_{L^{2}}\|\nabla \xi\|_{L^{2}}.
\end{equation}
It yields that
\begin{equation}\label{estimateprioriomega}
	\begin{aligned}
		\left\|\omega(t)\right\|_{L^{2}} & \leq\left\|\omega_{0}\right\|_{L^{2}}+\int_{0}^{t}\left\|\nabla \xi(s)\right\|_{L^{2}} \mathrm{d} s \\
		& \leq\left\|\omega_{0}\right\|_{L^{2}}+\sqrt{T}\|\nabla \xi\|_{L^{2}\left(L^{2}\right)} \leq(1 \vee \sqrt{T})\left(\left\|\omega_{0}\right\|_{L^{2}}+\left\|\xi_{0}\right\|_{L^{2}}\right),
	\end{aligned}
\end{equation}
where the last step is due to (\ref{estimatexi}). From (\ref{estimatedomega}) we also obtain
\begin{equation*}
	\begin{aligned}
		\left\|\omega(t)\right\|_{L^{2}}^{2}+2 \int_{0}^{t}\left\|\nabla \omega(s)\right\|_{L^{2}}^{2} \mathrm{d} s & \leq\left\|\omega_{0}\right\|_{L^{2}}^{2}+2 \int_{0}^{t}\left\|\omega(s)\right\|_{L^{2}}\left\|\partial_{1} \xi(s)\right\|_{L^{2}} \mathrm{d} s \\
		& \leq\left\|\omega_{0}\right\|_{L^{2}}^{2}+2\|\omega\|_{L^{2}\left(L^{2}\right)}\|\nabla \xi\|_{L^{2}\left(L^{2}\right)}.
	\end{aligned}
\end{equation*}
Combining this estimate with (\ref{estimatexi}) and (\ref{estimateprioriomega}), we have
\begin{equation}\label{estimateomega}
	\left\|\omega(t)\right\|_{L^{2}}^{2}+2 \int_{0}^{t}\left\|\nabla \omega(s)\right\|_{L^{2}}^{2} \mathrm{d} s \leq C^{\prime}\left(\left\|\omega_{0}\right\|_{L^{2}}^{2}+\left\|\xi_{0}\right\|_{L^{2}}^{2}\right).
\end{equation}
Gathering a priori estimates (\ref{estimatexi}) and (\ref{estimateomega}), we have
\begin{equation}\label{estimateinall}
	\|\xi\|_{L^{\infty}\left(L^{2}\right)} \vee\|\xi\|_{L^{2}\left(H^{1}\right)} \vee\|\omega\|_{L^{\infty}\left(L^{2}\right)} \vee\|\omega\|_{L^{2}\left(H^{1}\right)} \leq C_{T}\left(\left\|\omega_{0}\right\|_{L^{2}}+\left\|\xi_{0}\right\|_{L^{2}}\right),
\end{equation}
with some $C_T>0$. By \eqref{estimateinall}, the existence of solutions to the equations (\ref{viscoussystem}) can be obtained using the Galerkin approximation methods.

Recall (\ref{H}) for the definition of $\mathcal{H}$. The equations (\ref{viscoussystem}) and the estimate \eqref{estimateinall} imply that $\xi, \omega \in \mathcal{H}$. Let $\left(\xi^{A}, \omega^{A}\right)$ and $\left(\xi^{B}, \omega^{B}\right)$ be two solutions to the equations (\ref{viscoussystem}) with the same initial data $\left(\xi_{0}, \omega_{0}\right) ;$ denote by $u^{A}$ and $u^{B}$ the corresponding velocities respectively. Let $\bar{\xi}=\xi^{A}-\xi^{B}$, $\bar{\omega}=\omega^{A}-\omega^{B}$ and $\bar{u}=u^{A}-u^{B}$, then
\begin{equation}\label{xiomegaAB1}
	\partial_{t} \bar{\xi}-2\Delta \bar{\xi} =-\left(\bar{u} \cdot \nabla \xi^{A}+u^{B} \cdot \nabla \bar{\xi}\right) ,
\end{equation}
and
\begin{equation}\label{xiomegaAB2}	
		\partial_{t} \bar{\omega}-\Delta \bar{\omega} =\partial_{1} \bar{\xi}-\left(\bar{u} \cdot \nabla \omega^{A}+u^{B} \cdot \nabla \bar{\omega}\right).
\end{equation}
We obtain from \eqref{xiomegaAB1} and (\ref{derivatieoftime}) that
\begin{equation}\label{uniqueness}
	\begin{aligned}
		\frac{1}{2}\frac{\d}{\d t}\left\|\bar{\xi}\right\|_{L^{2}}^{2}+2\left\|\nabla \bar{\xi}\right\|_{L^{2}}^{2} &=-\left[\left\langle\bar{u} \cdot \nabla \xi^{A}, \bar{\xi}\right\rangle+\left\langle u^{B} \cdot \nabla \bar{\xi}, \bar{\xi}\right\rangle\right] \\&=-\left\langle\bar{u} \cdot \nabla \xi^{A}, \bar{\xi}\right\rangle,
	\end{aligned}
\end{equation}
where we have used the fact that $u^{B}$ is divergence-free. Similarly,  (\ref{xiomegaAB2}) yields
\begin{equation}\label{uniquenessomega}
	\frac{1}{2}\frac{\d}{\d t}\left\|\bar{\omega}\right\|_{L^{2}}^{2}+\left\|\nabla \bar{\omega}\right\|_{L^{2}}^{2} =-\left\langle\bar{u} \cdot \nabla \omega^{A}, \bar{\omega}\right\rangle +\left\langle\partial_{1} \bar{\xi}, \bar{\omega}\right\rangle .
\end{equation}
%%开始修改

By H\"{o}lder's inequality and the Sobolev embedding $ H^{\frac{1}{2}}(\T^2) \hookrightarrow L^{4}(\T^2)$, the right-hand side of (\ref{uniqueness}) can be estimated as %with exponents $\frac{1}{2}+$ $\frac{1}{4}+\frac{1}{4}=1$
\begin{equation*}
	\begin{aligned}
	\left|\left\langle\bar{u} \cdot \nabla \xi^{A}, \bar{\xi}\right\rangle\right| \leq&\left\|\nabla \xi^{A}\right\|_{L^{2}}\left\|\bar{\xi}\right\|_{L^{4}}\left\|\bar{u}\right\|_{L^{4}} \\\leq& C\left\|\nabla\xi^{A}\right\|_{L^2}\left\|\bar{\xi}\right\|_{H^\frac{1}{2}}\left\|\bar{u}\right\|_{H^\frac{1}{2}}\\\leq& C\left\|\nabla\xi^{A}\right\|_{L^2}\left\|\bar{\xi}\right\|_{H^\frac{1}{2}}\left\|\bar{\omega}\right\|_{H^\frac{1}{2}},
	\end{aligned}
\end{equation*}
where the last inequality is due to the same regularity of $\bar{u}$ and $\bar{\omega}$. Using interpolation inequality, we have
\begin{equation*}
	\begin{aligned}
		\left|\left\langle\bar{u} \cdot \nabla \xi^{A}, \bar{\xi}\right\rangle\right| \leq& C\left\|\nabla\xi^{A}\right\|_{L^2}\left\|\bar{\omega}\right\|^{\frac{1}{2}}_{H^1}\left\|\bar{\omega}\right\|^{\frac{1}{2}}_{L^2}\left\|\bar{\xi}\right\|^{\frac{1}{2}}_{H^1}\left\|\bar{\xi}\right\|^{\frac{1}{2}}_{L^2}\\\leq& 
		C\left\|\nabla\bar{\omega}\right\|^{\frac{1}{2}}_{L^2}
		\left\|\nabla\bar{\xi}\right\|^{\frac{1}{2}}_{L^2}
			\left(\left\|\nabla\xi^{A}\right\|^{\frac{1}{2}}_{L^2}
	\left\|\bar{\omega}\right\|^{\frac{1}{2}}_{L^2}\right)
	\left(	\left\|\nabla\xi^{A}\right\|^{\frac{1}{2}}_{L^2}
			\left\|\bar{\xi}\right\|^{\frac{1}{2}}_{L^2} \right)
		\\\leq&\epsilon_{1}\left\|\nabla\bar{\omega}\right\|^{2}_{L^2}+
		\epsilon_{2}\left\|\nabla\bar{\xi}\right\|^{2}_{L^2}+C_{1}\left\|\nabla\xi^{A}\right\|^2_{L^2}\left\|\bar{\omega}\right\|^{2}_{L^2}
		+C_{2}\left\|\nabla\xi^{A}\right\|^2_{L^2}\left\|\bar{\xi}\right\|^{2}_{L^2},
	\end{aligned}
\end{equation*}
where $\epsilon_1$ and $\epsilon_{2}$ are small numbers will be determined later. Similarly to previous estimate \eqref{uomega1} and \eqref{uomega2}, for the right-hand side of (\ref{uniquenessomega}), we obtain
\begin{equation*}
	\left|\left\langle\bar{u} \cdot \nabla \omega^{A}, \bar{\omega}\right\rangle\right| \leq\epsilon_3\left\|\nabla \bar{\omega}\right\|_{L^{2}}^{2}+C_{3}\left\|\nabla\omega^{A}\right\|_{L^{2}}^{2}\left\|\bar{\omega}\right\|_{L^{2}}^{2},
\end{equation*}
and
\begin{equation*}
	\left|\left\langle\partial_{1} \bar{\xi}, \bar{\omega}\right\rangle\right| \leq\left\|\partial_{1} \bar{\xi}\right\|_{L^{2}}\left\|\bar{\omega}\right\|_{L^{2}} \leq\left\|\nabla \bar{\xi}\right\|_{L^{2}}\left\|\bar{\omega}\right\|_{L^{2}} \leq \epsilon_4\left\|\nabla \bar{\xi}\right\|_{L^{2}}^{2}+C_{4}\left\|\bar{\omega}\right\|_{L^{2}}^{2}.
\end{equation*}
\eqref{uniqueness} and \eqref{uniquenessomega} together yield that
\begin{equation*}
	\begin{aligned}
		&\frac{1}{2}\frac{\d}{\d t}\left\|\bar{\xi}\right\|_{L^{2}}^{2}+ \frac{1}{2}\frac{\d}{\d t}\left\|\bar{\omega}\right\|_{L^{2}}^{2}+2\left\|\nabla \bar{\xi}\right\|_{L^{2}}^{2} +\left\|\nabla \bar{\omega}\right\|_{L^{2}}^{2}
		\\\leq&\left|\left\langle\bar{u} \cdot \nabla \xi^{A}, \bar{\xi}\right\rangle\right|+	\left|\left\langle\bar{u} \cdot \nabla \omega^{A}, \bar{\omega}\right\rangle\right|+ \left|\left\langle\partial_{1} \bar{\xi}, \bar{\omega}\right\rangle\right|
		\\\leq&(\epsilon_2+\epsilon_4)\left\|\nabla \bar{\xi}\right\|_{L^{2}}^{2}+(\epsilon_1+\epsilon_3)\left\|\nabla \bar{\omega}\right\|_{L^{2}}^{2}\\&+\left(C_{1}\left\|\nabla\xi^{A}\right\|^2_{L^2}+C_{3}\left\|\nabla\omega^{A}\right\|_{L^{2}}^{2}+C_{4}\right)\left\|\bar{\omega}\right\|^{2}_{L^2}+C_{2}\left\|\nabla\xi^{A}\right\|^2_{L^2}\left\|\bar{\xi}\right\|^{2}_{L^2}.
	\end{aligned}
\end{equation*}
Take $\epsilon_2+\epsilon_4=2$ and $\epsilon_1+\epsilon_3=1$, we have
\begin{equation*}
	\begin{aligned}
		&\frac{1}{2}\frac{\d}{\d t}\left(\left\|\bar{\xi}\right\|_{L^{2}}^{2}+\left\|\bar{\omega}\right\|_{L^{2}}^{2}\right) 
		\leq C\left(\left\|\nabla\xi^{A}\right\|^2_{L^2}+\left\|\nabla\omega^{A}\right\|_{L^{2}}^{2}+1\right)\left(\left\|\bar{\xi}\right\|^{2}_{L^2}+\left\|\bar{\omega}\right\|^{2}_{L^2}\right).
	\end{aligned}
\end{equation*}
Since $\xi^A$ and $\omega^A$ are in $L^2(0,T;H^1)$, by Gronwall's
inequality, it implies that $\left\|\bar{\xi}\right\|_{L^{2}}^{2}+\left\|\bar{\omega}\right\|_{L^{2}}^{2}=0$ for all $t \in[0, T]$. Therefore, $\xi^{A}=\xi^{B}$ and $\omega^{A}=\omega^{B}$, and the uniqueness of solution holds.

\end{document}